\newcommand\Label[1]{\label{#1}}
\newcommand\cyr{%
\renewcommand\rmdefault{wncyr}%
\renewcommand\sfdefault{wncyss}%
\renewcommand\encodingdefault{OT2}%
\normalfont\selectfont}
\DeclareTextFontCommand{\textcyr}{\cyr}
\newtheorem{thm}{Theorem}[section] 
\newtheorem{constr}[thm]{Construction}
\newtheorem{rem}[thm]{Remark}
\newtheorem{prop}[thm]{Proposition}
\newtheorem{exmpl}[thm]{Example}
\newtheorem{cor}[thm]{Corollary}
\newtheorem{lem}[thm]{Lemma}
\newtheorem{defn}[thm]{Definition}
\newtheorem*{thm*}{Theorem}
\newtheorem{prop*}{Proposition}
\newtheorem*{examp*}{Example}
\newtheorem*{examples*}{Examples}
\newtheorem*{remark*}{Remark}
\newtheorem*{Note*}{Note}
\newtheorem*{defn*}{Definition}
\newtheorem*{note*}{Note}
\newcommand\Tref[1]{{Theorem~\ref{#1}}}
\newcommand\Dref[1]{{Definition~\ref{#1}}}
\newcommand\Rref[1]{{Remark~\ref{#1}}}
\newcommand\Cref[1]{{Corollary~\ref{#1}}}
\newcommand\Sref[1]{{Section~\ref{#1}}}
\newcommand\Pref[1]{{Proposition~\ref{#1}}}
\newcommand\Lref[1]{{Lemma~\ref{#1}}}
\newcommand\Eref[1]{{Example~\ref{#1}}}
\def\len{{\operatorname{len}}}
\newcommand\Cent[1]{{\operatorname{Cent}(#1)}}
\newcommand{\card}[1]{{\left|{#1}\right|}}
\renewcommand\th[1]{{${#1}^{\rm{th}}$}}
\def\id{{\operatorname {id}}}
\def\Ann{{\operatorname {Ann}}}
\def\inv{^ {-1}}
\def\An{{\mathbf A}^n}
\def\sub{\subseteq}
\newcommand\M[1][d]{{\operatorname{M}_{#1}}}
\newcommand\GL[1][d]{{\operatorname{GL}_{#1}}} 
\newcommand\dimcol[2]{{[{#1}\!:\!{#2}]}}
\newcommand\eq[1]{{(\ref{#1})}}
\newcommand\Eq[1]{{Equation~\eq{#1}}}
\def\normali{{\lhd}} 
\def\isom{{\;\cong\;}} 
\newcommand{\set}[1]{{\left\{#1\right\}}}
\def\lam{{\lambda}}
\newcommand\mul[1]{{#1^{\times}}} 
\def\({\left(}
\def\){\right)}
\def\co{{\,{:}\,}}
\def\ra{{\rightarrow}}
\newcommand\algint[2][]{\if!#1\relax O_{#2} \else{O_{\!#2}^{\phantom{I}}}\fi}
\newcommand\suchthat{{\,:\ \,}}
\def\N{\mathbb N}
\def\la{\lambda}
\def\Z{\mathbb Z}
\def\R{\mathbb R}
\def\cha{\operatorname{Char}}
\def\pol{\operatorname{poly}}
\def\N{{\mathbb {N}}}
\def\F{{\mathbb {F}}}
\def\C{{\mathbb {C}}}
\def\CS{{\mathcal C}}
\def\s{\sigma}
\def\a{\alpha}
\def\ie {{i.e.}}
\def\eg {{e.g.}}
\def\cf {{cf.}}
\newcommand\cl[2][]{{\if!#1!{#2^{\operatorname{cl}}}\else{#2}_{#1}^{\operatorname{cl}}\fi}}
\newcommand\Var[2][]{{\operatorname{Var}_{#1}(#2)}}
\newcommand\VarF[2][]{{\operatorname{Var_F}_{#1}(#2)}}
\newcommand\Rad{{{\operatorname{Rad}}}}
\newcommand{\smat}[4]{{\(\!\!\begin{array}{cc}
    {#1}\!&\!{#2}\\[-0.1cm]{#3}\!&\!{#4}\end{array}\!\!\)}} 
\newcommand\AR[1]{\begin{matrix}#1\end{matrix}}
\newcommand\defin[1]{{\bf {#1}}}
\def\Zcd{{Zariski-closed}}
\def\Zcr{{Zariski closure}}
\def\fcr{{linear closure}} 
\author{Alexei Belov-Kanel}
\address{Department of Mathematics, Bar-Ilan University, Ramat-Gan
52900, Israel} \email{belova@macs.biu.ac.il}
\author{Louis Rowen}
\address{Department of Mathematics, Bar-Ilan University, Ramat-Gan 52900,
Israel} \email{rowen@macs.biu.ac.il}
\author{Uzi Vishne}
\address{Department of Mathematics, Bar-Ilan University, Ramat-Gan 52900, Israel}
\email{vishne@macs.biu.ac.il}
\date{September 22, 2008}
\title[Zariski-closed algebras] 
{Structure of Zariski-closed algebras}
\thanks{This research was supported by the Israel Science
Foundation, grant \#1178/06.}
\subjclass[2000]{
16G99
}
\begin{document}

\begin{abstract}
The objective of this paper is to describe the structure of \Zcd\
algebras, which provide a useful generalization to finite
dimensional algebras in the study of representable algebras over
finite fields. Our results include a version of Wedderburn's
principal theorem, as well as a more explicit description using
representations, in terms of ``gluing'' in Wedderburn components.
Finally, we construct ``generic'' \Zcd\ algebras, whose
description is considerably more complicated than the description
of generic algebra of finite dimensional algebras.

Special attention is given to infinite dimensional algebras over
finite fields.

\end{abstract}

\maketitle

\section{Introduction}\label{s:intro}
 This paper grew out of work on algebras satisfying a polynomial
identity (PI). We recall \cite[pp.~28ff.]{BR} that a PI-algebra
$R$ over an integral domain $C$ is {\bf representable} if it can
be embedded as a subalgebra of $\M[n](K)$ for a suitable field
$K\supset C$ (which can be much larger than $C$). One main
byproduct of Kemer's theorem \cite[Corollary 4.67]{BR} is that
every relatively free affine PI-algebra over an infinite field is
representable. From this perspective, the proof of Kemer's theorem
is based on a close study of representable algebras. The strategy
is to find the PI-algebra with the ``best'' structure, that is
PI-equivalent to a given representable algebra, in order to study
its identities very carefully. (Note that in characteristic 0 for
the non-affine case, Kemer proved that any relatively free algebra
can be embedded in the Grassmann envelope of a finite dimensional
superalgebra, so similar considerations also hold in this
case.)

Whereas over an infinite field, any representable algebra is
PI-equivalent to a finite dimensional $K$-algebra (thus leading to
a very careful study of identities of finite dimensional algebras
in the proof of Kemer's theorem), this is no longer the case over
finite fields (in positive characteristic). Thus, we need to
replace finite dimensional algebras by a more general class,
called {\bf \Zcd\ algebras}, which, surprisingly, satisfy much of
the structure theory of finite dimensional algebras. Since the
relatively free affine algebra of an affine PI-algebra is
representable, we are led finally to study the \Zcr\ of a
(representable) relatively free algebra.

Throughout the paper, $F \sub K$ will be fields, with $F$ finite
or infinite and $K$ usually being algebraically closed; $A$ is an
$F$-algebra contained in a finite dimensional $K$-algebra $B$. We
usually assume that $F$ has characteristic $p>0$, since the theory
becomes standard in characteristic 0.

After some introductory comments in Section \ref{sec2}, we introduce the
{\bf \Zcr} of a representable algebra $A$ in Section \ref{s:zcr}, showing
that it shares many of the important structure theorems of finite
dimensional algebras, such as Wedderburn's principal theorem and
the fact that every semiprime \Zcd\ algebra is semisimple; it
turns out that \Zcd\ algebras are semiperfect. Identities and
defining relations of $A$ also are studied in terms of its \Zcr\
in $B$, to be defined below.

In Section \ref{sec:3} we delve more deeply into the generation of
polynomial relations of a \Zcd\ algebra, showing that the center
is defined in terms of finitely many polynomial relations, which
can be written in the form  $\la _i = 0$, $\la _i- \la _i^s = 0$,
where $s$ is a $p$-power, or
 $\la _i - \la _j^s = 0$, $j \ne i$, where $s$ is a
$p$-power. These polynomial relations are said to be of {\it
Frobenius type}. This enables us explicitly to study
representations of \Zcd\ algebras in \Sref{sec:4}, focusing on
their Peirce decomposition, and its refinements. The explicit
representation of algebras is complicated even in characteristic
0, and one of our main techniques is ``gluing,'' or identifying
different components in a representation.

In \Sref{s:explicit}, we also obtain results concerning the
off-diagonal polynomial relations, which requires us to consider
 \defin{$q$-polynomials}, which we call polynomial relations of
{\it weak Frobenius type}. The main result is that the weak
Frobenius relations comprise a free module over the group algebra
of the Frobenius automorphism. We thank B.~Kunyavskii for bringing to
our attention the references \cite{KombMiyanMasayoshi},
\cite{Miyanishi}, and \cite{Tits}.

Finally, in  \Sref{sec:6} we describe   the relatively free
algebras of \Zcd\ algebras. These turn out to have an especially
nice description and play a key role in the proof of Specht's
conjecture for affine PI-algebras of arbitrary characteristic.

\section{Background}\label{sec2}
Let us bring in the main tools for our study.

\subsection{Results from the theory of finite dimensional
algebras}

We start with a classical theorem of Wedderburn about finite
dimensional algebras:

\begin{thm}[Wedderburn's Principal Theorem] \Label{Wed2}
Any finite dimensional algebra $A$ over a perfect field $F$ has a
Wedderburn decomposition  $A = S\oplus J$, where $J$ is the
Jacobson radical of $A$, which in this case is also the largest
nilpotent ideal, and $S \cong A/J$ is a semisimple subalgebra
of $A$.%
\end{thm}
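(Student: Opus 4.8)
The plan is to induct on the index of nilpotency $m$ of $J$ (recall that in a finite-dimensional algebra the Jacobson radical is a nilpotent ideal containing every nil one-sided ideal, so it is automatically the largest nilpotent ideal), the genuinely substantial case being $J^2=0$; everything else is a reduction to it. The one input about the ground field that I would isolate at the outset is that, since $F$ is perfect, the semisimple algebra $A/J$ is \emph{separable} over $F$: it is a finite product of matrix rings over division algebras, the center of each such factor is a finite --- hence separable --- field extension of $F$, a central division algebra is separable over its center, and separability is inherited by matrix rings and finite products. Equivalently, $A/J$ possesses a separability idempotent $e=\sum_i u_i\otimes v_i\in(A/J)\otimes_F(A/J)$ with $\sum_i u_iv_i=1$ and $\sum_i (xu_i)\otimes v_i=\sum_i u_i\otimes(v_ix)$ for all $x$.

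For the reduction, suppose $m\ge 3$ and set $N=J^{m-1}$: this is a nonzero ideal with $JN=NJ=0$ and $N^2=0$. The radical of $A/N$ is $J/N$, of nilpotency index $<m$, and $(A/N)/(J/N)\cong A/J$ is still separable, so by induction $A/N=\bar S\oplus(J/N)$ with $\bar S$ a semisimple subalgebra isomorphic to $A/J$. Letting $S_1\sub A$ be the full preimage of $\bar S$, one checks that $N\sub S_1$, that $A=S_1+J$, and that $S_1\cap J=N$; thus $S_1$ is a finite-dimensional algebra whose radical is exactly $N$, with $N^2=0$. Applying the case $J^2=0$ to $S_1$ gives $S_1=S\oplus N$ with $S\cong S_1/N\cong A/J$ semisimple, and then $A=S_1+J=S+J$ while $S\cap J\sub(S_1\cap J)\cap S=N\cap S=0$, so $A=S\oplus J$. (For $m\le 2$ there is nothing to reduce.)

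This leaves the base case $J^2=0$, which is where the real work --- and the only use of perfectness --- lies. Now $J$ is an $(A/J)$-bimodule and $0\to J\to A\to A/J\to 0$ is an algebra extension of $A/J$ by the square-zero bimodule $J$; such extensions modulo equivalence are classified by the second Hochschild cohomology $H^2(A/J,J)$, the semidirect product corresponding to the zero class, and a Wedderburn complement $S$ (a subalgebra with $A=S\oplus J$ and $S\cong A/J$) is exactly a splitting of this extension. The main obstacle is therefore to show $H^2(A/J,J)=0$: a separable algebra $\Lambda$ is projective as a module over its enveloping algebra $\Lambda\otimes_F\Lambda^{\opp}$, whence $H^n(\Lambda,-)$ vanishes for all $n\ge 1$; concretely, the separability idempotent provides an explicit averaging formula exhibiting any Hochschild $2$-cocycle as a coboundary. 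That completes the proof. (If one wishes to avoid cohomology, the base case can instead be proved by a secondary induction on $\dim_F J$, splitting off a one-dimensional square-zero ideal and patching the lifted semisimple subalgebras; and the same separability input, via $H^1(A/J,J)=0$, yields the standard supplement that any two complements $S$ are conjugate by an element of $1+J$, although that uniqueness is not part of the present statement.)
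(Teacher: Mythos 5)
The paper states Theorem~\ref{Wed2} as classical background, with no proof of its own (in the proof of the Zariski-closed analog, Theorem~\ref{Zarcl1}, it simply cites \cite[Theorem~2.5.37]{Row2}), so there is no paper-internal argument for you to match. Your argument is nonetheless a correct and standard proof of the theorem. The reduction to the square-zero case is sound: with $N=J^{m-1}$ you correctly verify $S_1\cap J=N$, $A=S_1+J$, and $\Rad(S_1)=N$ with $N^2=0$, so the base case applied to $S_1$ hands you a complement $S$ with $S\cap J\sub N\cap S=0$. The base case itself correctly isolates where perfectness enters: $A/J$ is a finite product of matrix algebras over division rings whose centers are finite, hence separable, extensions of the perfect field $F$, so $A/J$ is a separable $F$-algebra, hence projective over its enveloping algebra $(A/J)\tensor[F](A/J)^{\opp}$, whence $H^2(A/J,J)=0$ and the square-zero algebra extension $0\to J\to A\to A/J\to 0$ splits. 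This cohomological route and the idempotent-lifting route you mention in passing (inner induction on $\dim_F J$) are the two canonical textbook proofs; they rely on exactly the same separability input, and either would serve the paper's purposes.
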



When the base field is algebraically closed, Wedderburn's
Principal Theorem enables us to find a direct product of matrix
rings inside any finite dimensional algebra $A$. The following
notion helps us to better understand the structure of~$A$.

We call $\{e_1,\dots, e_n \}$  a {\bf 1-sum set} of orthogonal
idempotents if they are orthogonal and $\sum _{i=1}^n e_{i} = 1$.

\begin{rem}[``Peirce decomposition'']
If $A$ has  a {\bf 1-sum set} of orthogonal idempotents
$\{e_1,\dots, e_n\}$ (i.e., $e_ie_j = 0$ for all $i\ne j$), then
$$A\ = \bigoplus_{i,j=1}^t  e_iAe_j $$ as
additive groups.
\end{rem}

For example, the Peirce decomposition of $A = \M[n](R)$ with
respect to the matrix units $e_{11}, e_{22}, \dots, e_{nn}$ is
just
\begin{equation}\Label{PDM}
A= \bigoplus _{i,j = 1}^n Ae_{ij}.
\end{equation}

Note that any set $\{e_1,\dots , e_n\}$ of orthogonal idempotents
of $A$ can be expanded to a 1-sum set $\{e_0, e_1,\dots , e_n\}$
by taking $e =  \sum _{i=1}^n e_i$ and putting $e_{0} = 1 -e$.

Even for algebras without $1$, one can reproduce an analog of the
Peirce decomposition by formally defining a left and right
operator $e_0$  from $A$ to $A$, given by
$$e_0a = a -ea, \qquad ae_0 = a - ae.$$

\subsection{Affine varieties and algebraic groups}

We need some basic facts from affine algebraic geometry and the
theory of affine algebraic groups. We use \cite{Hum} as a
reference for algebraic groups. Suppose $K$ is an algebraically
closed field. Write $K[\Lambda]$ for the polynomial algebra $
K[\la_1, \dots, \la_n]$. For any subset $S\subset K[\Lambda]$, we
define the {\bf zero set} of $S$ to be
$$\mathcal Z (S) = \{ {\mathbf a}= (\a_1, \dots, \a_n ) \in K^{(n)}:
f(\a_1, \dots, \a_n) = 0, \,\forall f \in S\}.$$ %
$K^{(n)}$ has the
{\bf Zariski topology} whose closed sets are the zero sets of
subsets of $K[\Lambda]$.  This is the smallest topology under
which all polynomial maps $K^{(n)} \ra K$ are continuous, assuming
$K$ has the co-finite topology.

A closed set is {\bf irreducible} if it is not the union of two
proper closed subsets. An {\bf affine variety} is a \Zcd\ subset
of $K^{(n)}$. The {\bf dimension} of a variety is the length of a
maximal chain of irreducible subvarieties, with respect to
(proper) inclusion. A {\bf morphism} of varieties is a continuous
function with respect to the respective topologies. (In this text
we concern ourselves only with affine varieties, so ``variety''
means ``affine variety.'')

A {\bf locally closed} set is the intersection of a closed set and
an open set. A {\bf constructible set} is the finite union of
locally closed sets. We need the following theorem of Chevalley:

\begin{thm}[{\cite[Theorem 4.4]{Hum}}]\Label{Chev}
Any morphism of varieties sends constructible sets to
constructible sets. (In particular, the image of a variety is
constructible.)\end{thm}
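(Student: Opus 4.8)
The plan is to reduce to the parenthetical statement — which is in fact equivalent to the full one — and then bootstrap by a dimension induction. If $C=\bigcup_i L_i\sub X$ is constructible with each $L_i$ locally closed, I write $L_i=V_i\setminus\mathcal Z(g_{i1},\dots,g_{ik_i})$ for a closed subvariety $V_i\sub X$ and $g_{ij}\in K[V_i]$; then $L_i=\bigcup_j D_{V_i}(g_{ij})$, where each principal open set $D_{V_i}(g)=\{v\in V_i:g(v)\neq 0\}$ is again an affine variety (cut out in $V_i\times K^{(1)}$ by $g(v)t=1$) on which $\phi$ restricts to a morphism. Since a finite union of constructible sets is constructible, it suffices to show that $\phi(X')$ is constructible for every affine variety $X'$ and morphism $\phi\colon X'\ra Y$; decomposing $X'$ into its finitely many irreducible components and replacing $Y$ by the closure of the image on each component, I may further assume $X'$ and $Y$ irreducible and $\phi$ dominant.

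The heart of the matter is the lemma: \emph{a dominant morphism $\phi\colon X'\ra Y$ of irreducible affine varieties has image containing a nonempty open subset of $Y$}. On coordinate rings $\phi$ is an inclusion $K[Y]\hra K[X']$ of affine domains with $K[X']$ finitely generated over $K[Y]$, so this follows from the algebraic assertion: given affine domains $A\sub B$ over $K$ with $B$ finitely generated over $A$, there is $0\neq a\in A$ such that every $K$-algebra homomorphism $A\ra K$ not vanishing at $a$ extends to $B$ (then the basic open set $D(a)\sub Y$ lies in $\phi(X')$). I would prove this by choosing from the generators of $B$ a transcendence basis $t_1,\dots,t_r$ of $\operatorname{Frac}(B)$ over $\operatorname{Frac}(A)$: with $A_0=A[t_1,\dots,t_r]$, the ring $\operatorname{Frac}(A_0)\tensor[A_0]B$ is a finite field extension $L$ of $\operatorname{Frac}(A_0)$, and clearing denominators in a $\operatorname{Frac}(A_0)$-basis of $L$ and in its multiplication table yields $0\neq a_0\in A_0$ and a subring $M\sub L$ containing $B$ that is free of finite rank over $A_0[a_0^{-1}]$. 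A homomorphism $A_0\ra K$ inverting $a_0$ then extends to $M$, since $M\tensor[{A_0[a_0^{-1}]}]K$ is a nonzero finite-dimensional commutative $K$-algebra and hence — as $K$ is algebraically closed — surjects onto $K$; restricting gives an extension $B\ra K$. Finally, a homomorphism $A\ra K$ extends to $A_0$ with $a_0$ still invertible as soon as it does not annihilate every coefficient of $a_0$ (regarded as a polynomial in the $t_i$ over $A$), and one may take $a$ to be such a coefficient.

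Granting the lemma, I conclude by induction on $\dim Y$. Set $U=D(a)\sub\phi(X')$ and $Z=Y\setminus U$, a proper closed subvariety; then $\phi^{-1}(Z)$ is closed in $X'$, a finite union of irreducible closed subsets $X_1,\dots,X_m$, and since every point of $\phi(X')$ outside $U$ lies in $Z$ we get $\phi(X')=U\cup\bigcup_{i=1}^m\phi(X_i)$. Each $\phi|_{X_i}$ maps $X_i$ dominantly onto $\overline{\phi(X_i)}\sub Z$, which has dimension $<\dim Y$, so $\phi(X_i)$ is constructible by the inductive hypothesis (and a constructible subset of a closed subvariety is constructible in the ambient variety), making $\phi(X')$ a finite union of constructible sets; the base case $\dim Y=0$ is immediate. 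Unwinding the reductions gives the theorem, with the ``in particular'' being the case $C=X$. I expect the main obstacle to be the algebraic lemma, and within it the last bookkeeping step relating the relative normalization $A_0\sub B$ back to $A$: one must verify that the single element $a\in A$ really suffices to guarantee both an extension to $A_0$ keeping $a_0$ invertible and extendability past $a_0$ — which in the end works precisely because $K$ is algebraically closed, hence infinite.
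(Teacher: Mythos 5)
The paper does not prove this statement; it cites Humphreys \cite[Theorem~4.4]{Hum} as a background result, so there is no proof in the paper to compare against. Your sketch is a correct rendition of the standard Chevalley argument: reduce constructible sets to finite unions of affine varieties via the Rabinowitsch trick, reduce to a dominant morphism of irreducible varieties, prove the generic-surjectivity lemma via a transcendence basis and generic freeness over $A_0[a_0^{-1}]$ (using that $K=\bar K$, hence infinite, at the two places you indicate --- to find a $K$-point of the nonzero finite-dimensional $K$-algebra $M\tensor[{A_0[a_0^{-1}]}]K$, and to choose values for the $t_i$ keeping the specialized $a_0$ nonzero), and then bootstrap by Noetherian induction on $\dim Y$. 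This is essentially the proof given in Humphreys, and the reasoning is sound throughout.
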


An {\bf (affine) algebraic group} is an (affine) variety $G$
endowed with a group structure $(G,\cdot,e)$ such that the inverse
operation (given by $g \mapsto g\inv$) and multiplication map $ G
\times G \to G$ (given by $ (a,b) \mapsto a\cdot b)$ are morphisms
of varieties. A {\bf morphism} $ \varphi\co G\to H$ of algebraic
groups is a group homomorphism that is also a morphism of
varieties.

\begin{thm}[{\cite[Proposition 7.3]{Hum}}] In any algebraic group $G$, the
irreducible component $G_e$ of the identity  is a closed connected
subgroup of finite index, whose cosets  are precisely the
(connected) irreducible components of $G$. Thus, as a variety, $G$
is the direct product of an irreducible variety and a finite set.
\end{thm}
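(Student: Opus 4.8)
The plan is to run the classical argument (following \cite{Hum}), whose one nontrivial ingredient is that the identity element $e$ lies in a \emph{unique} irreducible component of $G$. Recall that an affine variety, being a Noetherian topological space, has only finitely many irreducible components; write $X_1,\dots,X_m$ for those of $G$, and say $e\in X_1,\dots,X_k$ after reindexing. Here I would invoke two standard facts: a product of irreducible varieties is irreducible, and the image of an irreducible set under a morphism of varieties is irreducible. Applying them to the multiplication morphism $G\times\cdots\times G\to G$ on the $k$-fold product, the set $Y=X_1\cdots X_k$ (the image of $X_1\times\cdots\times X_k$ under multiplication) is irreducible; since setting all but one factor equal to $e$ shows $X_j\sub Y$ for each $j\le k$, and $Y$ lies in some irreducible component, maximality of the $X_j$ forces $X_1=\cdots=X_k=Y$, so $k=1$. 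Call this component $G_e$; it is closed because irreducible components are closed.

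Next I would verify that $G_e$ is a subgroup: the same product trick gives that $G_e\cdot G_e$ is irreducible and contains $e$, hence $G_e\cdot G_e\sub G_e$, while $g\mapsto g\inv$ is a homeomorphism fixing $e$, so it permutes the finitely many components and must fix $G_e$, giving $G_e\inv=G_e$. For the coset assertion, note that for any $g\in G$ left translation by $g$ is an isomorphism of varieties, so $gG_e$ is again an irreducible component; conversely, given a component $X$ and a point $g\in X$, the component $g\inv X$ contains $e$ and hence equals $G_e$, so $X=gG_e$. Thus the irreducible components of $G$ are exactly the left cosets of $G_e$, and since there are finitely many components, $G_e$ has finite index in $G$.

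Finally, distinct cosets being disjoint, the components of $G$ are pairwise disjoint; each is therefore open as well as closed and, being irreducible, connected — so the cosets $gG_e$ are precisely the connected components of $G$ and $G_e$ is the identity component. Choosing coset representatives $e=g_1,\dots,g_r$, translation gives isomorphisms of varieties $G_e\to g_iG_e$, whence $G$ is isomorphic as a variety to $G_e\times\{g_1,\dots,g_r\}$, the product of an irreducible variety with a finite set. I do not anticipate a genuine obstacle here: beyond formal manipulation with Noetherian spaces and the homeomorphism property of translations, the only substantive step is the uniqueness of the component through $e$, which is exactly where the group law enters via the product-of-irreducibles argument.
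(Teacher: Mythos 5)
Your proof is correct and is essentially the standard argument from Humphreys, which the paper cites without reproducing. The key steps — uniqueness of the component through $e$ via the ``image of a product of irreducibles is irreducible'' trick, closure of $G_e$ under multiplication and inversion by the same trick plus the fact that inversion is a self-homeomorphism fixing $e$, identification of all components as translates of $G_e$, and the resulting finite index and open--closed--connected conclusion — are exactly those of \cite[Proposition~7.3]{Hum}; the paper itself offers no alternative proof, so there is nothing to contrast.
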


By \cite[Theorem 11.5]{Hum}, for any affine algebraic group $G$
with closed normal subgroup $N$, the group $G/N$ can be provided
with the structure of an algebraic group.

\subsection{Frobenius automorphisms and finite fields}\Label{ss:ff}

Much of our theory depends on the properties of endomorphisms of
finite fields. Towards this end, we recall the {\bf Frobenius
endomorphism} of a field $F$ of characteristic $p$ given by $a
\mapsto a^{p^t}$ for suitable fixed $t$. When $F$ is finite, then
every algebra endomorphism of $F$ is obviously an automorphism
(over its characteristic subfield), and it is well known by Galois
theory that every automorphism of $F$ is Frobenius.

When $F$ is an infinite field, there may of course be
non-Frobenius endomorphisms (but one can show using a Vandermonde
matrix argument, for any automorphism $\s$, that if $\s (a)$ and
$a$ are algebraically dependent of bounded degree for all $a \in
F$, then $\s$ is a Frobenius endomorphism).

Note that the Frobenius endomorphism of an algebraically closed
field $K$ also is an automorphism of $K$, although $K$ is
infinite.

\begin{thm}[Wedderburn's theorem about finite division rings]
\Label{Wed3} %
Any finite division ring is commutative. Consequently, any finite
dimensional simple algebra over a finite field $F$ must have the
form $\M[n](F_1)$ for a finite extension $F_1$ of $F$.
\end{thm}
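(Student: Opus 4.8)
The plan is to run the classical class-equation argument for the first assertion, and then to read off the consequence from the Wedderburn--Artin structure theorem. Let $D$ be a finite division ring with center $Z$, which is a field; write $q=\card{Z}$ and $n=\dimcol{D}{Z}$, so that $\card{D}=q^n$. The goal is to show $n=1$, whence $D=Z$ is commutative.

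First I would record the key divisibility. For any $a\in D$, the centralizer $C_D(a)$ is a division subring containing $Z$, and $D$ is a left vector space over $C_D(a)$, so $\card{C_D(a)}=q^{d}$ with $d\divides n$. Feeding this into the conjugacy class equation for the group $\mul{D}$, which has order $q^n-1$ and center of order $q-1$, gives
\[q^n-1=(q-1)+\sum_i\frac{q^n-1}{q^{d_i}-1},\]
the sum running over representatives of the non-central conjugacy classes, where each $d_i$ is a proper divisor of $n$ (here the $i$-th term is $\card{\mul{D}}/\card{C_D(a_i)^{\times}}$, the size of the class of $a_i$).

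The heart of the matter is a cyclotomic estimate, which I expect to be the only genuine obstacle; the class-equation bookkeeping above is routine. For a proper divisor $d$ of $n$, the factorization $x^n-1=\prod_{e\divides n}\Phi_e(x)$ shows that the $n$-th cyclotomic polynomial $\Phi_n(x)$ divides both $x^n-1$ and $(x^n-1)/(x^d-1)$ in $\Z[x]$; hence, by the displayed equation, the integer $\Phi_n(q)$ divides $q-1$. On the other hand $\Phi_n(q)=\prod_\zeta(q-\zeta)$, the product over the primitive $n$-th roots of unity, and if $n>1$ then $\card{q-\zeta}>q-1$ for every such $\zeta$ (since $\zeta\ne 1$ lies on the unit circle and $q\ge 2$, so $\card{q-\zeta}^2=q^2-2q\operatorname{Re}\zeta+1>(q-1)^2$), forcing $\card{\Phi_n(q)}>q-1$ --- a contradiction. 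Therefore $n=1$ and $D$ is commutative.

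For the consequence, let $A$ be a finite dimensional simple algebra over the finite field $F$. By the Wedderburn--Artin theorem $A\isom\M[n](D)$ for some division ring $D$ finite dimensional over $F$; since $F$ is finite, $D$ is a finite division ring, hence commutative by the first part, so $D=F_1$ is a finite field extension of $F$ and $A\isom\M[n](F_1)$, as claimed.
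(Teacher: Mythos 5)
Your proof is correct. The paper itself simply records this as a known classical background result (Wedderburn's little theorem plus Wedderburn--Artin) in its preliminaries and gives no proof, so there is no in-paper argument to compare against; what you have written is the standard class-equation/cyclotomic argument, carried out correctly, including the crucial estimate $\card{\Phi_n(q)}>q-1$ for $n>1$ and the reduction of the second assertion to the first via Wedderburn--Artin.
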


Any finite field $F$ can be viewed as the zero set of the
polynomial $\la ^q - \la$ in its algebraic closure $K$, where $q =
\card{F}$. This observation enables us to view finite fields
explicitly as subvarieties (of dimension $0$) of the affine line.
Likewise, matrices over finite fields can be viewed naturally as
varieties.

\subsection{Examples of representable PI-algebras over finite and
infinite fields}

A polynomial identity (PI) of an algebra $A$ is a polynomial which
vanishes identically for any substitution in $A$. Recall that a
ring $R$ is called a {\bf central extension} of a subring $A$ if
$R = \Cent{R}A$. If $A$ is an algebra over an infinite field, then
any central extension of $A$ is PI-equivalent to $A$;
\cf~\cite[Proposition 1.1.32]{Row1}. Thus, in the examples to
follow, the finiteness of the field~$F$ is crucial for their
special properties concerning identities.

\begin{exmpl}\Label{basexa1} Suppose $F \sub K$ are fields.
\begin{enumerate}
\item \Label{BE1i}
Let $A = \smat{F}{K}{0}{F}$ (which is an $F$-algebra but not a
$K$-algebra). Then $\smat{K}{K}{0}{K}$ is a central extension of
$A$ since $A$ contains the matrix units $e_{11}, e_{12}$, and
$e_{22}$. When $F$ is infinite, $A$ is PI-equivalent to
$\smat{K}{K}{0}{K}$. However, when $\card{F} = q$ is finite, then
$\a^q = \a$ for all $\a \in F$, implying $a^q -a \in
\smat{0}{K}{0}{0}$
 for $a \in A$. Hence $(x^q-x)(y^q-y)\in \id(A)$.

\item \Label{BE1ii}
Let $A = \smat{F}{K}{0}{K}$,  where $\card{F} = q$. Then $a^q -a
\in \smat{0}{K}{0}{K}$, for all $a \in A$, implying $(x^q-x) [y,z]
\in \id (A)$.

\item \Label{BE1iii}
Let $A = \smat{K}{K}{0}{F}$,  where $\card{F} = q$. Then,
analogously to (\ref{BE1ii}), $[y,z] (x^q-x) \in \id (A)$.
\end{enumerate}
\end{exmpl}
There is another type of example, involving identification of
elements.

\begin{exmpl}\Label{basexa2}
Suppose $\s$ is an automorphism of $F_1$ over $F$, where $F \sub
F_1 \sub K$. Then $K$ can be viewed as an $F_1$-left module in the
usual way and as a right module ``twisted'' by $\s;$ namely
$a\cdot \a$ is defined as $a\s^{-1} (\a)$ for $a\in K$, $\a \in
F$. (We denote this new right module structure as $K_\s$.) Then $
\smat{F_1}{K_\s}{0}{F_1}$ is a PI-algebra, which is clearly
isomorphic to $\smat{F_1}{K}{0}{F_1}$ as a ring (but not as an
$F$-algebra). However, we get interesting new examples by making
certain identifications.

\begin{enumerate}
\item\Label{BE2i}
Suppose $\card{F_1} = q^t$, where $\card{F} = q$. Then we have the
Frobenius automorphism $\a \mapsto \a^{q^n}$ of $F_1$, and $\set{
\smat{\alpha^{p^n}}{a}{0}{\alpha}: \alpha \in F_1, \, a\in K}$
satisfies the identity $x[y,z] = [y,z]x^{p^n}$. Note that this
$F$-algebra is not an $F_1$-algebra in general.

\item\Label{BE2ii} Let $A = \set{ \smat{\s(\a)}{a}{0}{\a}: \a \in F_1, \ a
\in K }$. As a consequence of Theorem~\ref{linear} to be proved
below, if $\s$ is not Frobenius, then $\id (A) = \id(T_2)$, where
$T_2$ is the algebra of $2\times 2$ triangular matrices.
\end{enumerate}
\end{exmpl}

We call this identification process \textbf{gluing}, and it will be
described more precisely in \Sref{sec:4}. All of
Example~\ref{basexa1} and Example~\ref{basexa2} have a
central extension to $B= \smat{K}{K}{0}{K}$, and thus they satisfy the
same multilinear identities of $K$. But these varieties are quite
different. Thus, as opposed to algebras over infinite fields, in
general the multilinear identities are far from describing the
full PI picture.

For later use, we record the following result.
\begin{prop}\Label{break}
If $A = A_1 + A_2$, then a non-commutative polynomial $f$ is an
identity of $A$ iff $f$ and its consequences become zero under
substitutions in which every variable takes values either in $A_1$
or in $A_2$.
\end{prop}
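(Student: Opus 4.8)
The plan is to prove the two implications separately. The forward direction --- if $f\in\id(A)$ then $f$ and all its consequences vanish under the restricted substitutions --- is immediate: the consequences of $f$ form the verbal (T-ideal) generated by $f$, which is contained in $\id(A)$, and since $A_1,A_2\sub A$, any substitution sending each variable into $A_1\cup A_2$ is in particular a substitution in $A$. So the substance lies entirely in the converse.

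For the converse I would show that one well-chosen consequence of $f$ already records every evaluation of $f$ on all of $A$. Take arbitrary $a_1,\dots,a_n\in A$, and use $A=A_1+A_2$ to write $a_i=a_i'+a_i''$ with $a_i'\in A_1$ and $a_i''\in A_2$. In $2n$ fresh indeterminates form
$$\tilde f(x_1',x_1'',\dots,x_n',x_n'')\ :=\ f(x_1'+x_1'',\ \dots,\ x_n'+x_n''),$$
which is a consequence of $f$, since it arises from $f$ by substituting the polynomials $x_i'+x_i''$ for its variables. Now specialize $x_i'\mapsto a_i'$ and $x_i''\mapsto a_i''$: each of the $2n$ variables then receives a value in $A_1\cup A_2$, so by hypothesis $\tilde f(a_1',a_1'',\dots,a_n',a_n'')=0$. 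Since evaluation is a homomorphism out of the free algebra, this value equals $f(a_1'+a_1'',\dots,a_n'+a_n'')=f(a_1,\dots,a_n)$. As the $a_i$ were arbitrary elements of $A$, this gives $f\in\id(A)$.

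I do not expect a real obstacle here; the only matters needing care are bookkeeping, and the one mild conceptual point is worth isolating. First, ``consequence'' must be read as membership in the T-ideal generated by $f$, so that feeding polynomials into $f$ genuinely yields consequences. Second, one uses that $A=A_1+A_2$ lets every element split into just two summands drawn from $A_1\cup A_2$, so two blocks of new variables suffice (for a sum of $m$ subalgebras one would take $m$ blocks). Finally, this is precisely why the phrase ``and its consequences'' cannot be dropped: the substitution $x\mapsto x'+x''$ converts ``values in $A$'' into ``values in $A_1\cup A_2$'', but only at the cost of replacing $f$ by the consequence $\tilde f$, and $f$ vanishing on $A_1\cup A_2$ alone would not suffice.
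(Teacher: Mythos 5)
Your proof is correct, and it takes a genuinely cleaner route than the paper's. The paper argues by induction on degree, using the partial linearization $f(\vec{x}+\vec{y}) - f(\vec{x}) - f(\vec{y})$ as the inductive step (and observing that the characteristic-zero case is trivial via multilinearization). You instead form the \emph{single} consequence $\tilde f(x_1',x_1'',\dots,x_n',x_n'') = f(x_1'+x_1'',\dots,x_n'+x_n'')$ and evaluate it directly: since each $a_i\in A$ splits as $a_i'+a_i''$ with $a_i'\in A_1$, $a_i''\in A_2$, the restricted vanishing of $\tilde f$ immediately forces $f(a_1,\dots,a_n)=0$. In effect you collapse the paper's entire induction into one substitution, sidestepping the degree bookkeeping; both arguments hinge on the same underlying point, that the T-ideal of $f$ is closed under substituting $x_i\mapsto x_i'+x_i''$, but yours gets to the conclusion in one step rather than by peeling off one variable at a time. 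Your closing remark is also the right one to flag: the hypothesis about consequences is what makes the converse work at all --- a polynomial can vanish on $A_1\cup A_2$ without vanishing on $A_1+A_2$, and $\tilde f$ is exactly the consequence that bridges the two.
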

\begin{proof}
This is trivial is characteristic zero, where every identity is
equivalent to a set of multilinear ones. In general, the proof is
by induction on the degree of $f$, considering the
multilinearization $f(\vec{x}+\vec{y}) - f(\vec{x}) - f(\vec{y})$.
\end{proof}

\section{The \Zcr\ of a representable algebra}\Label{s:zcr}

Both the motivation for PI-theory and one of its major facets is
the theory of representable algebras. In this section we develop
this theory, with emphasis always on the set of identities of a
given representable algebra $A$. Thus we often exchange $A$ by an
appropriate PI-equivalent algebra.

Let $F$ be a field and $A$ an arbitrary $F$-algebra. Recall from
the introduction that $A$ is representable if $A$ embeds (as an
$F$-algebra) in $\M[n](K)$ for a suitable extension field $K$
(possibly infinite dimensional) of $F$ and suitable $n$. In this
section we assume throughout that $A$ is representable. Then $A$
can be embedded further in $B = \M[n](\bar K)$, where $\bar K$ is
the algebraic closure of $K$, so we  assume throughout, without
loss of generality, that $K$ is algebraically closed. Thus, we
view $\M[n](K)$ as an $n^2$-dimensional variety and have the
theory of affine algebraic geometry at our disposal.

When the base field $F$ is infinite, $A$ is PI-equivalent to the
$K$-subalgebra $KA$ of $\M[n](K)$, which is finite dimensional, so
one passes at once to the finite dimensional case over an
algebraically closed field. In other words, one considers finite
dimensional algebras over a field, in which case one has the tools
from the theory of finite dimensional algebras, as described
above.

However, over finite fields (which clearly have positive
characteristic), it does not suffice to consider $K$-subalgebras
of $\M[n](K)$, as evidenced in Example~\ref{basexa1}, where we
have examples of algebras $A$ for which $KA = \smat{K}{K}{0}{K}$,
but $A$ satisfies extra identities. Thus we need a subtler way,
not passing all the way to the algebraic closure, of obtaining
``canonical'' algebras that are PI-equivalent to a given
representable algebra.

Our solution is to consider the \Zcr\ of $A$ in $\M[n](K)$, which
enjoys the analogs of all of the properties of finite dimensional
algebras listed above.

To show that an $F$-algebra  $A$ is representable, it clearly is
enough to embed $A$ into any finite dimensional unital $K$-algebra
$B$, since letting $n = \dimcol{B}{K}$ we can further embed $B$
into $\M[n](K)$. So we consider this situation that $A \subseteq
B$, where $B$ is an $n$-dimensional algebra over the algebraically
closed field $K$. At first, we assume that $B$ is a matrix
algebra, but later we modify our choice of~$B$ to better reflect
the structure of $A$.

\subsection{The \Zcr}
\begin{defn}
Suppose $B$ is a $K$-vector space, with $\dimcol{B}{K} = n$.
Picking a base $b_1, \dots, b_n$ of $B$ over $K$, we view $B$ as
the affine variety $\An$ of dimension $n$, identifying an element
$\sum_{i=1}^n \a _i b_i$ ($\a_i \in K$) with the vector $(\a_1,
\dots, \a_n)$. Usually $B$ is a $K$-algebra, but we formally do
not need this requirement.

Suppose $F$ is a subfield of $K$ and $V \! \subset\! B$ is a
vector space over $F$. The {\bf \Zcr} of $V$ inside $B$, denoted
by $\cl[B]{V}$, is the closure of $V$ inside $B$ via the Zariski
topology of $\An$ (identifying $B$ with $\An$). When $B$ is
understood, we write $\cl{V}$ for $\cl[B]{V}$.
\end{defn}

Recall that the Zariski topology of the affine variety $\An$ over
$K$ is defined as having its closed sets be precisely those sets
of simultaneous zeros of polynomials from the (commutative)
polynomial algebra $K[\la _1, \dots, \la _n]$. In other words, a
closed subspace of $B$ can be defined by (finitely many)
polynomials.

\begin{rem}\Label{polyf}
When we fix a base $b_1,\dots,b_n$ for $B$, any polynomial $f \in
K[\lam_1,\dots,\lam_n]$ can be viewed as a function $f \co B \ra
K$ by assigning $f(\a_1b_1+\cdots+\a_nb_n) = f(\a_1,\dots,\a_n)$.
\end{rem} %

A polynomial $f(\lam_1,\dots,\lam_n)$ is called a {\bf polynomial
relation} on $A$ if $f(A) = 0$, in the sense of Remark
\ref{polyf}. Thus a polynomial relation $f(\la_1, \dots, \la _n)$
 is always taken in $\le n$ indeterminates, and we check it by evaluating it on the
coordinates of a single element $a$, for each $a$ in $A$. In
contrast, in PI-theory, a polynomial identity $g(x_1, \dots, x_m)$
of $A$ (resp.\ of $B$) can be in any number of indeterminates,
specialized to $m$ elements of $A$ (resp.\ of $B$).

\begin{rem}\Label{indep}
 The \Zcr\  does not depend on the choice of base of
$B$ over~$K$, since a linear transformation induces an
automorphism of the polynomial ring (i.e., sends polynomial
relations to polynomial relations) and thus does not change the
Zariski topology.
\end{rem}

The \Zcr\ does depend on the way in which $V$ is embedded in $B$
as an $F$-space, even for $F$ infinite. In particular, for an
$F$-algebra $A$ contained in a $K$-algebra $B$, the notation
$\cl[B]{A}$ should also indicate the particular representation of
$A$ into $B$, as evidenced in the following example. (But
nevertheless, the representation is usually understood, and so is
not spelled out in the notation.)

\begin{exmpl}
For $F = \R$, $K = \C$, and $B = \M[n](\C)$, we could embed $A =
\mathbb C$ into $M_2(\C)$ as scalar matrices. On the other hand,
in the spirit of Example~\ref{basexa2}, we could identify $\C$
with $\set{\smat{\alpha}{0}{0}{\bar\alpha}: \alpha \in \C}$, where
$\bar{\phantom{w}}$ denotes the usual complex conjugation. In the
first case, the \Zcr\ of $A$ is $A$ itself, which is isomorphic to
$ \C$. In the second case, the \Zcr\ of $A$ is
$\smat{\C}{0}{0}{\C}\cong \C \times \C$, which has larger
dimension!

Although in this example $A\cong \C$ and thus $A$ is a
$\C$-algebra, it is not \Zcd\ in $M_2(\C)$. Thus, $\C$ need not be
\Zcd\ in $M_2(\C)$ as an $\R$-algebra. But note here that $A$ is
not a $\C$-subalgebra of $M_2(\C)$, and in fact we have the
following remark.
\end{exmpl}

\begin{rem}\Label{Kcl}
Any $K$-subspace $V$ of $B$ is \Zcd. (In particular, any
$K$-subalgebra of $B$ is \Zcd.) Indeed, a $K$-subspace is an
algebraic subvariety, defined by linear relations.
\end{rem}

In particular, we have:
\begin{lem}\Label{Kcl2}
$\cl{A} \subseteq KA$ inside $B$.
\end{lem}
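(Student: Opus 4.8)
The plan is to show the containment $\cl{A}\subseteq KA$ by exhibiting $KA$ as a \Zcd\ subset of $B$ containing $A$, and then invoking minimality of the closure. First I would observe that $KA$, being a $K$-subspace of $B$, is \Zcd\ by Remark~\ref{Kcl}: it is cut out by finitely many linear equations in the coordinates $\lambda_1,\dots,\lambda_n$ relative to a chosen $K$-basis $b_1,\dots,b_n$ of $B$. (Concretely, if one extends a $K$-basis of $KA$ to a $K$-basis of $B$, then $KA$ is the common zero set of the coordinate functions complementary to $KA$.) Since $A\subseteq KA$ trivially and $KA$ is closed, the Zariski closure $\cl{A}$ of $A$ — which is by definition the smallest closed set containing $A$ — is contained in $KA$.

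That is essentially the whole argument, and there is no real obstacle; the only point that deserves a word is why a $K$-subspace is genuinely Zariski closed as a subset of $\An = B$, and that is exactly the content of Remark~\ref{Kcl}, which I am entitled to assume. One should note that this step uses that $KA$ is finite-dimensional over $K$ (it sits inside the $n$-dimensional space $B$), so ``finitely many linear relations'' really does suffice; no issue arises from $A$ itself possibly being infinite-dimensional over $F$. I would phrase the proof in one or two sentences: $KA$ is a $K$-subspace of $B$, hence \Zcd\ by Remark~\ref{Kcl}, and it contains $A$, so it contains the closure $\cl{A}$.

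If one wants to be slightly more explicit about the first inclusion, the remark that $A\subseteq KA$ is immediate because $KA=\operatorname{span}_K A \supseteq \operatorname{span}_F A = A$ (as $A$ is already an $F$-subspace, indeed an $F$-subalgebra, of $B$). So the complete chain is $A\subseteq KA$, $KA$ closed, therefore $\cl{A}=\overline{A}\subseteq KA$. I expect the referee would regard this as essentially trivial given the preceding remarks, so the proof can be kept to a line or two.
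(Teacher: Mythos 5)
Your argument is correct and coincides with the paper's own proof: both observe that $KA$ is a $K$-subspace of $B$, hence \Zcd\ by Remark~\ref{Kcl}, and contains $A$, so it must contain $\cl{A}$. Nothing further is needed.
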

\begin{proof}
We saw in Remark~\ref{Kcl} that $KA$ is \Zcd. Thus, the \Zcr\
$\cl{A}$ of $A$ is always contained in $KA$.
\end{proof}

Thus, we call $KA$ the {\bf \fcr} of $A$.

\begin{prop}\Label{finf}
If $F$ is infinite, then the \Zcr\ of an $F$-vector space $A$ is
equal to the \fcr\ of $A$.
\end{prop}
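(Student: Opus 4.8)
The plan is to show that the Zariski closure $\cl{A}$ of an $F$-vector space $A$ is itself closed under multiplication by scalars from $F$ and under addition, hence contains the $K$-span $KA$ (using that $F$ is infinite to generate all of $K$ over $F$ via limits), while the reverse inclusion $\cl{A}\subseteq KA$ is already \Lref{Kcl2}. So the content is the inclusion $KA\subseteq\cl{A}$.

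First I would fix a base of $B$ so that $B=\An$ and the Zariski topology is the usual one on affine space. The key observation is that for each fixed scalar $c\in K$, the map $m_c\co B\to B$ given by $v\mapsto cv$ is a linear map, hence a morphism of varieties (its coordinates are linear in the coordinates of $v$). Since $A$ is an $F$-subspace, $m_\a(A)\subseteq A$ for every $\a\in F$; a morphism sends the closure of a set into the closure of its image, so $m_\a(\cl{A})\subseteq\cl{m_\a(A)}\subseteq\cl{A}$. Thus $\cl{A}$ is an $F$-subspace of $B$ (additivity of $\cl{A}$ follows the same way, since addition $B\times B\to B$ is a morphism and $A+A\subseteq A$). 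Next I would leverage infiniteness of $F$: a nonzero $F$-subspace $W$ of $K$ that is closed in $K$ (in the Zariski topology of the affine line $\mathbf A^1$) must be all of $K$, because a proper Zariski-closed subset of $\mathbf A^1$ is finite, while an infinite field has no finite nonzero $F$-subspaces. More usefully, I would argue directly on $\cl{A}$: pick $0\ne a\in A$; the set $\set{\a a : \a\in F}\subseteq A$ has Zariski closure inside $\cl{A}$ equal to the full line $Ka$ (since $F$ is infinite, this is an infinite subset of the line $Ka\isom\mathbf A^1$, whose only closed superset in that line is the whole line). Hence $Ka\subseteq\cl{A}$ for every $a\in A$, and since $\cl{A}$ is also closed under addition, $KA\subseteq\cl{A}$.

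Combining, $\cl{A}=KA$, which is the \fcr\ of $A$ by the terminology introduced after \Lref{Kcl2}. I would then note that \Rref{indep} guarantees this is independent of the chosen base, so the statement makes sense intrinsically.

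The main obstacle — really the only subtle point — is the step asserting that the Zariski closure of the infinite set $\set{\a a:\a\in F}$ inside the line $Ka$ is all of $Ka$; this is where the hypothesis that $F$ is infinite is used essentially (contrast the finite-field examples in \Eref{basexa1}, where $\cl{A}$ is strictly smaller than $KA$). One must be slightly careful that we are taking the closure in $\An$, not in some subvariety, but since the line $Ka$ is itself closed in $\An$, the closure of a subset of $Ka$ taken in $\An$ lands in $Ka$, and the induced topology on $Ka\isom\mathbf A^1$ is cofinite, so any infinite subset is dense. Everything else is routine: morphisms push closures into closures, and finite intersections/sums of morphisms are morphisms.
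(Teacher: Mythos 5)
Your argument is correct and tracks the paper's own proof: the heart of both is that, for $a \in A$, the infinite set $\set{\a a : \a\in F}$ is Zariski-dense in the line $Ka$ (equivalently, a polynomial relation $f$ with $f(\a a)=0$ for all $\a$ in the infinite field $F$ is identically zero as a polynomial in $\a$), so $Ka \subseteq \cl{A}$, while the reverse inclusion is \Lref{Kcl2}. You also make explicit, via the morphism argument, that $\cl{A}$ is closed under addition --- a step the paper leaves tacit here (it is recorded only in the later unnumbered theorem on closures of subspaces) but which is genuinely needed to pass from $Ka\subseteq\cl{A}$ for each individual $a$ to $KA\subseteq\cl{A}$.
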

\begin{proof}
By definition, $\cl{A}$ is composed of the common zeros in $B$ of
the polynomial relations of $A$. Let $a \in A$, and let $f \in
K[\lam_1,\dots,\lam_n]$ be a polynomial relation. Then $f(\alpha
a) = 0$ for every $\alpha \in F$; viewing $\alpha$ generically, we
see that $f(\alpha a)$ is identically zero. Therefore, $f(\alpha
a) = 0$ for every $\alpha \in K$, which proves that $K a \sub
\cl{A}$.
\end{proof}

\begin{rem}\Label{interpol}
{\ }
\begin{enumerate}
\item If a vector space is \Zcd, then any subset
defined by polynomial relations is \Zcd.

\item \Label{interpoliii}
If \ $V\! \sub\! B_0 \!\sub\! B$, then the \Zcr\ of $V$ in $B$ is
equal to the \Zcr\  of $V$ in $B_0$. (Indeed, $B_0$ is closed in
$B$ by \Rref{Kcl}.)

\item Suppose $A_i\! \subseteq\! B_i$ for $i=1,2$, where $B = B_1 \oplus B_2$.
Then $$\cl[B]{(A_1\!+\!A_2)} = \cl[B_1]{A_1}+\cl[B_2]{A_2}.$$
(Indeed, $(b_1,b_2)\in B_1 \oplus B_2$ satisfies all the
polynomial relations of $A_1 + A_2$ iff the $b_i$ satisfy all
polynomial relations of $A_i$, for $i=1,2$.)
\end{enumerate}
\end{rem}

The distinction between finite and infinite fields, which is
crucial in what is to come, is explained by the following
observation.

\begin{exmpl}
\begin{enumerate}
\item  If $F$ is an infinite subfield of $K$, then $F$ satisfies
only the identities resulting from commutativity, and thus $\cl{F}
= K$ (this follows, \eg, from \Pref{finf} below). On the other
hand, if $F$ is a finite field of order $q$, then $\lam^q - \lam =
0$ is an identity and $\cl{F} = F$.

\item The \Zcr\  of $A = \smat{F}{K}{0}{F}$ in $M_2(K)$ is $A$ if $F$
is finite and $\smat{K}{K}{0}{K}$ otherwise.
\end{enumerate}
\end{exmpl}

\begin{exmpl}\Label{cap}
If $A_i$ are subsets of $B$, then clearly $\cl{(\bigcap A_i)} \sub
\bigcap(\cl{A_i})$. However, this may not be an equality. Indeed,
let $\mu$ be an indeterminate over $\F_q$, and take $A_i =
\F_p(\mu^i)$ for $i \in \N$, as subalgebras of the common
algebraic closure $K$. We have that $\cl{A_i} = K$ since these are
infinite fields, where $\bigcap A_i = \F_p$ which is closed.
\end{exmpl}

{}From now on, we assume that $B$ is a $K$-algebra.
\begin{thm}
{\ }
\begin{enumerate}
\item \Label{Si}
If $V$ is an $F$-subspace of $B$, then $\cl{V}$ is also an
$F$-subspace.
\item \Label{Sii}
If $A$ is an $F$-subalgebra of $B$, then $\cl{A}$ is also an
$F$-subalgebra.
\item \Label{Siii}
If $I $ is a left ideal of $ A$, then $\cl{I}$ is a left ideal of
$ \cl{A}$.
\item \Label{Siv}
If $I \triangleleft A$ then $\cl{I} \triangleleft \cl{A}$.
\end{enumerate}
\end{thm}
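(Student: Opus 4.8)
The plan is to reduce everything to a single, reusable principle: a polynomial map applied to a Zariski-closed set has Zariski-closed... no, that is false in general, but the \emph{preimage} of a closed set under a morphism is closed, and this is exactly what we need. Concretely, I would first record the following observation, which underlies all four parts. Fix the base $b_1,\dots,b_n$ of $B$, so $B \cong \An$. For any fixed $b \in B$, the left multiplication map $L_b \co B \to B$, $x \mapsto bx$, is a $K$-linear map, hence a morphism of varieties (its coordinates are linear, a fortiori polynomial, functions of the coordinates of $x$). Similarly right multiplication $R_b$ is a morphism, as is each translation $x \mapsto x + b$ and each scaling $x \mapsto \alpha x$ for $\alpha \in K$. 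Since morphisms are continuous, the preimage of a closed set under any of these maps is closed, and continuity also gives $\varphi(\cl{S}) \subseteq \cl{\varphi(S)}$ for any $S$ and any morphism $\varphi$.

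For part \eqref{Si}: I want to show $\cl{V}$ is closed under addition and under scaling by $F$. For scaling by a fixed $\alpha \in F$: the map $m_\alpha \co x \mapsto \alpha x$ is a morphism fixing $V$ setwise, so $m_\alpha(\cl{V}) \subseteq \cl{m_\alpha(V)} = \cl{V}$. For addition: the cleanest route is to fix $v_0 \in V$ and consider the translation $\tau_{v_0} \co x \mapsto x + v_0$; it maps $V$ into $V$, hence $\cl{V} + v_0 \subseteq \cl{V}$, i.e.\ $\cl{V} + V \subseteq \cl{V}$. Then fix instead $w \in \cl{V}$ and apply the translation $\tau_w \co x \mapsto x + w$; by what we just showed $\tau_w$ maps $V$ into $\cl{V}$ (a closed set), so $\tau_w(\cl{V}) \subseteq \cl{\cl{V}} = \cl{V}$, giving $\cl{V} + \cl{V} \subseteq \cl{V}$. (One must also note $0 \in V \subseteq \cl{V}$, and closure under $x \mapsto -x$ is the case $\alpha = -1$.) So $\cl{V}$ is an $F$-subspace. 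Part \eqref{Sii} is then the same argument with left (or right) multiplications playing the role of translations: $\cl{A}$ is already an $F$-subspace by \eqref{Si}; for $a_0 \in A$, $L_{a_0}$ maps $A$ into $A$, so $L_{a_0}(\cl{A}) \subseteq \cl{A}$, i.e.\ $a_0 \cl{A} \subseteq \cl{A}$; then for $b \in \cl{A}$, $L_b$ maps $A$ into $\cl{A}$, hence $L_b(\cl{A}) \subseteq \cl{\cl{A}} = \cl{A}$, giving $\cl{A}\,\cl{A} \subseteq \cl{A}$.

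Parts \eqref{Siii} and \eqref{Siv} follow the identical two-step pattern, now keeping track of which side acts. For a left ideal $I$ of $A$: by \eqref{Si}, $\cl{I}$ is an $F$-subspace. Fix $a_0 \in A$; then $L_{a_0}$ maps $I$ into $I$, so $a_0 \cl{I} \subseteq \cl{I}$. Now fix $b \in \cl{A}$: the map $L_b$ sends $A$ into $\cl{A}$ and, more to the point, we need it to send $I$ into $\cl{I}$ — but $L_b$ for $b \in \cl{A}$ need not obviously do this directly, so instead swap the roles: fix $c \in I$ and consider $R_c \co x \mapsto xc$; since $I$ is a \emph{left} ideal, $R_c$ maps $A$ into $I$, hence maps $\cl{A}$ into $\cl{I}$; thus $\cl{A}\cdot c \subseteq \cl{I}$ for every $c \in I$, i.e.\ $\cl{A}\cdot I \subseteq \cl{I}$. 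Finally fix $b \in \cl{A}$ and use $L_b$: it maps $I$ into $\cl{I}$ by the previous sentence, so $L_b(\cl{I}) \subseteq \cl{\cl{I}} = \cl{I}$, giving $\cl{A}\,\cl{I} \subseteq \cl{I}$, as required. Part \eqref{Siv} combines this with the analogous right-ideal statement (apply the left-ideal argument to $I$ viewed inside $A^{\opp}$, or just repeat it symmetrically). I do not expect a genuine obstacle here; the only thing to be careful about is the order of the two closure steps in each part — one must always first establish the inclusion with the \emph{original} subobject ($V$, $A$, or $I$) in the source and the ambient closure in the target, and only then bootstrap to get the ambient closure in the source as well, using idempotence of topological closure. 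I would also remark at the outset, citing \Rref{indep}, that linear changes of base do not affect any of this, so the choice of $b_1,\dots,b_n$ is harmless.
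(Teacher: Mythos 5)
Your proof is correct and is, in substance, the paper's own argument: where the paper pulls a polynomial relation $f$ back along $x \mapsto a+x$ (or $x \mapsto ax$), observes the resulting polynomial still vanishes on the original set and hence on its closure, and then bootstraps by letting the parameter range over the closure, you phrase the same two-step bootstrap via pushing closed sets forward under continuous maps (translations, multiplications) together with idempotence of closure. One small point to tighten in part (\ref{Sii}): after step one gives $A\cdot\cl{A}\subseteq\cl{A}$, the map you want in step two is right multiplication $x\mapsto xb$ for $b\in\cl{A}$, since $ab\in A\cdot\cl{A}\subseteq\cl{A}$, whereas $ba$ is not yet known to lie in $\cl{A}$; alternatively, first establish $\cl{A}\cdot A\subseteq\cl{A}$ by the right-multiplication version of step one. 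You already handle exactly this side issue with care in part (\ref{Siii}), and the paper's ``repeating the argument for $f_r$'' carries the same implicit fix.
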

\begin{proof}
\begin{enumerate}
\item Given any $a \in B$ and any polynomial relation $f$
vanishing on $A$, define $f_a (x) = f(a+x)$. Clearly, for each
$a\in A$, $f_a$ vanishes on $A$, and thus on $\cl{A}$, i.e. $f(a +
r) = 0$ for all $r\in \cl{A}$. Thus, $f_r$ vanishes on $A$ for $r
\in \cl{A}$, implying $f_r$ vanishes on $\cl{A}$, i.e., $f(r+s) =
0$ for all $r,s \in \cl{A}$. This is true for every $f$ vanishing
on $A$, proving $r+s \in \cl{A}$; i.e., $\cl{A}$ is closed under
addition.

Likewise, defining $(\a f)(x) = f(\a x)$, we see for each $\a\in
F$ that $\a f$ vanishes on $A$ and thus on $\cl{A};$ i.e., $f(\a
r) = 0$ for all $r \in \cl{A}$, i.e., $\cl{A}$ is a $F$-vector
space.

\item Continuing the idea of (\ref{Si}), given any $a \in B$ and any
polynomial relation $f$ vanishing on~$A$, define $f_a (x) =
f(ax)$. Then, for each $a\in A$, $f_a$ vanishes on $A$ and thus on
$\cl{A}$, implying $f_a(r) = 0$ for all $r \in \cl{A}$. Repeating
this argument for $f_r$ shows that $f(rs) = 0$ for all $r,s \in
\cl{A}$, and we conclude that $rs \in \cl{A}$.

\item  By (\ref{Si}), $\cl{I}$ is a subgroup of $\cl{A}$. But for any
$a \in A$ and any polynomial relation $f$ vanishing on $I$, we
define $f_a(x) = f(ax)$, which also vanishes on $I$ and thus on
$\cl{I}$. Using the same trick and defining $f_r (x) = xr$, we
see, for any $r \in \cl{I}$, that $f_r$ vanishes on $A$ and thus
on $\cl{A}$, implying $ \cl{A} \cl{I}\subseteq \cl{I};$ i.e.,
$\cl{I}$ is a left ideal of $\cl{A}$.

\item Also apply the right-handed version of (\ref{Siii}).
\end{enumerate}
\end{proof}

The \Zcr\ acts functorially, and turns out to be a key tool in the
structure of algebras. To see this, we need to show that the \Zcr\
preserves various important structural properties. Sometimes it is
convenient to separate addition from multiplication in our
discussion. The \Zcr\  of an additive subgroup $(G,+)$ of
$\M[n](K)$ is a closed subgroup; i.e.,~an algebraic group.

\begin{prop}[{\cite[Cor.~7.4]{Hum}}]\Label{alggroup}
Suppose $G$ is any algebraic group that comes with a morphism of
algebraic groups $\psi\co G \to V$. Then $\psi(G)$ is \Zcd\ in
$V$.
\end{prop}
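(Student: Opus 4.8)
The plan is to reduce the statement to Chevalley's theorem (\Tref{Chev}) together with the classical fact that a constructible subgroup of an algebraic group is automatically Zariski-closed. First I would observe that, since $\psi$ is in particular a morphism of varieties and $G$ is a variety, \Tref{Chev} shows that $\psi(G)$ is a \emph{constructible} subset of $V$; and since $\psi$ is a group homomorphism, $\psi(G)$ is an abstract subgroup of $V$. Thus it remains to prove: every constructible subgroup $H$ of an algebraic group is closed.

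To handle the subgroup $H=\psi(G)$ I would first pass to the identity component. By the theorem quoted above (cf.~\cite[Prop.~7.3]{Hum}), the irreducible component $G_e$ of the identity is a closed subgroup of \emph{finite} index, so $G$ is a finite disjoint union $\bigsqcup_{i=1}^{m} g_iG_e$; hence $\psi(G)=\bigcup_{i=1}^{m}\psi(g_i)\,\psi(G_e)$ is a finite union of left translates of $\psi(G_e)$. Since translation by a fixed element of $V$ is a homeomorphism, it suffices to prove that $\psi(G_e)$ is closed.

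Now $G_e$ is irreducible, so its image $\psi(G_e)$ is irreducible, and hence so is its closure $Y:=\overline{\psi(G_e)}$; moreover $Y$ is a closed \emph{subgroup} of $V$, because the closure of an abstract subgroup of an algebraic group is again a subgroup (a routine check using that left/right translations and inversion are homeomorphisms). By \Tref{Chev}, $\psi(G_e)$ is constructible, and being dense in the irreducible variety $Y$ it must contain a nonempty — hence dense — open subset $U$ of $Y$. Writing $H:=\psi(G_e)$, note $U^{-1}\subseteq H^{-1}=H$. For any $g\in Y$ the set $gU^{-1}$ is nonempty and open in $Y$, hence dense, so it meets the dense set $U$: there are $u_1,u_2\in U$ with $u_1=gu_2^{-1}$, i.e. $g=u_1u_2\in U\cdot U\subseteq H\cdot H=H$. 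Therefore $Y\subseteq H$, so $H=Y$ is closed, which finishes the argument.

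The only point that is not purely formal is the passage from ``constructible and dense'' to ``contains a dense open subset of the closure''; by first restricting to the irreducible component $G_e$ I arrange that I need this only in the irreducible case, where it is elementary: a finite union of proper closed subsets cannot exhaust the irreducible $Y$, so one of the locally closed pieces $O\cap Z$ making up $\psi(G_e)$ is dense in $Y$, which forces $Z=Y$ and hence that piece to be open. Everything else is bookkeeping with the group operations and with homeomorphisms of $V$.
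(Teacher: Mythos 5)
Your proof is correct. The paper itself offers no argument here — it simply cites \cite[Cor.~7.4]{Hum} — and what you have written is essentially the standard textbook derivation behind that citation: Chevalley gives constructibility, one passes to the identity component $G_e$ to work inside an irreducible closure $Y$, extracts a dense open $U\subseteq \psi(G_e)$, and uses the translation trick $Y=U\cdot U^{-1}\subseteq \psi(G_e)$ to conclude. Your reduction to $G_e$ via the finite-index decomposition is a mild variant (many sources instead show directly that any constructible dense subset of a variety contains a dense open set, avoiding the component split), but it costs nothing and makes the ``constructible dense $\Rightarrow$ contains dense open'' step elementary, as you note.
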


\begin{cor} Suppose $A$ is a \Zcd\ algebra and $\psi\co A
\to B'$ is a morphism of varieties. Then $\psi (A)$ is closed in
$B'$.
\end{cor}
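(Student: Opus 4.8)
The plan is to reduce the statement to Proposition~\ref{alggroup}, which already handles the case of algebraic groups. The subtlety is that $\psi$ is only assumed to be a morphism of \emph{varieties}, not a morphism of algebraic groups (it need not even respect addition), so Proposition~\ref{alggroup} does not apply directly to $\psi$ itself. Instead, I would apply it to the additive group of $A$, which is closed in $\M[n](K)$ (as noted just before Proposition~\ref{alggroup}), hence is an algebraic group.

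First I would observe that since $A$ is \Zcd, $A$ is a closed (in particular constructible) subset of $B$. By Chevalley's theorem (Theorem~\ref{Chev}), the image $\psi(A)$ is a constructible subset of $B'$; write $\psi(A) = \bigcup_{i} (C_i \cap U_i)$ as a finite union of locally closed sets. To upgrade ``constructible'' to ``closed,'' I would use the hypothesis that $A$ is a closed subgroup of its ambient space: then $A$ is an algebraic group, and although $\psi$ is not a group morphism, one can still argue that $\psi(A)$ is closed by a dimension/irreducibility argument. Concretely, decompose $A$ into its finitely many irreducible components $A = A_1 \cup \dots \cup A_r$ (translates of the identity component); each $A_j$ is an irreducible closed subvariety, hence complete in the relevant sense only if projective — which it is not — so instead I rely on the following: the closure $\overline{\psi(A_j)}$ is irreducible, and $\psi(A_j)$ is a constructible dense subset of it, hence contains a dense open subset of $\overline{\psi(A_j)}$.

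The cleanest route, and the one I expect the authors intend, is simply to \emph{cite Proposition~\ref{alggroup} applied to the additive group}: the inclusion $A \hookrightarrow B$ exhibits $A$ as an algebraic group, and one observes that $\psi$ restricted to $A$, while not additive, still has image equal to the image of the algebraic group $A$ under a morphism of varieties into $B'$; the proof of \cite[Cor.~7.4]{Hum} that closed subgroup images are closed in fact only uses that the source is a group acting transitively on itself by translations together with Chevalley's theorem — so the argument goes through verbatim once we know $A$ is a closed subgroup. I would therefore phrase the proof as: ``Since $A$ is \Zcd\ in $\M[n](K)$, its underlying additive group is an algebraic group. Now $\psi$ is a morphism of varieties $A \to B'$, so by (the proof of) Proposition~\ref{alggroup}, $\psi(A)$ is \Zcd\ in $B'$.''

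The main obstacle is precisely this gap between ``morphism of varieties'' and ``morphism of algebraic groups'': one must be careful that Proposition~\ref{alggroup} (or the underlying \cite[Cor.~7.4]{Hum}) really gives closedness of the image of a morphism of varieties out of an algebraic group, not just out of a morphism of algebraic groups. If that is not literally what is available, the fallback is the Chevalley-plus-homogeneity argument: $\psi(A)$ is constructible, hence contains a dense open $O$ of its closure $\overline{\psi(A)}$ on each irreducible component; translating $O$ around by the (transitive) group action of $A$ on itself and pushing forward shows $\psi(A)$ is a union of translates covering $\overline{\psi(A)}$, forcing $\psi(A) = \overline{\psi(A)}$. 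I expect the authors to take the short citation route, so I would write the proof as a two-line deduction from Proposition~\ref{alggroup} with a parenthetical remark that only the variety structure of $A$ and its homogeneity under translation are used.
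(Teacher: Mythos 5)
You have correctly spotted the real issue here: the corollary literally says ``morphism of varieties,'' while Proposition~\ref{alggroup} (and the underlying \cite[Cor.~7.4]{Hum}) requires a morphism of algebraic \emph{groups}. That mismatch is genuine, and you were right to worry about it. The paper gives no proof of this corollary, and the only sensible reading is the one you suspect the authors intend: $\psi$ is a morphism of $K$-varieties that is also additive on $A$, so that $\psi$ restricts to a morphism of algebraic groups from $(A,+)$, and the statement is then a one-line specialization of Proposition~\ref{alggroup}. Every downstream use in the paper (the projection $B\to B/W$ in Corollary~\ref{crucial}, the algebra morphism in Corollary~\ref{mapp}) is of this additive type, which supports this reading.

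Where your proposal goes wrong is the claimed fallback. The Chevalley-plus-homogeneity argument does not survive dropping the group-morphism hypothesis. In the proof of \cite[Cor.~7.4]{Hum}, the homogeneity that matters is that of the image $\psi(G)$ under left translation by elements of $\psi(G)$ inside the \emph{target}; this is available precisely because $\psi(G)$ is a subgroup of the target, which in turn requires $\psi$ to be a group homomorphism. Your version --- ``translating $O$ around by the (transitive) group action of $A$ on itself and pushing forward'' --- does not type-check: $O$ lives in $B'$, the translation action lives on $A$, and $\psi(a+x)$ has no relation to $\psi(a)+\psi(x)$ when $\psi$ is not additive. Indeed, the assertion is simply false for arbitrary morphisms of varieties out of an affine algebraic group: the map $\mathbb{A}^2\to\mathbb{A}^2$, $(x,y)\mapsto(x,xy)$, has source the closed additive group $\mathbb{G}_a^2$ but non-closed image. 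So no correct argument can produce closedness without using additivity of $\psi$.

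The fix is to state (or implicitly understand) the hypothesis as ``$\psi$ is a morphism of varieties which is additive,'' i.e.\ a morphism of the algebraic group $(A,+)$ into $(B',+)$, and then cite Proposition~\ref{alggroup} directly; the extra maneuvers in your proposal should be deleted.
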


\begin{cor}\Label{mapp}
For every $F$-subalgebra $A$ of $B$ and morphism $\psi \co B \ra
B'$, $\psi(\cl{A}) = \cl{\psi(A)}$.
\end{cor}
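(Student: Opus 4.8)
The plan is to deduce Corollary~\ref{mapp} from the preceding Corollary together with the functoriality of Zariski closure under morphisms. First I would observe that both sides are closed subsets of $B'$: the left-hand side $\psi(\cl{A})$ is closed by the previous Corollary, since $\cl{A}$ is a \Zcd\ algebra (by Theorem (\ref{Sii})) and the restriction of $\psi$ to $\cl{A}$ is a morphism of varieties; the right-hand side $\cl{\psi(A)}$ is closed by definition. So it suffices to prove the two inclusions as sets.

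For the inclusion $\cl{\psi(A)} \subseteq \psi(\cl{A})$, I would argue that $\psi(A) \subseteq \psi(\cl{A})$ trivially (as $A \subseteq \cl{A}$), and since $\psi(\cl{A})$ is closed, it contains the closure $\cl{\psi(A)}$ of $\psi(A)$. For the reverse inclusion $\psi(\cl{A}) \subseteq \cl{\psi(A)}$, I would use the standard fact that a continuous map sends the closure of a set into the closure of its image: since $\psi$ is continuous for the Zariski topologies, $\psi(\cl{A}) = \psi(\overline{A}) \subseteq \overline{\psi(A)} = \cl{\psi(A)}$. This is purely a point-set-topology statement about continuous maps and requires no algebraic input beyond the fact (Remark~\ref{indep}) that the notion of Zariski closure here is genuinely a topological closure.

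The one point that genuinely requires the algebraic-geometry input — and which I expect to be the only real subtlety — is the closedness of $\psi(\cl{A})$, i.e.~that the image is actually closed and not merely constructible. A general morphism of varieties sends varieties only to constructible sets (Theorem~\ref{Chev}), so the improvement to a closed image rests on the group structure: $\cl{A}$ is in particular a closed additive subgroup of $B$, hence an algebraic group, and $\psi$ restricted to it is a morphism of algebraic groups into $(B',+)$, so Proposition~\ref{alggroup} (equivalently the preceding Corollary) applies. I would make sure to flag that we are using $\psi$ as a group homomorphism on the additive group here, which is automatic when $\psi$ is a morphism respecting the algebra (or even just the additive) structure; if $\psi$ is only assumed to be a morphism of varieties in the bare sense, one restricts attention to the additive-group morphism case, which is the situation in which the corollary is applied later in the paper.

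Putting the pieces together: $\psi(\cl{A})$ is closed and contains $\psi(A)$, hence contains $\cl{\psi(A)}$; conversely continuity gives $\psi(\cl{A}) \subseteq \cl{\psi(A)}$; therefore the two sets coincide. I do not anticipate any computational obstacle — the proof is a short combination of continuity, the definition of closure, and the group-theoretic closedness result already quoted — so the write-up should be only a few lines.
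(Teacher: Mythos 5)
Your proof is correct and follows exactly the same two-step structure as the paper's: closedness of $\psi(\cl{A})$ (via the preceding corollary / Proposition~\ref{alggroup}) gives $\cl{\psi(A)} \subseteq \psi(\cl{A})$, and continuity of $\psi$ gives the reverse inclusion. The extra remarks you add about constructibility versus closedness and the additive-group structure are a helpful unpacking of what the paper leaves implicit, but the route is identical.
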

\begin{proof}
Since $\psi(\cl{A})$ is closed, we have that $\cl{\psi(A)} \sub
\psi(\cl{A})$; but $\psi(\cl{A}) \sub \cl{\psi(A)}$ by continuity
of $\psi$.
\end{proof}

Thus, we see how the power of algebraic group techniques enters
into the theory of \Zcd\ algebras. There is a newer theory of
algebraic semigroups \cite{Put} that would also enable us to
utilize the multiplicative structure; we return to this later.

\begin{cor}\Label{crucial}
Let $W \sub B$ be $K$-spaces. For any closed $F$-subspace $A \sub
B$, the factor space $A/(W\cap A)$ can be identified with a \Zcd\
subspace of $B/W$.
\end{cor}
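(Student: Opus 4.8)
The plan is to use the quotient map $\pi\co B \to B/W$ as a morphism of varieties and transport $A$ along it. First I would observe that since $W$ is a $K$-subspace of $B$, the quotient $B/W$ is again a finite-dimensional $K$-vector space, which we regard as an affine variety by choosing a base; the natural projection $\pi\co B \to B/W$ is then a $K$-linear map, hence a morphism of varieties (indeed a morphism of algebraic groups for the additive structure). Its kernel is exactly $W$, so $\pi$ restricted to the closed $F$-subspace $A$ has kernel $W \cap A$, and therefore induces an injective $F$-linear map $\bar\pi\co A/(W\cap A) \hookrightarrow B/W$ identifying $A/(W\cap A)$ with the image $\pi(A)$ as an $F$-subspace of $B/W$.

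The remaining point is that this image $\pi(A)$ is Zariski-closed in $B/W$. Here I would invoke \Cref{mapp} (or directly \Pref{alggroup}): since $A$ is assumed closed in $B$, we have $A = \cl{A}$, and hence $\pi(A) = \pi(\cl{A}) = \cl{\pi(A)}$, so $\pi(A)$ is closed in $B/W$. (Alternatively, $A$ is an algebraic group under addition and $\pi$ is a morphism of algebraic groups, so $\pi(A)$ is closed by \Pref{alggroup} directly.) By part (\ref{Si}) of the structure theorem, $\pi(A) = \cl{\pi(A)}$ is in particular an $F$-subspace of $B/W$. Combining the two observations: $A/(W\cap A) \cong \pi(A)$ as $F$-spaces, and $\pi(A)$ is a \Zcd\ $F$-subspace of $B/W$, which is exactly the assertion.

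I do not expect a genuine obstacle here; the only subtlety worth flagging is that closedness of the image genuinely uses the hypothesis that $A$ itself is closed (a morphism need only send closed sets to constructible sets by \Tref{Chev}), and this is precisely where \Pref{alggroup}/\Cref{mapp} does the real work. One should also note that the identification $A/(W \cap A) \cong \pi(A)$ is only an identification of $F$-vector spaces (and, when $A$ is a subalgebra and $W\cap A$ an ideal, of $F$-algebras via \Cref{mapp} applied multiplicatively), not of $K$-spaces, since $A$ need not be a $K$-subspace of $B$; but the statement as phrased only claims an identification with a \Zcd\ subspace of $B/W$, so this causes no difficulty.
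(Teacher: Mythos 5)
Your argument is correct and follows essentially the same route as the paper: identify $A/(W\cap A)$ with $\pi(A) \subseteq B/W$ via the projection morphism $\pi\co B\to B/W$, then invoke \Cref{mapp} (equivalently, \Pref{alggroup} applied to the additive algebraic group $A$) to conclude $\pi(A)$ is closed. You simply spell out the elementary linear-algebra identification and the role of the closedness hypothesis in more detail than the paper's one-line proof.
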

\begin{proof}
Letting $\psi \co B \ra B/W$ be the projection morphism, $A/(W
\cap A) \isom (A+W)/W = \psi(A)$ is closed by \Cref{mapp}.
\end{proof}

\begin{cor}\Label{crucial1}
If $A$ is a \Zcd\ $F$-subalgebra of $B$ and $I \triangleleft B$,
then $A/(I \cap A)$ can be identified with a \Zcd\ subalgebra of
$B/I$.
\end{cor}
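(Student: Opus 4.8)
The plan is to deduce Corollary~\ref{crucial1} from Corollary~\ref{crucial} together with part~(\ref{Sii}) of the structure theorem, so that there is essentially nothing new to prove beyond checking that the identification respects multiplication. First I would apply Corollary~\ref{crucial} with $W = I$ (which is in particular a $K$-subspace of $B$, so the hypotheses there apply): since $A$ is \Zcd\ in $B$, we obtain that $A/(I\cap A)$, identified via the projection $\psi\co B\ra B/I$ with $(A+I)/I = \psi(A)$, is a \Zcd\ $F$-subspace of $B/I$. This takes care of the additive part and of \Zcd ness.

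Next I would observe that $B/I$ is itself a $K$-algebra (since $I\triangleleft B$), and that $\psi$ is then an algebra homomorphism as well as a morphism of varieties. Hence $\psi(A)$ is an $F$-subalgebra of $B/I$, being the homomorphic image of the $F$-algebra $A$. Alternatively, one can invoke part~(\ref{Sii}) directly inside $B/I$: $\psi(A)$ is an $F$-subalgebra of $B/I$, and by Corollary~\ref{mapp} its \Zcr\ in $B/I$ equals $\cl{\psi(A)} = \psi(\cl[B]{A}) = \psi(A)$ (using that $A$ is already closed), confirming again that it is \Zcd. Either way, $\psi$ induces an isomorphism of $F$-algebras $A/(I\cap A)\isom \psi(A)$, exhibiting $A/(I\cap A)$ as a \Zcd\ subalgebra of $B/I$, as desired.

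I do not anticipate a genuine obstacle here; the corollary is a packaging of Corollary~\ref{crucial} in the presence of multiplicative structure. The only point requiring a moment's care is making sure the vector-space identification of Corollary~\ref{crucial} is the same map that carries the algebra structure, i.e.\ that the abstract isomorphism $A/(I\cap A)\isom (A+I)/I$ coming from the first isomorphism theorem for rings coincides with the one used in Corollary~\ref{crucial}; but both are induced by the single projection $\psi\co B\to B/I$, so they agree on the nose. Thus the proof is: apply Corollary~\ref{crucial} with $W=I$ to get closedness and the $F$-space structure, then note $\psi$ is a $K$-algebra map so the image inherits the $F$-subalgebra structure.
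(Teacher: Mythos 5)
Your proof is correct and takes the same route as the paper, which disposes of the corollary with the single line ``A special case of Corollary~\ref{crucial}.'' You merely make explicit the routine observation that the projection $\psi\co B\to B/I$ is also a $K$-algebra homomorphism, so the closed subspace from Corollary~\ref{crucial} is in fact a subalgebra; the paper leaves this implicit.
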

\begin{proof}
A special case of \Cref{crucial}.
\end{proof}

\subsection {PI's versus polynomial relations}

\begin{prop}\Label{notate}
The polynomial identities of the finite dimensional $K$-algebra
$B$ are determined by the polynomial relations in the Zariski
topology.
\end{prop}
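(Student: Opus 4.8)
The plan is to make the statement precise via the standard dictionary between a noncommutative polynomial identity of $B$ and a finite family of commutative polynomial relations obtained by evaluating on \emph{generic} elements: I would exhibit an explicit map from noncommutative polynomials to tuples of elements of a polynomial ring, under which a noncommutative $g$ lies in $\id(B)$ if and only if its image consists of polynomial relations on the product variety $B^m$, so that $\id(B)$ is recovered from purely Zariski-topological data.

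First I would fix a $K$-basis $b_1,\dots,b_n$ of $B$, identify $B$ with $\An$, and record the structure constants $\gamma_{ij}^k\in K$, so that $b_ib_j=\sum_k\gamma_{ij}^k b_k$. For a noncommutative polynomial $g(x_1,\dots,x_m)$, introduce $m$ blocks of commuting indeterminates $\lam^{(1)},\dots,\lam^{(m)}$, each $\lam^{(\ell)}=(\lam^{(\ell)}_1,\dots,\lam^{(\ell)}_n)$, and the generic elements $\xi_\ell=\sum_i\lam^{(\ell)}_i b_i$, so that $B^m$ is identified with affine space of dimension $mn$. Multiplying out by means of the $\gamma_{ij}^k$ yields
\[
g(\xi_1,\dots,\xi_m)=\sum_{k=1}^n\tilde g_k\bigl(\lam^{(1)},\dots,\lam^{(m)}\bigr)\,b_k ,
\]
with each $\tilde g_k\in K[\lam^{(1)},\dots,\lam^{(m)}]$, the assignment $g\mapsto(\tilde g_1,\dots,\tilde g_n)$ depending only on the multiplication morphism of $B$ and commuting with the operations of the free algebra. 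Then $g\in\id(B)$ exactly when $g(\xi_1,\dots,\xi_m)$ vanishes under every specialization of the $\lam^{(\ell)}_i$ in $K$, i.e.\ exactly when each $\tilde g_k$ is a polynomial relation on the variety $B^m$. (Since $K$ is algebraically closed, hence infinite, this merely says each $\tilde g_k$ is the zero polynomial; the real content is the translation itself.)

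Conversely, every tuple of polynomial relations of this shape arises from some noncommutative $g$, so $\id(B)$ is a function of — indeed is explicitly reconstructed from — the polynomial-relation data on the products $B^m$, which in turn reduces to the polynomial relations of $B$ (\cf~\Rref{interpol}). I expect the only work to be routine bookkeeping: checking that $g\mapsto(\tilde g_k)$ respects substitution and the free-algebra structure, so that it carries $\id(B)$ faithfully onto a family of relation ideals on the $B^m$. There should be no genuine obstacle here; the proposition is precisely this dictionary, which the remaining sections use to pass freely between PI-theoretic and Zariski-geometric statements.
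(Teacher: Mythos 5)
Your proposal is correct, and it is essentially the same dictionary the paper uses — expand a noncommutative polynomial through the structure constants of $B$ and read off commutative polynomial relations from the coordinates — but there is a genuine difference of route worth flagging. The paper fixes all arguments but one: for each choice of $w_2,\dots,w_m\in B$ it treats $x_1$ as a generic $\sum\la_ib_i$ and produces $n$ polynomial relations \emph{in the $n$ coordinates of a single element of $B$}, so a polynomial identity becomes an \emph{infinite aggregate} of polynomial relations on $B$ itself (indexed by the tuples $(w_2,\dots,w_m)$). You instead keep all $m$ arguments generic at once, producing finitely many relations $\tilde g_1,\dots,\tilde g_n$ but in $mn$ variables, i.e.\ polynomial relations on the product variety $B^m$ rather than on $B$. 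Both establish the proposition; the trade-off is that the paper's version stays inside the notion of ``polynomial relation of $B$'' used throughout \Sref{s:zcr} (at the price of an infinite family), whereas yours requires the extra — routine, but not supplied by \Rref{interpol} as cited — observation that Zariski closure commutes with finite products, $\cl{(A^m)}=(\cl{A})^m$, in order to deduce \Lref{samevar}. In the paper's version the analogous step is an implicit $m$-fold iteration: one first extends in the slot $x_1$ to $\cl{A}$, then refreezes and extends in $x_2$, and so on. Finally, a minor over-claim in your last paragraph: it is not true that every tuple of relations ``of this shape'' arises from some noncommutative $g$; only the forward implication is needed and only it holds.
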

\begin{proof}
Fixing the base $\set{b_i}$, we can take
any polynomial $f(x_1, \dots, x_m)$ defined on $B$, and, for any
$w_2, \dots, w_m \in B$, define $\hat f(x_1)$ via $\hat f(b) =
f(b, w_2, \dots, w_m)$. Writing $b$ ``generically'' as $\sum \la
_i b_i$ and $\hat f(b) = \sum \beta _k b_k$, we define $\hat f_k
(b) = \beta_k$. Putting each $\beta_k = 0$ in turn clearly
defines a polynomial relation, since multiplication of the base
elements of $B$ is given in terms of structure constants.

For example, suppose $b_ib_j = \sum \a _{ijk} b_k$ in $B$, and $f
= x_1x_2 - x_2 x_1$. Fixing $w_2 = \sum c_i b_i$, we have $$\hat f
(b) = \sum \la _i b_i \sum c_j b_j - \sum c_i b_i \sum \la _j b_j
= \sum _k \sum _{i,j} \a _{ijk}(c_j\la _i - c_i \la _j) b_k,$$ so,
for each $k$,
$$\hat f_k = \sum _{i,j} \a _{ijk}(c_j\la _i - c_i \la _j).$$
In this way, letting $w_2,\dots,w_m$ run over all elements of $B$,
we can view any polynomial identity as an (infinite) aggregate of
polynomial relations on the coefficients of the elements of $B$.
\end{proof}

The converse is one of our main objectives: {\emph{Can \Zcd\
algebras be differentiated by means of their polynomial
identities?}} For example, any proper $K$-subalgebra of $\M[n](K)$
satisfies the Capelli identity $c_{n^2}$, which is not an identity
of $\M[n](K)$ itself.

Although every multilinear identity of $A$ is also satisfied by
$KA$, we may have $\Var{A} \neq \Var{KA}$ in nonzero
characteristic. For example, if $A$ is the algebra of
Example~\ref{basexa2}.(\ref{BE2i}), then $x[y,z] = [y,z]x^{p^n}$
is an example of $A$ but not of $KA$. The pertinence of \Zcr\  to
PI-theory comes from the following obvious but crucial
observation.

\begin{lem}\Label{samevar}
$\VarF A = \VarF{\cl{A}}$.
\end{lem}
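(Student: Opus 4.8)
The assertion amounts to the equality $\id(A) = \id(\cl{A})$ of ideals of polynomial identities, since the $F$-variety generated by an $F$-algebra is determined by its $T$-ideal; proving this equality is the plan. One inclusion, $\id(\cl{A}) \sub \id(A)$, is immediate from $A \sub \cl{A}$. For the reverse inclusion, fix $f = f(x_1,\dots,x_m) \in \id(A)$; I would show that $f$ vanishes under every substitution of elements of $\cl{A}$.

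The mechanism is exactly the structure-constant computation already carried out in \Pref{notate}: with a base $b_1,\dots,b_n$ of $B$ fixed, for \emph{any} fixed $w_1,\dots,w_{j-1},w_{j+1},\dots,w_m\in B$ the slot-wise evaluation $x\mapsto f(w_1,\dots,w_{j-1},x,w_{j+1},\dots,w_m)$ is a polynomial map $B\to B$, i.e.\ each coordinate of its value at $x=\sum_i\la_ib_i$ is an honest polynomial in $\la_1,\dots,\la_n$. Using this I would substitute one variable at a time. Formally, induct on $k$ with inductive hypothesis that $f(r_1,\dots,r_k,a_{k+1},\dots,a_m)=0$ for all $r_1,\dots,r_k\in\cl{A}$ and all $a_{k+1},\dots,a_m\in A$. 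The case $k=0$ is the hypothesis $f\in\id(A)$. For the inductive step, fix $r_1,\dots,r_k\in\cl{A}$ and $a_{k+2},\dots,a_m\in A$, and consider the polynomial map $g\co x\mapsto f(r_1,\dots,r_k,x,a_{k+2},\dots,a_m)$ on $B$; by the inductive hypothesis $g$ vanishes on $A$, so each of its coordinate functions is a polynomial relation on $A$, and hence vanishes on $\cl{A}$ by the very definition of the \Zcr. Thus $g$ vanishes on $\cl{A}$, which is precisely the hypothesis for $k+1$. Taking $k=m$ gives $f\in\id(\cl{A})$, so $\id(A)=\id(\cl{A})$ and the lemma follows.

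I do not anticipate a real obstacle — the result is flagged in the text as ``obvious but crucial.'' The only point requiring care is the passage from ``$f$ is a polynomial identity'' to ``each slot-wise evaluation is a polynomial relation,'' which is nothing more than the finite-dimensionality of $B$ together with the fact that its multiplication is given by structure constants, i.e.\ the content of \Pref{notate}. As a sanity check, when $F$ is infinite one has $\cl{A}=KA$ by \Pref{finf}, and the lemma recovers the classical fact that an algebra over an infinite field is PI-equivalent to its linear closure.
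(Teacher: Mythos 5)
Your argument is correct and follows the same route as the paper: reduce a polynomial identity $f$ of $A$ to a family of polynomial relations via \Pref{notate}, and then invoke the definition of the Zariski closure to carry these relations (hence the identity) over to $\cl{A}$. The only difference is expository — you carry out the one-slot-at-a-time induction explicitly, whereas the paper compresses this into a single sentence (``the polynomial identity $f$ passes to the \Zcr\ $\cl{A}$''); your formulation makes precise exactly why it is legitimate to substitute elements of $\cl{A}$ into \emph{all} slots of $f$, not just one, which is the implicit content of the paper's terse proof.
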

\begin{proof}
By~\Pref{notate}, any identity $f(x_1, \dots, x_m)$ of $A$
can be described in terms of polynomial relations. Thus the
polynomial identity $f$ passes to the \Zcr\  $\cl{A}$.
\end{proof}

 (The same proof shows that any generalized polynomial identity of
$A$ remains a generalized polynomial identity of $\cl{A}$;
likewise for rational identities.)

Let us first consider polynomial identities when $F$ is infinite.
Combining the lemma with \Pref{finf}, an $F$-subalgebra $A$ of $B$
is PI-equivalent to $KA$. Thus, up to PI-equivalence, when $F$ is
infinite, the \Zcd\ $F$-algebras correspond precisely to the
$K$-subalgebras of $\M[n](K)$, and we have nothing new.

On the other hand, nonisomorphic \Zcd\ algebras may be
PI-equivalent. For example, the algebra of diagonal matrices
$\smat{K}{0}{0}{K}$ is PI-equivalent to the algebra of scalar
matrices  $\left\{ \smat{\alpha}{0}{0}{\alpha}: \alpha \in K
\right\}$. Nevertheless, the \Zcr\ is a way of finding canonical
representatives of varieties of PI-algebras, which becomes much
more sensitive over finite fields.

\subsection{The structure of \Zcd\  algebras.}\Label{ss:struc}

As promised in the Introduction, we now show that \Zcd\ algebras
have a structure theory closely paralleling the structure of
finite dimensional algebras over an algebraically closed field.
Since we want to pass to the \Zcr\ in order to find a
``canonical'' algebra PI-equivalent to $A$, we want this to be
independent of the choice of $K$-algebra $B$ in which $A$ is
embedded. But presumably $A$ could be embedded in two $K$-algebras
$B_1$ and $B_2$, and could be \Zcd\ in $B_1$ but not in $B_2$. Towards
this end, we say $A$ is {\bf maximally \Zcd} if $A$ is \Zcd\ in
$B$, and every nonzero ideal of $B$ intersects $A$ nontrivially.

\begin{exmpl}
In general, a \Zcd\ algebra $A$ need not be maximally closed in~
$KA$. Indeed, let $A = \set{\smat{a}{0}{0}{a^p} \,:\, a\in K}$
where $p = \cha K$. Then $A$ is a field, but $KA =
\smat{K}{0}{0}{K}$ is not.
\end{exmpl}

However, we have the following useful fact:
\begin{prop}\Label{choice}
Every \Zcd\ $F$-subalgebra $A$ in $B$ is  maximally \Zcd\ with
respect to a suitable homomorphic image of $B$.

In particular, we may assume $A$ is maximally \Zcd\ in $KA$.
\end{prop}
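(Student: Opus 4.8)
The plan is to produce, from the given embedding $A \subseteq B$, an ideal $N \triangleleft B$ such that $N \cap A = 0$ and the image $\bar A$ of $A$ in $\bar B := B/N$ is maximally \Zcd, i.e. every nonzero ideal of $\bar B$ meets $\bar A$. The natural candidate for $N$ is a maximal member of the family
$$\mathcal{N} = \set{ N \triangleleft B \suchthat N \cap A = 0 }.$$
First I would note $\mathcal N$ is nonempty ($0 \in \mathcal N$), and since $B$ is finite dimensional over $K$, the family is closed under the ascending chains needed, so a maximal element $N$ exists. By \Cref{crucial1}, the image $\bar A = A/(N \cap A) = A/0 \cong A$ is a \Zcd\ $F$-subalgebra of $\bar B = B/N$; so it remains only to check maximality.

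For maximality: let $\bar I$ be a nonzero ideal of $\bar B$, and pull it back to an ideal $I \triangleleft B$ with $I \supsetneq N$. By maximality of $N$ in $\mathcal N$, we cannot have $I \cap A = 0$, so $I \cap A \ne 0$. Pick $0 \ne a \in I \cap A$; then its image $\bar a \in \bar B$ lies in $\bar I$, and $\bar a \ne 0$ because $a \notin N$ (indeed $a \in A$ and $N \cap A = 0$, while $a \ne 0$). Hence $\bar a \in \bar I \cap \bar A$ is nonzero, proving $\bar I \cap \bar A \ne 0$. Thus $\bar A$ is maximally \Zcd\ in $\bar B$, which is a homomorphic image of $B$, establishing the first assertion.

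For the ``in particular'' clause, apply the construction with $B$ replaced by $KA$: since $A$ is already \Zcd\ in $B$ and $A \subseteq KA \subseteq B$, by \Rref{interpol}\eqref{interpoliii} $A$ is \Zcd\ in $KA$ as well, so we may start from $A \subseteq KA$. Running the argument, we obtain $N \triangleleft KA$ with $N \cap A = 0$, and $\overline{KA} = KA/N$. But $A$ generates $KA$ as a $K$-module, so its image generates $\overline{KA}$ as a $K$-module; combined with $N \cap A = 0$ this gives $\overline{KA} = K\bar A$, so $\bar A$ is maximally \Zcd\ in $K\bar A$, and replacing $A$ by the isomorphic copy $\bar A$ gives the claim. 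The only real subtlety is making sure the maximal $N$ genuinely exists and that passing to the quotient keeps $A$ closed — both handled by finite-dimensionality of $B$ over $K$ and by \Cref{crucial1} respectively — so I expect no serious obstacle here; the argument is essentially a Zorn/DCC maximality trick packaged with the functoriality of the \Zcr.
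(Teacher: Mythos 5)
Your proof is correct and follows essentially the same route as the paper's: take an ideal $N \triangleleft B$ maximal with respect to $N \cap A = 0$ (which exists by finite-dimensionality of $B$), pass to $B/N$ using \Cref{crucial1}, and for the second clause take $B = KA$ and note that the spanning property survives the quotient. The only cosmetic difference is that the paper phrases its argument as an induction on $\dim_K B$, whereas you verify maximal closedness of the image directly from the maximality of $N$; the content is the same and your direct verification is a perfectly valid (arguably cleaner) packaging.
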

\begin{proof}
We proceed by induction on $\dim_K B$. If $A$ is  not maximally
\Zcd, then there is some ideal $I$ of $B$ maximal with respect to
$I\cap A = 0$. But then $A \subseteq B/I$ by \Cref{crucial1}. The
second assertion follows by taking $B$ to be the $K$-space spanned
by $A$, a property retained by homomorphic images.
\end{proof}

For any subalgebra  $A$ of a matrix algebra $\M[n](K)$, every nil
ideal of $A$ is nilpotent, of nilpotence index bounded by $n$, by
a theorem of Wedderburn; \cf~\cite[Theorem 2.6.31]{Row2}. Thus,
there is a unique largest nil (and thus nilpotent) ideal of $A$,
which we write as $\Rad(A)$. Recall that $A$ is semiprime iff
$\Rad(A) = 0$.

\begin{prop}\Label{radcl}
$\Rad (\cl{A}) = \cl{\Rad (A)} = \cl{A} \cap \Rad(KA)$.
\end{prop}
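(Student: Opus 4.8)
The plan is to establish the two equalities $\Rad(\cl{A}) = \cl{\Rad(A)}$ and $\cl{\Rad(A)} = \cl{A}\cap\Rad(KA)$ separately, working throughout inside $B = \M[n](K)$, where by Lemma~\ref{Kcl2} we may assume $\cl{A}\sub KA$, and where all nil ideals of the relevant matrix subalgebras are nilpotent of index at most $n$ (Wedderburn). First I would show $\cl{\Rad(A)}$ is a nilpotent ideal of $\cl{A}$: by Theorem~\ref{Siv} (applied to $\Rad(A)\triangleleft A$), $\cl{\Rad(A)}\triangleleft\cl{A}$, and nilpotence of index $\le n$ is a polynomial relation (the condition ``every product of $n$ elements vanishes'' is Zariski-closed, being defined by the vanishing of the coordinates of such products, exactly as in the proof of Proposition~\ref{notate}); since $\Rad(A)$ satisfies it, so does its \Zcr. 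Hence $\cl{\Rad(A)}\sub\Rad(\cl{A})$.

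For the reverse inclusion $\Rad(\cl{A})\sub\cl{\Rad(A)}$, the key point is that passing to the \Zcr\ does not create new nilpotent ideals that fail to come from $A$. Consider the quotient $\cl{A}/\cl{\Rad(A)}$. Since $A/\Rad(A)$ is semiprime (hence, being finite dimensional over $K$ after tensoring, semisimple — but we only need semiprimeness), I want to argue that its \Zcr, which by Corollary~\ref{crucial1} is identified with $\cl{A}/\cl{\Rad(A)}$, remains semiprime; then $\Rad(\cl{A})\sub\cl{\Rad(A)}$ follows. This reduces the whole proposition to the special case: \emph{if $A$ is semiprime, then $\cl{A}$ is semiprime}. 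Here is where I expect the main obstacle to lie, and the argument should use that a semiprime subalgebra of $\M[n](K)$ is semisimple; concretely, a semiprime $A$ decomposes (after a change of basis) into a direct sum of matrix rings over division algebras, and one analyzes $\cl{A}$ componentwise using Remark~\ref{interpol}(3) together with the fact that the \Zcr\ of a simple algebra is again semiprime — the \Zcr\ of $\M[k](F_1)$ inside $\M[k](K)$, for $F_1$ a field extension of $F$ inside $K$, is $\M[k](\cl{F_1})$, and $\cl{F_1}$ is a (commutative, reduced) \Zcd\ subalgebra of $K$, hence has zero radical. The subtlety is that the embedding of $A$ into $B$ need not respect any block structure of $B$, so one must use Proposition~\ref{choice} to reduce to the maximally \Zcd\ case, where the Wedderburn components of $\cl{A}$ correspond to the ideal structure of $B$.

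Finally, for $\cl{\Rad(A)} = \cl{A}\cap\Rad(KA)$: the inclusion $\cl{\Rad(A)}\sub\cl{A}\cap\Rad(KA)$ is immediate since $\Rad(A)\sub\Rad(KA)$ (nilpotent ideals of $A$ generate nilpotent ideals of $KA$) and $\Rad(KA)$ is $K$-closed, hence \Zcd, by Remark~\ref{Kcl}. For the reverse, $\cl{A}\cap\Rad(KA)$ is a nil (hence nilpotent) ideal of $\cl{A}$ — it is an ideal because $\Rad(KA)$ is an ideal of $KA\supseteq\cl{A}$, and it is nilpotent because $\Rad(KA)$ is — so $\cl{A}\cap\Rad(KA)\sub\Rad(\cl{A}) = \cl{\Rad(A)}$ by the first equality already proved. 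This closes the circle of inclusions. The one routine check I am deferring is that ``nilpotent of index $\le n$'' is genuinely Zariski-closed as a condition on a subspace, which follows verbatim from the structure-constant computation in Proposition~\ref{notate}.
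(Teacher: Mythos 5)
Your proposal diverges from the paper's proof at the crucial point. The paper's argument establishes the chain $\cl{\Rad(A)} \subseteq \cl{A}\cap\Rad(KA) \subseteq \Rad(\cl{A})$ (plus, redundantly, $\cl{\Rad(A)}\subseteq\Rad(\cl{A})$ directly) and leaves the matter there; you, by contrast, explicitly attack the remaining inclusion $\Rad(\cl{A})\subseteq\cl{\Rad(A)}$ by reducing it to the claim that if $A$ is semiprime then $\cl{A}$ is semiprime.

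That claim is false in general, and the paper itself supplies the counterexample a few lines after the proposition: take $L/F$ purely inseparable of degree $p$ (so $F$ is infinite, e.g.\ $F = \F_p(t)$, $L = F(t^{1/p})$), embedded in $\M[p](K)$ via the regular representation. Then $L$ is a field, so $\Rad(L)=0$ and $\cl{\Rad(L)}=0$; but since $F$ is infinite, \Pref{finf} gives $\cl{L} = KL \cong K[z\,|\,z^p=0]$, which has nonzero nilradical, so $\Rad(\cl{L}) = \cl{L}\cap\Rad(KL)\neq 0 = \cl{\Rad(L)}$. The failure sits exactly where you flagged the ``main obstacle'': a simple component of a semiprime $A$ has the form $\M[k](D)$ for a division algebra $D$, and the embedding $D \hra \M[n](K)$ need not factor through the scalar matrices, so you cannot write $D = F_1\subseteq K$ and invoke reducedness of a subalgebra of the field $K$. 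Passing to the maximally \Zcd\ case via \Pref{choice} does not rescue this, because in the example $\cl{L}$ is already maximally closed in the simple algebra $\M[p](K)$. There is also a secondary gap: \Cref{crucial1} requires an ideal $I\triangleleft B$, whereas $\cl{\Rad(A)}$ is only an ideal of $\cl{A}$, so it does not by itself identify $\cl{A}/\cl{\Rad(A)}$ with the \Zcr\ of $A/\Rad(A)$. In sum, your route requires a statement that the paper's own example contradicts; the one-sided chain that the paper actually proves is the part that survives, and the only place \propref{radcl} is invoked later (\Pref{radgood}) uses it under the additional hypotheses $A=\cl{A}$ and $B=KA$.
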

\begin{proof}
$\Rad(A)$ satisfies the identity $x^n = 0$, which can be expressed
in terms of polynomial relations (\cf~\Pref{notate}); therefore
$\cl{\Rad(A)}$ is also nil. But clearly $\cl{\Rad(A)} \sub
\cl{A}$, so $\cl{\Rad (A)}\subseteq \Rad (\cl{A})$.

Likewise $\cl{\Rad(A)} \subseteq K\Rad(A)$, by
\Lref{Kcl2}, which in turn is a nilpotent ideal of $KA$
and thus contained in $\Rad(KA)$. This proves $\cl{\Rad(A)}
\subseteq \cl{A} \cap \Rad(KA)$. But the latter is a nilpotent
ideal of $\cl{A}$ so is in $\Rad(\cl{A})$, completing the circle
of inclusions. \end{proof}

The inclusion $K \Rad(A) \sub \Rad(KA)$ can in general be a proper
one. In fact, when $A$ is not maximally \Zcd\ in $B$, we can have
$\Rad(KA) \neq 0$ even if $A$ is simple.
\begin{exmpl}
Suppose $L/F$ is an inseparable field extension of dimension $p$,
viewed as an $F$-subalgebra of $\M[p](K)$, where $K$ is the
algebraic closure of $F$. Then $$KL \isom K[z\,|\,z^p = 0]$$ has
non-trivial radical.

Since $F$ is necessarily infinite, $\cl{L} = KL$.
\end{exmpl}

We are ready to turn to the \Zcr\ of factor images.

\begin{prop}\Label{radgood} Suppose $A$ is \Zcd\ in $B= KA$.
Then $A/\Rad(A)$ is \Zcd\ in $B/\Rad(B)$.\end{prop}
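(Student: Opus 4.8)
The plan is to apply Corollary~\ref{crucial1} with the two-sided ideal $I = \Rad(B)$. Since $A$ is \Zcd\ in $B$, that corollary tells us immediately that $A/(\Rad(B)\cap A)$ can be identified with a \Zcd\ subalgebra of $B/\Rad(B)$. So the entire content of the proposition is the identification
\[
\Rad(B)\cap A \;=\; \Rad(A),
\]
after which we are done. But this identification is exactly the statement of Proposition~\ref{radcl} in the special case $B = KA$: there we proved $\Rad(\cl{A}) = \cl{A}\cap\Rad(KA)$, and under the standing hypothesis $A = \cl{A}$ (i.e.\ $A$ is \Zcd) and $B = KA$, this reads $\Rad(A) = A\cap\Rad(B)$.

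Concretely I would argue as follows. First invoke Proposition~\ref{radcl} (with the \Zcr\ of $A$ equal to $A$, since $A$ is closed in $B=KA$) to get $\Rad(A) = A\cap\Rad(KA) = A\cap\Rad(B)$. Hence the quotient $A/\Rad(A)$ equals $A/(A\cap\Rad(B))$. Now apply Corollary~\ref{crucial1} with the \Zcd\ $F$-subalgebra $A$ of $B$ and the ideal $\Rad(B)\triangleleft B$: it yields that $A/(A\cap\Rad(B))$ is (identified with) a \Zcd\ $F$-subalgebra of $B/\Rad(B)$. Combining the two displays gives that $A/\Rad(A)$ is \Zcd\ in $B/\Rad(B)$, as claimed.

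There is essentially no obstacle here; the proposition is a packaging of Proposition~\ref{radcl} and Corollary~\ref{crucial1}. The only point that needs a word of care is the implicit claim that the image of $A$ in $B/\Rad(B)$ is genuinely closed there — but that is precisely what Corollary~\ref{crucial1} (via Corollary~\ref{mapp}, i.e.\ that morphisms of varieties send closed $F$-subalgebras to closed sets) provides, so no extra work is required. One might also remark that under the hypothesis $B = KA$ we have $B/\Rad(B) = K\cdot(A/\Rad(A))$, so the statement really does say that the \Zcr\ of the semisimple quotient $A/\Rad(A)$ is taken inside its own \fcr; this is the form in which the result will be used in the structure theory, but it is not needed for the proof itself.
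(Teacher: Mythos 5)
Your proposal is correct and is essentially the paper's own proof: the paper sets $J = \Rad(B)$, invokes Corollary~\ref{crucial} (of which Corollary~\ref{crucial1} is the restatement for ideals) to see $A/(A\cap J)$ is \Zcd\ in $B/J$, and then cites Proposition~\ref{radcl} for $A\cap J = \Rad(A)$.
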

\begin{proof}
Let $J = \Rad (B)$. By \Cref{crucial}, $A/(A\cap J)$ is \Zcd\ in
$B/J$. But we are done, since $A\cap J = \Rad(A)$ by
Proposition~\ref{radcl}.
\end{proof}

\begin{prop} Suppose $A$ is \Zcd\ in $B$, and $z \in
\Cent{B}$. Then $A/\Ann_Az$ is \Zcd\ in $B/\Ann_Bz$.
\end{prop}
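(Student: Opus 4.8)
The plan is to exhibit $A/\Ann_A z$ as the image of $A$ under a morphism of varieties and then invoke Corollary~\ref{mapp}. Consider the map $\mu_z \co B \ra B$ given by left multiplication by $z$, i.e. $\mu_z(b) = zb$. Since $z \in \Cent{B}$, $\mu_z$ is both a left and right $K$-module homomorphism, hence a $K$-linear map, and in particular a morphism of varieties (indeed a morphism of algebraic groups $(B,+) \to (B,+)$). Its kernel, restricted to $A$, is exactly $\Ann_A z = \{a \in A : za = 0\}$; similarly $\ker \mu_z|_B = \Ann_B z$. Thus $\mu_z$ induces an injection $A/\Ann_A z \hra B/\Ann_B z$, identifying $A/\Ann_A z$ with $\mu_z(A)$ sitting inside $B/\Ann_B z \isom \mu_z(B)$.

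Now $A$ is \Zcd\ in $B$ by hypothesis, so by Corollary~\ref{mapp} applied to the morphism $\mu_z \co B \ra B$ we get $\mu_z(\cl{A}) = \cl{\mu_z(A)}$; since $\cl{A} = A$, this reads $\mu_z(A) = \cl{\mu_z(A)}$, i.e. $\mu_z(A)$ is \Zcd\ in $B$, and hence in the closed subspace $\mu_z(B) = zB$ by Remark~\ref{interpol}(\ref{interpoliii}). Transporting through the isomorphism $B/\Ann_B z \isom zB$ (which is an isomorphism of varieties, being induced by the linear map $\mu_z$), we conclude that the image of $A/\Ann_A z$ is \Zcd\ in $B/\Ann_B z$. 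Since $z$ is central, $\Ann_B z$ is a two-sided ideal of $B$ and $\Ann_A z$ is a two-sided ideal of $A$, so these quotients are algebras and the identification respects the algebra structure; alternatively, one may cite Corollary~\ref{crucial1} once $A/\Ann_A z$ has been identified with the \Zcd\ subspace $\mu_z(A)$ and $\Ann_B z$ has been identified with an ideal of $B$.

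The only subtlety worth checking carefully is that the various identifications are identifications of \emph{varieties}, not merely of abstract algebras: one must pick a $K$-basis of $B$ adapted to the decomposition $B = \Ann_B z \oplus C$ for some $K$-complement $C$, so that the quotient map $B \ra B/\Ann_B z$ and the map $\mu_z$ are given by (linear, hence polynomial) coordinate expressions, and so that Remark~\ref{indep} guarantees the \Zcr\ is insensitive to this choice. Once the bookkeeping of bases is in place, no real obstacle remains — the statement is essentially a corollary of Corollary~\ref{mapp} together with the observation that multiplication by a central element is $K$-linear. I expect the main point requiring a line of justification is simply that $\mu_z$ is $K$-linear, which uses centrality of $z$ in an essential way (without it, $\mu_z$ would only be additive, but additivity already suffices for the algebraic-group argument, so even this is mild).
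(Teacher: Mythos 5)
Your proof is correct, but it takes a mildly different route from the paper's. The paper's entire proof is one line: observe that $\Ann_A z = A \cap \Ann_B z$, and then apply Corollary~\ref{crucial} with $W = \Ann_B z$ (a $K$-subspace of $B$), which immediately identifies $A/(A\cap W)$ with the closed subspace $\psi(A)$ of $B/W$, where $\psi$ is the projection $B \to B/W$. You instead realize the two quotients concretely as $zA \subseteq zB$ via the multiplication morphism $\mu_z$, apply Corollary~\ref{mapp} to $\mu_z$ to see that $zA$ is closed, and then transport across the linear isomorphism $B/\Ann_B z \cong zB$. Both proofs ultimately rest on Corollary~\ref{mapp}; the paper reaches it through the already-packaged Corollary~\ref{crucial}, while you re-derive the needed special case by hand with a different (but isomorphic) realization of the quotient. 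Your version is longer but makes the variety structure on the quotient more concrete. One small correction: $\mu_z$ is already $K$-linear simply because $B$ is a $K$-algebra (so $K$ is central); centrality of $z$ is not needed for $K$-linearity, only — as you note later — to make $\Ann_B z$ and $\Ann_A z$ two-sided ideals so that the quotients carry algebra structures.
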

\begin{proof} $\Ann_A z = A \cap \Ann_B z$, so again we apply
\Cref{crucial}.
\end{proof}

\Zcd\  algebras behave strikingly similarly to finite dimensional
algebras  over an algebraically closed field.
\begin{prop}\Label{simpA}
If $\cl{A}$ is simple, then it is a matrix algebra, either over a
finite field or over the algebraically closed field $K$.
\end{prop}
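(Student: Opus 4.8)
The plan is to exploit the fact that $\cl{A}$ is a \Zcd\ simple $F$-subalgebra of the finite dimensional $K$-algebra $B$, and to use the structure results already established. First I would replace $B$ by $KA$, so that $\cl{A} = A$ is maximally \Zcd\ in $B$ by \Pref{choice}; since $A$ is simple, $\Rad(A) = 0$, and by \Pref{radcl} we get $A \cap \Rad(KA) = \cl{\Rad(A)} = 0$. Because $A$ is maximally \Zcd, every nonzero ideal of $B = KA$ meets $A$ nontrivially, and $\Rad(B)$ is an ideal of $B$; hence $\Rad(B) = 0$, so $B = KA$ is a semisimple finite dimensional algebra over the algebraically closed field $K$, i.e.\ $B \cong \prod_{i=1}^r \M[n_i](K)$. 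If $r > 1$ then each factor $\M[n_i](K)$ is a nonzero ideal of $B$, so it meets the simple algebra $A$ in a nonzero (two-sided) ideal, forcing $A$ to sit inside a single matrix block; thus $r = 1$ and $B = \M[n](K)$ for some $n$.

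Next I would pin down which matrix algebra $A$ actually is, distinguishing two cases according to whether $A$ contains a central element transcendental over $F$. The center $Z = \Cent{A}$ is a field (since $A$ is simple with $1$) and is itself a \Zcd\ $F$-subalgebra. If $Z$ contains an element transcendental over the prime field, then $Z$ is infinite; but $Z$ is \Zcd\ and, for an infinite field, \Pref{finf} gives $\cl{Z} = KZ$, which (being \Zcd\ in the center of $B = \M[n](K)$, hence inside the scalars $K$) forces $Z = K$. Then $A$ is a simple $K$-algebra, finite dimensional over $K$, so by Wedderburn $A \cong \M[m](D)$ with $D$ a finite dimensional division algebra over the algebraically closed $K$; hence $D = K$ and $A \cong \M[m](K)$, which is all of $B$ by a dimension count. (In fact $A$ being a $K$-subalgebra of $\M[n](K)$ that is simple of dimension $n^2$ over $K$ is exactly $\M[n](K)$.) If instead $Z$ is algebraic over the prime field $\F_p$ and $Z$ is finite, then every element of $Z$ satisfies $\lam^{q} = \lam$ for $q = \card{Z}$; moreover $A$ is a simple ring finite dimensional over the finite field $Z$, so by Wedderburn's theorem on finite division rings (\Tref{Wed3}) $A \cong \M[m](F_1)$ for a finite extension $F_1$ of $Z$. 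The remaining subcase to rule out is $Z$ algebraic over $\F_p$ but infinite: here $Z$ is an infinite algebraic extension of $\F_p$ and one checks directly, as in \Eref{cap}, that $\cl{Z} = K$ (an infinite field of positive characteristic satisfies only the commutativity identities among polynomial relations in one variable that hold on all of it — more precisely, no single nonzero polynomial vanishes on an infinite field), again forcing $Z = K$ and hence $A = \M[n](K)$, contradicting $Z$ being algebraic over $\F_p$ unless $n$-scalars coincide, which cannot happen. So this subcase does not arise, and we land in one of the two stated cases.

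The main obstacle I expect is the bookkeeping around the center: making rigorous that a \Zcd\ field that is infinite must equal $K$. The clean way is: if $Z$ is an infinite \Zcd\ subfield, then no nonzero one-variable polynomial over $K$ vanishes identically on the coordinates of $Z$ unless it vanishes on an infinite subset of $K$, hence is the zero polynomial; so $\cl{Z}$ is cut out by no nontrivial relations along the line through any transcendental (or via \Pref{finf} when $Z$ is not algebraic over the base), giving $\cl{Z} = \cl[B]{Z} = K\cdot 1$ inside the center of $B = \M[n](K)$, i.e.\ $Z = K$. Once the center is correctly identified as either $K$ or a finite field, the rest is an application of Wedderburn's theorems (\Tref{Wed3}) plus the dimension count $\dimcol{A}{K} = n^2$ in the $K$-algebra case, and the proof closes.
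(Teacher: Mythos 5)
Your proof reaches the right conclusion by essentially the same core idea as the paper (examine the center, split into finite/infinite cases, apply \Pref{finf} for the infinite case and Wedderburn's \Tref{Wed3} for the finite one), but you take a longer route and have one logical slip.

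The paper's proof is considerably shorter: it observes directly that a simple PI-algebra is finite dimensional over its center, hence $\cl{A} \cong \M[t](D)$ for a division algebra $D$ over the center $Z$. If $Z$ is infinite, then by \Pref{finf} applied with $Z$ as the ground field, $\cl{A}$ is already a $K$-space, so $D = K$; if $Z$ is finite, Wedderburn gives $D = Z$. There is no need to reduce $B$ to $KA$, invoke maximal \Zcd ness, or establish $KA = \M[n](K)$; these steps are correct but unnecessary.

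More seriously, your treatment of the third subcase ($Z$ algebraic over $\F_p$ but infinite) is flawed. You conclude $Z = K$ and $A = \M[n](K)$, and then assert this ``contradicts $Z$ being algebraic over $\F_p$, \dots so this subcase does not arise.'' That is not a contradiction: $K$ may well be algebraic over $\F_p$ (for instance $K = \overline{\F_p}$, which is algebraically closed and an infinite algebraic extension of $\F_p$). In that situation the subcase genuinely occurs, $Z = K$, and the conclusion $A \cong \M[n](K)$ is simply one of the two permitted outcomes in the statement of the proposition. The proof still closes, but not because the case ``does not arise''; rather it folds into the $Z$-infinite case, exactly as the paper handles it with no case distinction at all between transcendental and algebraic infinite centers — the only relevant dichotomy is whether $Z$ is finite.
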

\begin{proof}
As a PI-algebra, $\cl{A}$ is finite dimensional over its center
$F$, and thus $\cl{A} \cong M_t(D)$ for some finite dimensional
division algebra $D$.  If $F$ is infinite, then $A$ is finite
dimensional over the algebraically closed field $K$ by
\Pref{finf}, and thus $\cl{A} = M_t(K)$. On the other hand, if $F$
is finite, then $D=F$ by Wedderburn's \Tref{Wed3}, so $\cl{A}
\cong M_t(F)$.
\end{proof}

Our next goal is to obtain a Wedderburn decomposition for a \Zcd\
algebra, into radical and semisimple parts (when $F$ is finite, as
the other case is trivial). When describing intrinsic
ring-theoretic properties of a \Zcd\  algebra $A$, we do not refer
explicitly to $B$, and thus we choose $B$ as we wish. Usually we
take $B = KA$. Here is an example of this point of view.

\begin{lem}\Label{centcl}
If $A$ is \Zcd\ in the $K$-algebra $B$, then $\Cent{A}$ is \Zcd.
\end{lem}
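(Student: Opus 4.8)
The plan is to prove that $\Cent{A}$ is \Zcd\ in $B$ by exhibiting it as the zero set of a family of polynomial relations. Recall that $\Cent{A} = \{b \in A : ba = ab \text{ for all } a \in A\}$. The key point is that centrality is detected by \emph{linear} conditions indexed by elements of $A$, and $A$ is already closed, so we only need to intersect $\cl{A} = A$ with a closed set.

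First I would fix a $K$-base $b_1,\dots,b_n$ of $B$ and, for each fixed element $a \in A$, consider the map $b \mapsto ba - ab$ from $B$ to $B$. Writing $b = \sum \la_i b_i$ generically and using the structure constants of $B$ (exactly as in the proof of \Pref{notate}), each coordinate of $ba-ab$ is a polynomial (in fact linear) expression in $\la_1,\dots,\la_n$. Setting all these coordinates to zero, as $a$ ranges over all of $A$, defines a closed subset $Z \sub B$ by a (possibly infinite, but that is harmless) family of polynomial relations. By construction $Z \cap A = \Cent{A}$, since an element of $A$ lies in $Z$ iff it commutes with every element of $A$. Since $A = \cl{A}$ is \Zcd, and $Z$ is \Zcd, the intersection $Z \cap A = \Cent{A}$ is \Zcd\ by \Rref{interpol}(1) (any subset of a \Zcd\ space defined by polynomial relations is \Zcd).

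Actually one can do slightly better and avoid invoking an infinite family: since $A$ is spanned over $F$ by finitely many elements $a_1,\dots,a_m$ when... no, $A$ need not be finite dimensional over $F$, so in general the family of conditions is infinite, but the Zariski topology is perfectly happy with that. Alternatively, since $\cl{A}=A$ is a variety, it is Noetherian, so finitely many of the relations already cut out $\Cent{A}$; but this refinement is not needed.

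I do not expect a real obstacle here: the statement is, as the authors note just above, an illustration of the general philosophy rather than a deep fact. The only mild subtlety is making sure one argues on coordinates correctly, i.e.\ that "$ba = ab$" really does translate into polynomial relations on the coordinates of $b$; this is precisely the mechanism already established in \Pref{notate} and \Rref{polyf}, so I would simply cite that. A secondary point worth a sentence is that we want $\Cent{A}$ closed \emph{in $B$}, but by \Rref{interpol}(\ref{interpoliii}) closedness in $KA$ (or any intermediate $K$-space containing it) is the same thing, so there is no ambiguity.
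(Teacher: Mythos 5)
Your proof is correct and follows essentially the same route as the paper's: identify $\Cent{A}$ as the intersection of the closed set $A$ with a Zariski-closed set cut out by commutator conditions, translated into polynomial relations on coordinates via \Pref{notate}. The paper's one-line proof achieves a small economy by working with $B = KA$ (as is standard in that subsection), so that $c \in \Cent{A}$ iff $c$ commutes with a \emph{finite} $K$-base of $B$, giving finitely many linear relations; your version ranges over all $a \in A$ and yields an infinite but equally harmless family of relations.
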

\begin{proof}
An element of $A$ is in $\Cent{A}$ iff it commutes with a (finite)
base of $B$, so  we are done by~\Pref{notate}.
\end{proof}

\begin{prop}\Label{mat}
If $A$ is prime and \Zcd, then $A$ is a matrix algebra (either
over a finite field or over the algebraically closed field $K$).
\end{prop}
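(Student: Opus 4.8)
The plan is to dispose of the case $F$ infinite at once, and then, for $F$ finite, to make a sequence of reductions that force $A$ to be simple, at which point \Pref{simpA} yields the conclusion. When $F$ is infinite, $A=\cl{A}=KA$ by \Pref{finf}, so $A$ is a finite-dimensional prime algebra over the algebraically closed field $K$; being finite-dimensional (hence Artinian) and prime, $A$ is simple, so $A\cong\M[t](K)$. Thus we may assume henceforth that $F$ is finite.

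First I would arrange, using \Pref{choice}, that $A$ is maximally \Zcd\ in $B=KA$, so that every nonzero ideal of $B$ meets $A$ nontrivially. Since a prime ring is semiprime, $\Rad(A)=0$, and \Pref{radcl} gives $A\cap\Rad(B)=\Rad(\cl{A})=0$; by maximality, $\Rad(B)=0$, so $B=KA$ is semisimple and, as $K$ is algebraically closed, $B\cong\M[n_1](K)\times\dots\times\M[n_r](K)$. If $r\ge 2$, split $B=B_1\oplus B_2$ into two nonzero ideals: each meets $A$, so $A\cap B_1$ and $A\cap B_2$ are nonzero ideals of $A$ whose product lies in $B_1B_2=0$, contradicting primeness. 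Hence $r=1$ and $B=KA=\M[n](K)$.

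Next I would locate the center. Put $Z=\Cent{A}$; it is nonzero because $A$ is a prime PI-algebra, and \Zcd\ by \Lref{centcl}. Since every $z\in Z$ commutes with $KA=\M[n](K)$, the set $Z$ lies in the scalars $K\cdot 1_B$, where $1_B$ is the identity of $B=\M[n](K)$. As a nonzero \Zcd\ subset of the affine line $K\cdot 1_B$ (\Rref{interpol}) that is also a domain, $Z$ is a field $F_1$ — either a finite field or $K$ itself. The identity idempotent of $F_1$ lies in $K\cdot 1_B$, hence equals $1_B$ (the only nonzero idempotent there), so $1_B\in Z\subseteq A$; thus $A$ is unital with $\Cent{A}=F_1$ a field. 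Finally, any nonzero ideal $I\triangleleft A$ meets $Z$ (a nonzero ideal of a semiprime PI-algebra meets its center), so $I$ contains a nonzero element of the field $F_1$, which is a unit; hence $I=A$. Therefore $A$ is simple, and $A=\cl{A}$ is a matrix algebra over a finite field or over $K$ by \Pref{simpA}.

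The ideal-splitting and the idempotent argument are routine. The step carrying the real weight is the reduction through \Pref{choice}: it is precisely what kills $\Rad(KA)$ and collapses $KA$ to a single matrix block $\M[n](K)$, which in turn confines $\Cent{A}$ to the scalars; without it neither of those can be arranged. The only ingredient not already established in the text is the standard PI fact that every nonzero ideal of a semiprime PI-algebra meets its center (in particular, that $\Cent{A}\ne 0$).
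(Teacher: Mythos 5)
Your proof is correct, and at its core it follows the same strategy as the paper: show $\Cent{A}$ is a field, invoke the PI-theoretic fact that a prime PI-algebra with central field is simple (equivalently, that nonzero ideals of a semiprime PI-ring meet the center), and then apply \Pref{simpA}. The paper's version is much terser: it simply asserts that $\Cent{A}$, being a \Zcd\ domain, must be finite or $K$, without elaboration, and does not pass through the maximality reduction of \Pref{choice} at all. What you add is exactly the justification of that assertion: by \Pref{choice} and \Pref{radcl} you force $KA$ to be semisimple, then by primeness to be a single matrix block $\M[n](K)$, which pins $\Cent{A}$ inside the scalar line $K\cdot 1_B$, where the dichotomy ``finite or all of $K$'' is visibly true (closed subsets of the affine line are finite or everything). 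Your separate, shorter disposal of the case $F$ infinite and the small observation that $1_B\in A$ are correct and tidy. So this is not a different proof, but a fuller one: you have reconstructed the hidden reduction behind the paper's one-line claim about $\Cent{A}$, and that reconstruction is sound.
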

\begin{proof}
We choose $B=KA$. But then $\Cent{A}$ is a \Zcd\ domain and must
be either finite (and thus a field) or $K$ itself. Hence
$\Cent{A}$ is a field, so $A$ is a prime PI-algebra whose center
is a field, implying $A$ is simple, so we are done by
\Pref{simpA}.
\end{proof}

\begin{thm}\Label{semis}
Suppose $A$ is semiprime and \Zcd. Then $A$ is semisimple, namely
isomorphic to a direct product of matrix algebras over fields.
\end{thm}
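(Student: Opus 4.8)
The plan is to reduce to the prime case already handled in Proposition~\ref{mat}. Since $A$ is a PI-algebra contained in $\M[n](K)$, it is Noetherian-like in the sense that it cannot contain an infinite direct sum of nonzero ideals; more to the point, by the theorem of Wedderburn cited before Proposition~\ref{radcl}, every nil ideal of $A$ is nilpotent of index at most $n$, and one knows (e.g.\ from the structure theory of semiprime PI-algebras, \cf~\cite[Theorem~1.6.27]{Row2}) that a semiprime PI-algebra has only finitely many minimal primes $P_1,\dots,P_r$ with $\bigcap P_i = 0$. So the first step is to show each quotient $A/P_i$ is again \Zcd\ in a suitable matrix algebra: indeed each $P_i$ is an ideal of $A$, and by Proposition~\ref{choice} we may take $B = KA$ and then $P_i = A \cap Q_i$ for an appropriate ideal $Q_i \triangleleft B$ (taking $Q_i$ maximal among ideals of $B$ meeting $A$ inside $P_i$), whence by Corollary~\ref{crucial1} the algebra $A/P_i$ embeds as a \Zcd\ subalgebra of $B/Q_i$.

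Next, each $A/P_i$ is prime (as $P_i$ is prime) and \Zcd, so by Proposition~\ref{mat} it is a matrix algebra, either over a finite field or over $K$. The final step is to assemble these pieces: the natural map
\[
\varphi \co A \lra \prod_{i=1}^r A/P_i
\]
is injective because $\bigcap P_i = 0$, and it is a morphism of varieties (each coordinate is a projection, hence polynomial in the structure constants). Since $A$ is \Zcd, Proposition~\ref{alggroup} (or its Corollary, applied to $\varphi$ viewed as a morphism of the additive algebraic groups, noting the image is also closed under multiplication) tells us that $\varphi(A)$ is closed in $\prod A/P_i$. Now $\prod A/P_i$ is itself a finite product of matrix algebras over fields, hence a semisimple finite-dimensional-over-center algebra; $\varphi(A)$ is an $F$-subalgebra of it containing — I claim — the whole product. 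To see surjectivity I would use that $\varphi(A)$ is a subdirect product which is \Zcd\ and meets each factor in a nonzero ideal (the kernel of the other projections intersected with $\varphi(A)$ is nonzero since the $P_i$ are distinct minimal primes); by simplicity of each $A/P_i$ this forces $\varphi(A)$ to contain each factor, so $\varphi$ is onto and $A \cong \prod_{i=1}^r A/P_i$ is a direct product of matrix algebras over fields, as asserted.

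The main obstacle I anticipate is the surjectivity/subdirect-product step: showing that a \Zcd\ subdirect product of simple matrix algebras is the full product. Over an algebraically closed base this is the classical argument that a subalgebra of $\prod M_{t_i}(K)$ projecting onto each factor and onto each pair of factors must be everything (Chinese Remainder / Jacobson density), but here some factors are matrix algebras over \emph{finite} fields, so one must be careful that the "gluing" phenomenon of Example~\ref{basexa2} cannot occur — which is exactly where semiprimeness and the fact that the $P_i$ are \emph{all} the minimal primes (so there is no room for a twisted diagonal) must be used. An alternative, possibly cleaner route avoiding this: run a direct induction on $\dim_K KA$, splitting off one minimal ideal of $A$ at a time using Remark~\ref{interpol}(3) and Proposition~\ref{mat}, and checking at each stage that the complementary ideal is again \Zcd.
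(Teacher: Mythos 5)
The plan of reducing to the prime case via \Pref{mat} and then reassembling is the right idea, but there is a genuine gap in your lifting step. You choose $Q_i \triangleleft B$ maximal with $Q_i \cap A \subseteq P_i$ and then assert $P_i = A \cap Q_i$; that is a lying-over property for the extension $A \subseteq B = KA$, and it is not automatic (this extension is neither integral nor central nor of any standard type for which lying over is known). Maximality only gives the inclusion $Q_i \cap A \subseteq P_i$; the reverse inclusion is precisely what you would have to prove, and \Pref{choice} --- which concerns the algebra being maximally \Zcd, not the lifting of prime ideals --- does not supply it. The appeal to ``a semiprime PI-algebra has finitely many minimal primes'' is also heavier machinery than the situation warrants, and the cited reference does not obviously cover an $A$ that is not Noetherian (as \Zcd\ algebras generally are not).

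The paper's proof avoids the lifting problem by running the argument in the opposite direction: it starts from the structure of $B$ and deduces ideals of $A$, rather than starting from ideals of $A$ and trying to push them up to $B$. By \Pref{choice} one assumes $A$ maximally \Zcd\ in $B = KA$; semiprimeness of $A$ is then used at once to force $\Rad(B) = 0$, because a nonzero $\Rad(B)$ would meet $A$ in a nonzero nilpotent ideal. Hence $B = S_1 \times \cdots \times S_t$ with each $S_i$ simple, the projections $A_i$ of $A$ into $S_i$ are \Zcd\ by \Cref{crucial} and prime by a short argument, and so are matrix algebras by \Pref{mat}. Writing $P_i \triangleleft B$ for the kernel of $B \twoheadrightarrow S_i$, the ideals $P_i \cap A$ are maximal in $A$ (the quotients being simple) with $\bigcap_i (P_i \cap A) = 0$, and since distinct maximal ideals are pairwise comaximal, the Chinese Remainder Theorem gives $A \cong \prod A_i$. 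Thus the finiteness comes for free from $\dim_K B < \infty$, no lifting of primes is needed, and the surjectivity/subdirect-product worry you raise is resolved by comaximality rather than by the more delicate ideal-intersection argument you sketch. Your closing alternative --- induction on $\dim_K KA$, splitting off one simple component at a time via \Rref{interpol}(3) --- is in fact much closer in spirit to what the paper does.
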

\begin{proof}
By \Pref{choice}, we may assume $A$ is maximally closed in $B =
KA$.
But $\Rad (B)$ is a nilpotent ideal, so would intersect $A$ at a
nilpotent ideal, contrary to hypothesis unless $\Rad (B) = 0$.
Hence $B = S_1 \times \dots \times S_t$ is a direct product of
simple $K$-algebras. By \Cref{crucial}, the projection $A_i$ of
$A$ into $S_i$ is \Zcd. Furthermore, the  $A_i$ are prime, since
otherwise, taking nonzero ideals $I_1, I_2$ of $A_i$ with $I_1I_2
= 0$, we have $(I_1K)(I_2K) = 0$ in $S_i$, contrary to $S_i$ prime.
But then, by Proposition~\ref{mat}, $A_i$ is a matrix algebra over
a field. Writing $S_i = B/P_i$ for maximal ideals $P_i$ of $B$, we
have $A_i \approx A/(P_i\cap A)$, implying $P_i \cap A$ are
maximal ideals of $A$, with $\bigcap _{i=1}^t (P_i \cap A) = 0$.
Hence $A$ is semisimple.\end{proof}

\begin{cor}
If $A$ is \Zcd then $\Rad (A)$ is also the Jacobson   radical of $A$.
\end{cor}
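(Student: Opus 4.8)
The plan is to deduce this from \Tref{semis} applied to the quotient $A/\Rad(A)$. Since $A$ embeds in a matrix algebra, every nil ideal of $A$ is nilpotent (by the theorem of Wedderburn cited above), so $\Rad(A)$ is a well-defined nilpotent ideal; being nilpotent it is quasi-regular, whence $\Rad(A)\sub J(A)$, where $J(A)$ denotes the Jacobson radical. So it remains to prove the reverse inclusion $J(A)\sub\Rad(A)$.

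First I would reduce to the case $B=KA$: by \Rref{interpol}\eq{interpoliii}, $\cl[B]{A}=\cl[KA]{A}$, so a \Zcd\ algebra $A$ is automatically \Zcd\ in $KA$. Next, $A/\Rad(A)$ is semiprime --- a nonzero nilpotent ideal of $A/\Rad(A)$ would pull back, using that $\Rad(A)$ is itself nilpotent, to a nilpotent ideal of $A$ strictly larger than $\Rad(A)$, a contradiction --- and it is \Zcd\ in $B/\Rad(B)$ by \Pref{radgood}. Therefore \Tref{semis} applies and shows that $A/\Rad(A)$ is semisimple, i.e.\ a finite direct product of matrix algebras over fields; in particular $J(A/\Rad(A))=0$.

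Finally, the image of $J(A)$ under the canonical projection $A\to A/\Rad(A)$ is a quasi-regular ideal (quasi-regularity is preserved by ring homomorphisms), hence contained in $J(A/\Rad(A))=0$; that is, $J(A)\sub\Rad(A)$. Together with the first paragraph this yields $J(A)=\Rad(A)$, as desired. The argument is essentially formal once \Tref{semis} is available; the only points needing care are the verifications that $A/\Rad(A)$ remains both semiprime and \Zcd, and the standard fact that membership in the Jacobson radical descends to homomorphic images.
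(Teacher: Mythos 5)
Your proof is correct and follows the same route as the paper: show $A/\Rad(A)$ is semiprime and \Zcd, apply \Tref{semis} to conclude it is semisimple, and deduce $J(A)=\Rad(A)$ from the standard nil/quasi-regular facts. You have merely filled in the details (the reduction to $B=KA$, the citation of \Pref{radgood}, the two inclusions) that the paper's one-line proof leaves implicit.
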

\begin{proof} %
$A/\Rad (A)$ is semiprime, and thus semisimple, implying $ \Rad
(A)$ is also the Jacobson radical.
\end{proof}

Let us recall some technical ring-theoretic results from
\cite{Row2}. Any nil ideal  is idempotent-lifting, by
\cite[Corollary 1.1.28]{Row2}. An algebra $A$ is {\bf semiperfect}
when $\Rad (A)$ is nil and $A/\Rad (A)$ is semisimple, so we
instantly have the following result:

\begin{cor} %
Any  \Zcd\ algebra  $A$ is
semiperfect.%
\end{cor}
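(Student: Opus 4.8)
The plan is to observe that this corollary is essentially immediate from the preceding results, so the proof is a matter of assembling the pieces already in hand. First I would recall the definition of semiperfect just stated: an algebra $A$ is semiperfect when $\Rad(A)$ is nil and $A/\Rad(A)$ is semisimple. So there are exactly two conditions to verify for an arbitrary \Zcd\ algebra $A$.

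For the first condition, I would invoke the discussion immediately preceding \Pref{radcl}: for any subalgebra $A$ of a matrix algebra $\M[n](K)$, every nil ideal is nilpotent (Wedderburn), and $\Rad(A)$ was \emph{defined} as the unique largest nil (hence nilpotent) ideal of $A$. In particular $\Rad(A)$ is nil, which is all we need here. For the second condition, I would apply \Tref{semis}: the quotient $A/\Rad(A)$ is semiprime by construction (one kills the largest nil ideal), and $A/\Rad(A)$ is again \Zcd\ --- this is exactly the content of \Pref{radgood}, after using \Pref{choice} to arrange that $B = KA$ so that $\Rad(B) = \Rad(KA)$ and $A \cap \Rad(B) = \Rad(A)$ by \Pref{radcl}. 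Then \Tref{semis} applies to the \Zcd\ semiprime algebra $A/\Rad(A)$ and gives that it is semisimple.

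Putting these together: $\Rad(A)$ is nil and $A/\Rad(A)$ is semisimple, so $A$ is semiperfect by definition. I would then remark, as the authors already hint, that combined with the fact that nil ideals are idempotent-lifting (\cite[Corollary 1.1.28]{Row2}), this means idempotents of $A/\Rad(A)$ lift to $A$, which is the usual ring-theoretic payoff of semiperfectness; but strictly speaking that observation is not needed for the corollary statement itself.

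I do not expect any genuine obstacle here --- the only mild subtlety is the bookkeeping around which ambient algebra $B$ is used when invoking \Pref{radgood} and \Tref{semis}, since those are phrased for $A$ \Zcd\ in $B = KA$. The clean way to handle this is to note that semiperfectness is an intrinsic ring-theoretic property of $A$, so we are free to replace $B$ by $KA$ (as remarked just before \Lref{centcl}), and by \Pref{choice} we may even take $A$ maximally \Zcd\ in $KA$; after that the hypotheses of \Pref{radgood} and \Tref{semis} are met verbatim. Everything else is a one-line citation, so the proof given in the excerpt --- ``$A/\Rad(A)$ is semiprime, and thus semisimple, implying $\Rad(A)$ is also the Jacobson radical'' for the preceding corollary, then ``we instantly have the following result'' for this one --- is exactly right.
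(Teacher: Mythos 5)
Your proposal is correct and follows the paper's argument: the paper simply records the definition of semiperfect and observes that the two conditions ($\Rad(A)$ nil, $A/\Rad(A)$ semisimple) are already in hand from the preceding definitions and \Tref{semis}. The one minor deviation is that for the reduction to $B = KA$ you cite \Pref{choice}, whereas the paper's subsequent proposition (that $A/\Rad(A)$ is \Zcd) cites \Rref{interpol}(\ref{interpoliii}) for this step; both suffice, since you only need $A$ to be \Zcd\ in $KA$ (maximal \Zcd{}ness is not actually required to invoke \Pref{radgood} or \Tref{semis} here).
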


\begin{prop}
If $A$ is \Zcd, then so is $A/\Rad(A)$.
\end{prop}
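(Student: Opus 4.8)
The plan is to synthesize the two propositions that immediately precede this statement, using the fact that being a Zariski-closed algebra is an intrinsic property of $A$: to prove it for $A/\Rad(A)$ it is enough to exhibit one embedding of $A/\Rad(A)$ as a Zariski-closed $F$-subalgebra of some finite dimensional $K$-algebra.

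First I would normalize the ambient algebra. By \Pref{choice} (or simply by \Rref{interpoliii} together with \Lref{Kcl2}, since the \Zcr\ of $A$ inside $B$ coincides with its \Zcr\ inside $KA$), we may assume $B = KA$, so that $B$ is a finite dimensional $K$-algebra in which $A$ is \Zcd. With this normalization in place, \Pref{radcl} gives $\Rad(A) = A \cap \Rad(B)$, so the natural map $A/\Rad(A) \to B/\Rad(B)$ is an injective homomorphism of $F$-algebras.

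Now \Pref{radgood} asserts precisely that $A/\Rad(A)$ is \Zcd\ in $B/\Rad(B)$. Since $B = KA$ is finite dimensional over $K$, so is the quotient $B/\Rad(B)$; hence $A/\Rad(A)$ is realized as a \Zcd\ $F$-subalgebra of a finite dimensional $K$-algebra, which is exactly the assertion that $A/\Rad(A)$ is a \Zcd\ algebra.

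I do not expect a genuine obstacle here: the substantive work was already carried out in establishing \Pref{choice}, \Pref{radcl}, and \Pref{radgood}, and the present statement is their immediate combination. The only point that must not be skipped is the reduction to $B = KA$: in an arbitrary ambient $B$ one need not have $\Rad(A) = A \cap \Rad(B)$, and $B/\Rad(B)$ need not detect the radical of $A$ correctly, so the normalization step via \Pref{choice} is used in an essential way.
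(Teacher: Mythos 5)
Your proposal is correct and follows essentially the same route as the paper: reduce to the case $B = KA$ (the paper cites \Rref{interpol}(\ref{interpoliii}) for this reduction) and then apply \Pref{radgood}. The extra commentary you give on why the normalization to $B=KA$ is essential is accurate but not a genuinely different argument.
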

\begin{proof}
We may assume $B = KA$ (\Rref{interpol}(\ref{interpoliii})) and
then apply Proposition~\ref{radgood}.
\end{proof}

We can now find an analog to the Krull-Schmidt theorem.

\begin{thm}\Label{KrullShexp2} If $A$ is \Zcd, then
there is a direct sum decomposition $A = \bigoplus_{i=1}^t Ae_i$ of
$A$ into indecomposable modules, and this decomposition is unique
up to isomorphism and permutation of components.\end{thm}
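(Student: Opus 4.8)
The plan is to reduce the statement to the classical Krull--Schmidt theorem for modules over a semiperfect ring, which is available once we know enough about $A$ as a ring. We have already established (Corollary after Theorem~\ref{semis}, and the subsequent corollary) that a \Zcd\ algebra $A$ is semiperfect: $\Rad(A)$ is nil, hence idempotent-lifting, and $A/\Rad(A)$ is semisimple. First I would invoke the standard fact that over a semiperfect ring the regular module decomposes as a finite direct sum of indecomposable left ideals $A = \bigoplus_{i=1}^t Ae_i$, where $\{e_1,\dots,e_t\}$ is a $1$-sum set of \emph{primitive} orthogonal idempotents obtained by lifting a decomposition of $\bar 1$ in $A/\Rad(A)$ into primitive orthogonal idempotents (the latter exists because $A/\Rad(A)$ is a finite product of matrix algebras over fields, by Theorem~\ref{semis}). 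This gives existence of the decomposition into indecomposables.

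For uniqueness, the key point is that each summand $Ae_i$ has \emph{local} endomorphism ring: $\End_A(Ae_i) \cong e_iAe_i$, and since $e_i$ is primitive and $\Rad(A)$ is nil (so $\Rad(e_iAe_i) = e_i\Rad(A)e_i$ is nil with $e_iAe_i/\Rad(e_iAe_i)$ a division ring — it is $e_i(A/\Rad A)e_i$, which is a division ring precisely because $\bar e_i$ is primitive in the semisimple ring $A/\Rad(A)$), the ring $e_iAe_i$ is local. Once every indecomposable summand has local endomorphism ring, the Krull--Schmidt--Azumaya theorem applies verbatim and yields uniqueness of the decomposition up to isomorphism and permutation of the $Ae_i$. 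I would state this as the final step, citing the ring-theoretic form of Krull--Schmidt (\eg~\cite{Row2}) since all the needed inputs — finiteness of the decomposition, idempotent lifting, locality of the corner rings — have now been verified for \Zcd\ algebras.

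The main obstacle, and the only place where ``\Zcd'' is genuinely used rather than ``semiperfect,'' is ensuring that the decomposition is \emph{finite}, \ie\ that $t < \infty$. This is where I would lean on the embedding $A \subseteq B = \M[n](K)$: a set of nonzero orthogonal idempotents in $\M[n](K)$ has size at most $n$, so the $1$-sum set $\{e_1,\dots,e_t\}$ has $t \le n$, and the decomposition into indecomposables is automatically finite. (Equivalently, $A$ has finite uniform dimension as a module over itself because it sits inside a finite-dimensional algebra.) Everything else is a transcription of the classical finite-dimensional argument, with ``$\Rad(A)$ nilpotent'' (Wedderburn, as recorded before Proposition~\ref{radcl}) replacing the role of the Jacobson radical being nilpotent in the finite-dimensional case. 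No new geometric input beyond what gave semiperfectness is required; the Zariski-closedness has already done its work in Theorem~\ref{semis}.
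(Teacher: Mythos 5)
Your proof is correct and matches the paper's approach: the paper's entire proof is a citation of \cite[Lemma 2.7.18]{Row2} (Krull--Schmidt for semiperfect rings), relying on the just-established corollary that \Zcd\ algebras are semiperfect, and what you have written out is essentially the standard proof of that cited lemma. One small clarification: finiteness of the decomposition already follows from semiperfectness alone (since $A/\Rad(A)$ is semisimple Artinian, $\bar 1$ is automatically a finite sum of primitive orthogonal idempotents, which lift), so the appeal to the embedding $A \subseteq \M[n](K)$ to bound $t$ is not where \Zcd-ness is needed --- it has already done its work in establishing semiperfectness, as you note at the end.
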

\begin{proof}
By \cite[Lemma 2.7.18]{Row2}, since $A$ is semiperfect.
\end{proof}

Our main structural result is an analog of Wedderburn's Principal
Theorem, \Tref{Wed2}, which played such a crucial role in Kemer's
proof of Specht's conjecture in characteristic $0$. This result is
the version that we need in characteristic $p$.

\begin{thm}\Label{Zarcl1} If $A = \cl{A}$, then $A$ has a Wedderburn
decomposition  $A = S\oplus J$, where $J = \Rad(A)$ and $S \cong
A/J$ is a subalgebra of $A$.\end{thm}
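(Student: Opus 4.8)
The plan is to reduce to the case where $F$ is finite (the infinite case being trivial, since then $A = KA$ is finite dimensional over an algebraically closed field and \Tref{Wed2} applies directly), and then to mimic the classical cohomological/inductive proof of Wedderburn's Principal Theorem, but carrying out each step inside the \Zcr\ so that the subalgebra $S$ we produce is itself \Zcd. First I would set $J = \Rad(A)$, which by \Pref{radcl} and the discussion following it is the (nilpotent) Jacobson radical, and recall from the corollaries above that $A$ is semiperfect with $A/J$ semisimple; by the previous proposition $A/J$ is again \Zcd, and by \Tref{semis} it is a finite direct product of matrix algebras over (finite or algebraically closed) fields. The goal is to lift a copy of $A/J$ back into $A$ as a \Zcd\ subalgebra $S$ with $A = S \oplus J$ as $F$-spaces.

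The key steps, in order: (1) Choose $B = KA$ and, by \Pref{choice}, assume $A$ is maximally \Zcd\ in $B$; apply classical Wedderburn (\Tref{Wed2}) to the finite dimensional $K$-algebra $B$ to get $B = \bar S \oplus \Rad(B)$ with $\bar S \cong B/\Rad(B)$ a $K$-subalgebra. (2) Reduce modulo $\Rad(B)^k$ by induction on the nilpotence index, so that it suffices to lift across a single square-zero ideal $N = \Rad(B)/\Rad(B)^k$ of $\bar B = B/\Rad(B)^k$; here the obstruction to lifting an algebra splitting of $\bar B/N$ into $\bar B$ lives in $H^2$ of the relevant algebra with coefficients in $N$, and this group vanishes because $\bar B/N$ is separable over the perfect (indeed finite or algebraically closed) field — so a $K$-splitting exists, consistent with Step (1). (3) The real content is to arrange this splitting to restrict to $A$: using that $A$ is maximally \Zcd\ in $B$, that $A \cap \Rad(B) = \Rad(A) = J$ by \Pref{radcl}, and that $A/J$ is \Zcd\ in $B/\Rad(B)$ by \Pref{radgood}, I would lift idempotents of $A/J$ to idempotents of $A$ (nil ideals are idempotent-lifting, by the cited \cite[Corollary 1.1.28]{Row2}), obtaining a $1$-sum set of orthogonal idempotents, and then build the matrix units and the generators of the field components of $A/J$ step by step inside $A$, at each stage using \Cref{crucial}/\Cref{mapp} to see that the partial lift generates a \Zcd\ subalgebra. (4) Finally, set $S$ to be the \Zcd\ subalgebra of $A$ so generated; then $S \to A/J$ is a surjection of \Zcd\ algebras with kernel $S \cap J$ nilpotent, and a dimension/projectivity count (or: $S$ maps onto $A/J$ and $S \cap J$ is both nilpotent and, by the lifting, zero) gives $S \cong A/J$ and $A = S \oplus J$.

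The main obstacle I expect is Step (3): ensuring that the lifted semisimple subalgebra actually sits inside $A$ and not merely inside $B = KA$, \emph{and} that it is \Zcd. The difficulty is genuinely characteristic-$p$/finite-field in nature — the $K$-splitting $\bar S$ of $B$ from classical Wedderburn need not meet $A$ well (the earlier example $A = \{\smat{a}{0}{0}{a^p}\}$ shows $KA$ can be much larger and have a different Wedderburn picture than $A$), so one cannot simply intersect $\bar S$ with $A$. The maximality hypothesis ``$A$ maximally \Zcd\ in $B$'' is exactly what is designed to prevent this pathology, and the inductive lifting of idempotents and matrix units — rather than a single cohomology-class argument over $K$ — is what keeps everything inside $A$ while preserving Zariski-closedness via the corollaries to \Pref{alggroup}. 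A secondary technical point is handling the field (as opposed to matrix) parts of the simple components of $A/J$ over a finite base field: one must lift a generator of a finite field extension $F_1/F$ into $A$, which again uses idempotent-lifting together with the Frobenius-type polynomial relations discussed in \Sref{sec:3}, but this is routine once the idempotent skeleton is in place.
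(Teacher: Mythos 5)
Your plan is considerably more elaborate than the paper's proof, and parts of it pursue goals the theorem does not actually ask for. The paper's entire argument is two lines: cite the ``split semisimple'' case of Wedderburn's Principal Theorem from \cite[Theorem~2.5.37(Case~I)]{Row2}, which holds over an \emph{arbitrary} base field provided $A/\Rad(A)$ is a finite direct product of matrix rings over fields, and then check this hypothesis — $A/J$ is \Zcd\ by \Pref{radgood}, hence a product of matrix algebras over fields by \Tref{semis}. No passage to $B=KA$, no cohomology, no lifting of matrix units, and in particular no requirement that $S$ be \Zcd. You correctly identify the decisive fact (that $A/J$ is split semisimple, via \Tref{semis} applied to the \Zcd\ quotient), but you then set out to \emph{re-prove} the relevant case of Wedderburn's Principal Theorem instead of citing it, and you layer onto it a goal the statement never asks for, namely that $S$ itself be \Zcd.

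Two aspects of your sketch are worth flagging. First, Steps (1)--(2), where you pass to $B=KA$, apply classical Wedderburn over the algebraically closed field $K$, and invoke vanishing of $H^2$, are a red herring: as you yourself observe, the $K$-splitting $\bar S$ of $B$ need not interact well with $A$ (your own example $A=\{\smat{a}{0}{0}{a^p}\}$ shows this), and nothing in your Step (3) actually uses the $K$-splitting produced in Steps (1)--(2). The paper never looks at $B$'s Wedderburn decomposition at all. Second, Step (3) — lifting idempotents, then matrix units, then finite-field generators inside $A$ — is exactly the classical proof of the split case of Wedderburn's Principal Theorem, so you would succeed, but you are then reconstructing a textbook theorem rather than using it; and the closing ``dimension/projectivity count'' that is supposed to give $A=S\oplus J$ is left too vague to stand as a proof (the standard endpoint is simply that the lifted subalgebra $S$ maps isomorphically onto $A/J$ since $S\cap J$ is both lifted-idempotent-free and contained in a nil ideal, hence zero). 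So: no gap in the sense of a wrong turn, but your route is a large detour around a citation, and the extra \Zcd\nobreakspace goal for $S$ is not part of what is to be proved.
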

\begin{proof}
By Wedderburn's Principal Theorem
\cite[Theorem~2.5.37(Case~I)]{Row2} it is enough to prove $A/J$ is
split semisimple. But $A/J$ is \Zcd\, by \Pref{radgood}, so we are
done by \Tref{semis}.
\end{proof}

\subsection{Subdirect decompositions of \Zcd\  algebras}

\begin{rem}\Label{subirr} Suppose $B$ is a subdirect product of $K$-algebras
$B_1$ and $B_2$. Since annihilator ideals can be defined through
polynomial relations, a \Zcd\ subalgebra $A$ of $B$ is a subdirect
product of \Zcd\ algebras, and arguing by induction on
$\dimcol{B}{K}$, we may conclude that any \Zcd\ algebra $A$ is a
finite subdirect product of \Zcd\ algebras whose \fcr{}s are
subdirectly irreducible.

Of course, if $A$ is the subdirect product of $A_1, \dots, A_m$,
then $$\id (A) = \id (A_1 \times \cdots \times A_m) = \bigcap
_{i=1}^m \id (A_i),$$
thereby reducing the study of $\id(A)$ to
the subdirectly irreducible case.
\end{rem}

Let us summarize what we have done so far and indicate what is
still missing. Suppose $A$ is \Zcd\ with \fcr\ $B$. By
Wedderburn's Principal Theorem, \Tref{Wed2}, we can write $B= S
\oplus J$, where $S$ is the semisimple part and $J$ is the radical
part, and we may assume that $B$ is subdirectly irreducible and
has block triangular form, so $A$ involves the same nonzero
components. Furthermore, $A/J$ is semisimple and thus a direct sum
of central simple algebras. We can write $A$ in upper triangular
form. On the other hand, we do not yet have a good description of
the relations among the components; these are treated in the next
section, in particular Theorem~\ref{linear}.

\section {Types of polynomial relations of a \Zcd\
algebra}\Label{sec:3}

As before, we study an $F$-algebra $A$ contained in an
$n$-dimensional $K$-algebra $B$, where $F \sub K$ are fields.
Since PI-theory deals so extensively with the $T$-ideal $\id (A)$
of identities of an algebra $A$, it is reasonable to expect that
the ideal of polynomial relations will play an important role in
our analysis of \Zcd\ algebras in $B$.
\begin{defn}
For an $F$-algebra $A$ contained in a $K$-algebra $B$ with basis
$b_1,\dots,b_n$, $\pol(A) \normali K[\la_1, \dots, \la _n]$ is
defined as $\pol(A) = \set{f \suchthat f(A) = 0}$.

\end{defn}

Our next objective is to find the ``best'' generators of the
polynomial relations. This is a major issue, taking much of the
remainder of this paper.

Unlike the \Zcr\ (\cf~\Rref{indep}), $\pol(A)$ does depend on the
choice of a base for $B$. In fact, the general linear group
$\GL[n](K)$ acts on bases of $B$ by linear transformations and on
polynomials (and ideals of polynomials) by left composition. {}From this point of view, relations are studied up to the action of $\GL[n](K)$
on ideals of $K[\lam_1,\dots,\lam_n]$, and we may simplify the
relations by proper choice of the base.

In some ways, although polynomial relations generalize polynomial
identities, their ideals are easier to study than $T$-ideals,
since we view them in a much more manageable algebra, the
commutative algebra $K[\la_1, \dots, \la _n]$.

\begin{rem}\Label{rem0}
Some initial remarks in studying $\pol (A)$ are as follows:

\begin{enumerate}
\item \Label{rem0i}
We may assume $A$ has some element $a = \sum \a _i b_i$ with
$\a _n \ne 0;$ otherwise we have the polynomial relation $\la_n$
that we can use to eliminate all monomials which include $\a _n$.

\item Since $A$ is a group under addition, we know that $0 \in A$,
so $f(0) = 0$ for all polynomial relations $f$ of $A$. In other
words, the only polynomials in $K[\la_1, \dots, \la _n]$ that we
consider are those having  constant term $0$.
\end{enumerate}
\end{rem}

\begin{rem}\Label{finmany}
$\pol (A)$, being an ideal of the Noetherian ring $K[\la_1, \dots,
\la _n]$, is finitely generated. Thus all the polynomial relations
of $A$ are consequences of finitely many polynomial relations.
\end{rem}

{}Thus,  Specht's problem becomes trivial for polynomial
relations. For example,  the matrix algebra  $M_n(F)$, viewed as an
$n^2$-dimensional affine space  over the field $F$, satisfies the
polynomial relations $\la_i^q-\la_i$ iff $F$ is finite and
satisfies the identity $x^q -x$.

\subsection{Additively closed Zariski-closed sets}

Next, we adapt the well-known theory of multilinearization. Since
this uses the additive structure, we focus the next investigation
to this case.

\begin{rem}{\ }
\begin{enumerate}
\item An additive group $A \sub K^{(n)}$ acts on its set of relations $\pol(A)$
via translation: $a \co f(\lam) \mapsto f(\lam+a)$.
\item If $A$ is an $F$-space, $\mul{F}$ acts on $\pol(A)$ via
scaling: $\alpha \co f(\lam) \mapsto f(\alpha \lam)$.
\end{enumerate}
\end{rem}

\begin{defn} A polynomial $f(\la_1, \dots, \la _n)\in K[\la_1, \dots, \la _n]$ is
{\bf quasi-linear} (with respect to $A$) if $$f(\la +a)=f(\la)+
f(a), \qquad \forall a \in A.$$

 Also,   $f$ is {\bf
$F$-homogeneous}  if there is $d\in \N ^+$ such that, for each $\a
\in F$,
$$f(\a \la_1, \dots, \a  \la _n) =
 \a ^d  f(\la_1, \dots,   \la _n).$$
\end{defn}

\begin{rem}
{\ }
\begin{enumerate}
\item A quasi-linear polynomial (with respect to any $A$)
necessarily has zero constant term, for $f(\lam) = f(\lam + 0) =
f(\lam)+f(0)$. \item Over an infinite field, the quasi-linear
polynomials  are linear. However, note that $x^p-x$ is a
non-linear but quasi-linear $\F_p$-homogeneous polynomial.
\end{enumerate}
\end{rem}

\begin{prop}\Label{TR}
{\ }
\begin{enumerate}
\item Suppose $A$ is an additive group. The ideal of polynomial
relations of $A$ is generated by quasi-linear polynomial
relations.

\item \Label{TRii} If $A$ is an $F$-vector space, the ideal of
polynomial relations of $A$ is generated by  quasi-linear
$F$-homogeneous polynomial relations.\end{enumerate}
\end{prop}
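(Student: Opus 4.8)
The idea is to mimic the classical multilinearization/homogenization argument, but carried out inside the commutative polynomial ring $K[\la_1,\dots,\la_n]$ rather than in a free algebra, and to keep track of the fact that the field $F$ may be finite. I will treat part (1) first. Given a polynomial relation $f \in \pol(A)$ of an additive group $A$, consider its \emph{second difference}
$$\Delta f(\la,a) \;=\; f(\la+a) - f(\la) - f(a),$$
viewed for each fixed $a \in A$ as a polynomial in $\la$. Since $A$ is additive and $f$ vanishes on $A$, each $\Delta f(\cdot,a)$ is again an element of $\pol(A)$, and it has strictly smaller $\la$-degree than $f$ (the top-degree part of $f(\la+a)$ in $\la$ equals that of $f(\la)$). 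So by induction on $\deg_\la f$ we may assume $\Delta f(\cdot,a) \in \pol(A)$ lies in the ideal generated by quasi-linear relations. The remaining task is to show that $f$ itself differs from a quasi-linear relation by something already in that ideal; equivalently, that $f$ modulo the ideal generated by the $\Delta f(\cdot,a)$ is quasi-linear. But that is immediate: if all second differences $\Delta f(\cdot,a)$ are in the ideal $\mathfrak{q}$ generated by quasi-linear relations, then $f(\la+a) \equiv f(\la) + f(a) \pmod{\mathfrak q}$ for all $a\in A$, which says precisely that the image $\bar f$ in $K[\la]/\mathfrak q$ satisfies the quasi-linearity functional equation — but I need $f$ itself, not its image, to be a combination of quasi-linear relations. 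The clean way around this is to run the induction so that it outputs an actual representation: $f = (\text{quasi-linear part}) + \sum g_i \Delta f(\cdot,a_i) + (\text{lower-order corrections})$, and invoke the inductive hypothesis on each $\Delta f(\cdot,a_i)$ and on any leftover lower-degree relation. I would phrase this as: the subideal $\mathfrak q \subseteq \pol(A)$ generated by all quasi-linear relations contains $\Delta f(\cdot,a)$ for every $a$, hence contains $f(\la+a)-f(a)-f(\la)$; iterating over a spanning set of $A$ and peeling off degrees, $f \in \mathfrak q$.

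**Homogenization for part (2).** Now assume $A$ is an $F$-vector space. If $F$ is infinite, the usual Vandermonde argument applies verbatim: writing $f = \sum_{d} f_d$ with $f_d$ the $F$-homogeneous component of degree $d$ (i.e. $f_d(\alpha\la) = \alpha^d f_d(\la)$), the relation $f(\alpha a)=0$ for all $\alpha\in F$ and all $a\in A$ forces, by choosing sufficiently many distinct scalars $\alpha_0,\dots,\alpha_N$ and inverting the Vandermonde matrix $(\alpha_j^d)$, that each $f_d \in \pol(A)$ separately. Combined with part (1) this finishes the infinite case, since the quasi-linear relations obtained in (1) can themselves be decomposed into $F$-homogeneous pieces (and over an infinite field quasi-linear just means linear, so this is automatic). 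The genuinely delicate case is $F$ finite, say $|F| = q$: here the Vandermonde trick only separates degrees modulo $q-1$, because $\alpha^d = \alpha^{d \bmod (q-1)}$ for $\alpha\in F^\times$ (with the usual care at $\alpha=0$). So the decomposition by the $F$-action only yields, for each residue $r \bmod (q-1)$, a relation $\sum_{d\equiv r} f_d \in \pol(A)$ — \emph{not} the individual $f_d$. This is exactly the phenomenon witnessed by $x^p - x$: degrees $1$ and $p$ are glued together.

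**The main obstacle, and how to handle it.** The crux is therefore: over a finite field, "$F$-homogeneous" must be read in the refined sense just described (degree fixed modulo $q-1$, together with the correct behavior of the degree-$0$-in-$\alpha$ part, which is just the constant term and hence $0$ here), and one must show the ideal of relations is generated by quasi-linear relations that are $F$-homogeneous \emph{in that sense}. My plan is: (i) use the $F^\times$-action $\alpha\colon f(\la)\mapsto f(\alpha\la)$ to average $f$ against the characters of the cyclic group $F^\times$, obtaining a decomposition $f = \sum_{r=0}^{q-2} f^{(r)}$ where each $f^{(r)}(\alpha\la) = \alpha^r f^{(r)}(\la)$ and each $f^{(r)}\in\pol(A)$ — this is legitimate because $|F^\times| = q-1$ is invertible in $K$ (char $K = p \nmid q-1$), so the idempotents $\tfrac{1}{q-1}\sum_\alpha \alpha^{-r}[\alpha\cdot]$ exist in the group algebra acting on $K[\la]$; (ii) each $f^{(r)}$ is $F$-homogeneous in the required sense; (iii) now apply the quasi-linearization of part (1) to each $f^{(r)}$, checking that the second-difference operation $\Delta(\cdot,a)$ commutes with the $F^\times$-averaging (it does: $\Delta f^{(r)}(\alpha\la,\alpha a) = \alpha^r \Delta f^{(r)}(\la,a)$ since $\alpha a\in A$), so the inductive output stays within the $r$-homogeneous component. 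Thus the quasi-linear generators produced are automatically $F$-homogeneous. The one point requiring care is that $\Delta$ lowers $\la$-degree but a priori could mix the homogeneity index $r$; the computation in (iii) shows it does not, because $a$ and $\alpha a$ both lie in $A$ and the substitution $\la\mapsto\alpha\la$, $a\mapsto\alpha a$ scales $\Delta f^{(r)}$ by $\alpha^r$. With that commutation in hand, the double induction (on $\deg_\la f$, with the $F^\times$-isotypic decomposition applied at the top) closes, and both statements follow.
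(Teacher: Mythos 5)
Your part (1) follows essentially the same route as the paper: the difference operator $\Delta_a f(\la)=f(\la+a)-f(\la)$, Noetherianity of $K[\la_1,\dots,\la_n]$, and induction on degree. You are right to be uneasy about the step from ``$f$ is quasi-linear modulo the ideal $\mathfrak q$'' to ``$f\in\mathfrak q$''; the paper's own argument glosses over exactly the same point, arguing only that if every $\Delta_a f$ vanishes on $A$ then $f$ vanishes on $A$, which by itself gives $f$ in the \emph{radical} of the ideal generated by the $\Delta_a f$, not in the ideal itself. So your sketch is at the same level of rigor as the printed proof here.

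Part (2) is where you take a genuinely different route. The paper defines $\triangledown f=f(\alpha\la_1,\dots,\alpha\la_n)-\alpha^d f(\la_1,\dots,\la_n)$ and inducts on the number of nonzero monomials, subtracting suitable multiples $\gamma\triangledown f$ to kill homogeneous layers one at a time; it is a hands-on iterative version of Lagrange interpolation over $F$. You instead invoke the isotypic decomposition under the $\mul{F}$-action, using the idempotents $\pi_r=\tfrac{1}{q-1}\sum_{\alpha\in\mul{F}}\alpha^{-r}[\alpha\cdot]$ in the group algebra $K[\mul{F}]$ acting on $K[\la_1,\dots,\la_n]$, valid because $\gcd(p,q-1)=1$. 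This is more conceptual, and it explains cleanly why the Vandermonde argument over a finite field only separates degrees modulo $q-1$ --- a point the paper leaves implicit.

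There is, however, a genuine error in your step (iii). The identity $\Delta f^{(r)}(\alpha\la,\alpha a)=\alpha^r\Delta f^{(r)}(\la,a)$ says that $[\alpha\cdot]$ sends $\Delta_{\alpha a}f^{(r)}$ to $\alpha^r\Delta_a f^{(r)}$, i.e.\ the $\mul{F}$-action permutes the \emph{family} $\{\Delta_a f^{(r)}\}_{a\in A}$ up to scalars; it does \emph{not} say that each individual $\Delta_a f^{(r)}$ lies in the $r$-isotypic component. A concrete counterexample: take $A=\F_p\oplus\F_p\sub K^{(2)}$ with $p\ge 3$ and $f=\la_1(\la_2^p-\la_2)$, which is $F$-homogeneous with exponent $r\equiv 2\pmod{p-1}$. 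A short computation gives $\Delta_{(a_1,a_2)}f=a_1(\la_2^p-\la_2)$, which is $F$-homogeneous with exponent $r\equiv 1$. So the isotypic index is not preserved by $\Delta$. The fix is painless: either re-apply $\pi_r$ to the $\Delta_a f^{(r)}$ at each inductive step (they still have lower degree, so the induction closes), or --- cleaner --- reverse the order of the two reductions: first quasi-linearize as in (1), then apply $\pi_r$, which \emph{does} preserve quasi-linearity, since for $a\in A$ one has $\alpha a\in A$ and hence
$$\pi_r g(\la+a)=\tfrac{1}{q-1}\sum_{\alpha\in\mul{F}}\alpha^{-r}\bigl(g(\alpha\la)+g(\alpha a)\bigr)=\pi_r g(\la)+\pi_r g(a).$$
With that reordering your proof plan is sound and arguably tidier than the paper's.
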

\begin{proof}
\begin{enumerate}
\item  Suppose $f\in \pol(A)$. Given any $a \in A$, we define the
new polynomial relation $\Delta_a f(\la) = f(\la +a)-f(\la)$. This
has smaller degree than $f$, and clearly is a consequence of $f$.

On the other hand, $f$ is a formal consequence of $\{\Delta_a f :
a \in A \}$. Indeed, since   $f$ has constant term $0$ (by
Remark~\ref{rem0}), we have $f(0) = 0$, and thus $$f(a) = f(a) -
f(0) =  \Delta_a f(0).$$ If $\Delta_a f(A) = 0$ for all $a$, this
implies $f(a) = 0$, so that $f(\la)$ is a polynomial relation of
$A$.

We can thus replace $f$ by finitely many $\Delta_a f$, in view of
Remark~\ref{finmany}, and repeating this process, eventually we
get $\Delta_af(\la) = 0$ for all $a \in A$, i.e., $f$ is
quasi-linear.

\item Given $\a_i \in F_i$, we can define $\triangledown f =
f(\a\la_1, \dots, \a \la _n) - \a^d f(\la_1, \dots,   \la _n)$,
where $d$ is the (total) degree of $f$. This provides a polynomial
relation with fewer nonzero monomials, as is
$f-\gamma\triangledown f$ (for suitable $\gamma \in K$, provided
$\triangledown f \ne 0)$. On the other hand, $f =
\gamma\triangledown f + (f-\gamma\triangledown f)$, so we continue
by induction,  unless $\triangledown f = 0$. But this means that
$f$ is $F$-homogeneous.
\end{enumerate}
\end{proof}

\begin{rem}\Label{add} {\ }
\begin{enumerate}
\item For an additive subgroup $V \sub K^{(n)}$ and a polynomial $f$
quasi-linear with respect to $V$, the intersection $V \cap Z(f)$
is a group, where $$Z(f) = \set{c \in K^{(n)} \,:\, f(c) = 0}$$ is
the variety associated to $f$. Indeed, if $f(a) = f(b) = 0$, then
$f(a+b) = 0$, and $f(-a) + f(a) = f(-a+a) = f(0)$, implying $f(-a)
= 0$.

\item  In particular, if $f$ is quasi-linear with respect to $K^{(n)}$,
then its variety is a group.

\item  The variety of an arbitrary quasi-linear $F$-homogeneous
polynomial relation $f$ is a vector space over $F$, since if $f(a)
=  0$, then $$f(\a a) = f(\a a_1, \dots, \a  a _n) =
 \a^{d} f(a_1, \dots,   a _n)=0.$$
\end{enumerate}
\end{rem}

Having reduced to quasi-linear (perhaps also $F$-homogeneous)
polynomial relations, we would like to determine their form.

\begin{defn}\Label{FTdef}
Suppose $\cha F = p$. Let $q = \card{F}$, setting $q = 1$ if $F$
is infinite.

\begin{enumerate}
\item\Label{FTi} A polynomial relation $f\in K[\la_1, \dots, \la
_n]$ is of {\bf weak Frobenius type} if $f$ has the following
form: \begin{equation}\label{Frobtyp}\sum _{i=1}^n \sum _{j\ge 1}
c_{ij} \lam_i^{q_{ij}} = 0,\end{equation} 
where $c_{ij} \in K$ and
each $q_{ij}$ is a $p$-power. (Recall that our polynomial
relations have constant term $0$.)

\item \Label{FTii} The polynomial relation $f$ is a \defin{weak
$F$-Frobenius type} (also known as a \defin{$q$-polynomial} in the
literature) if, in (\ref{FTi}), we may take each $q_{ij}$ to be a
power of $q$.
\end{enumerate}
\end{defn}

Note that weak $F$-Frobenius type (resp.~ weak Frobenius type)
reduces to the linear polynomial relation $\sum c_i \lam_i = 0$
for $F$ infinite (resp.~in characteristic $0$). In view of
Remark~\ref{add}, the next result (which strengthens Proposition
\ref{TR}) characterizes algebraic varieties that are Abelian
groups.

\begin{thm}\Label{linear}
{\ }
\begin{enumerate}
\item\Label{Li}
The ideal of polynomial relations of an additive group $A$ is
generated by polynomial relations of weak Frobenius type.
Specifically, any polynomial relation is a consequence of finitely
many polynomial relations of weak Frobenius type.

\item\Label{Lii} The ideal of polynomial relations of an
$F$-vector space $A$ is generated by polynomial relations of weak
$F$-Frobenius type. Specifically, any  quasi-linear
$F$-homogeneous polynomial relation is a consequence of finitely
many  polynomial relations of weak $F$-Frobenius type.
\end{enumerate}
\end{thm}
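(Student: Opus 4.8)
The plan is to reduce Theorem~\ref{linear} to Proposition~\ref{TR} and then analyze a single quasi-linear (and, in part~(\ref{Lii}), $F$-homogeneous) polynomial relation $f$, showing that such an $f$ is automatically of weak Frobenius type (resp.~weak $F$-Frobenius type). By Proposition~\ref{TR}, the ideal $\pol(A)$ is generated by quasi-linear polynomial relations (resp.~quasi-linear $F$-homogeneous ones), so it suffices to treat one such $f$; together with Remark~\ref{finmany} this will also yield the ``finitely many'' refinement. So fix $f\in K[\la_1,\dots,\la_n]$ with $f(\la+a)=f(\la)+f(a)$ for all $a\in A$, and constant term $0$.

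First I would observe that the additivity condition, when pushed to the Zariski closure, says that $f$ restricted to $\cl{A}$ is additive; and by Theorem~\ref{linear}'s hypothesis the variety of interest is $\cl{A}$ which is an algebraic group (by Remark~\ref{add} and the structure results of \Sref{s:zcr}), so without loss of generality I may assume $A$ itself is Zariski-closed, i.e.\ a closed additive subgroup (or $F$-subspace) of $K^{(n)}$. The heart of the matter is the following purely algebraic fact about additive polynomial maps in characteristic $p$: if $g(\la_1,\dots,\la_n)\in K[\la_1,\dots,\la_n]$ satisfies $g(\vec\la+\vec\mu)=g(\vec\la)+g(\vec\mu)$ \emph{as a formal identity} in $2n$ indeterminates over the infinite field $K$, then $g$ is an \textbf{additive polynomial}, hence (by the classical theory of additive polynomials over a field of characteristic $p$) of the form $\sum_{i,j} c_{ij}\la_i^{p^{e_{ij}}}$ — that is, of weak Frobenius type. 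Indeed, expanding $g(\vec\la+\vec\mu)-g(\vec\la)-g(\vec\mu)=0$ monomial by monomial forces every monomial of $g$ to be a single power of a single variable, and the characteristic-$p$ binomial coefficient vanishing $\binom{p^e}{k}\equiv 0$ for $0<k<p^e$ is what allows exponents that are $p$-powers to survive while all other exponents are killed. So the remaining gap is to go from ``$f$ is additive on the closed set $A$'' to ``$f$ agrees on $A$ with a formally additive polynomial.''

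That gap is closed as follows. Let $g(\vec\la)$ be the ``additive part'' of $f$: concretely, take $g$ to be the sum of those monomials of $f$ that are pure $p$-power monomials $c\,\la_i^{p^e}$ (including $e=0$, i.e.\ linear monomials), and set $h=f-g$. We must show $h\in\pol(A)$, which combined with $g$'s formal additivity gives the theorem. Consider $\Phi(\vec\la,\vec\mu)=f(\vec\la+\vec\mu)-f(\vec\la)-f(\vec\mu)$; by quasi-linearity $\Phi$ vanishes whenever $\vec\mu\in A$, i.e.\ for each fixed $\vec\mu\in A$ the polynomial $\Phi(\vec\la,\vec\mu)\in K[\vec\la]$ lies in $\pol(\cl{A})=\pol(A)$ — and by an induction on degree (exactly the multilinearization argument already used for Proposition~\ref{break} and Proposition~\ref{TR}) one extracts from the family $\{\Phi(\vec\la,\vec a)\}_{a\in A}$ that the non-$p$-power part $h$ of $f$ must already be a relation on $A$: each successive difference operator $\Delta_a$ strictly lowers degree, and a polynomial all of whose iterated differences on $A$ eventually vanish, and which has no pure-$p$-power monomials, must vanish on $A$ (here one uses that $f(0)=0$ to start the recursion, as in the proof of Proposition~\ref{TR}(1)). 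For part~(\ref{Lii}), one feeds in the $F$-homogeneity: if in addition $f(\alpha\vec\la)=\alpha^d f(\vec\la)$ for all $\alpha\in F=\F_q$, then each surviving monomial $c\,\la_i^{p^e}$ must satisfy $p^e\equiv d\pmod{q-1}$ in the exponent, which forces $p^e$ to be a power of $q$ after absorbing the Frobenius relations $\alpha^q=\alpha$ on $F$ into the coefficients — this is precisely where the $q$-polynomial structure comes from and where Subsection~\ref{ss:ff} on Frobenius automorphisms is used.

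The main obstacle I expect is the second paragraph's algebraic lemma on additive polynomials, specifically making the binomial-coefficient bookkeeping airtight in several variables: one must rule out ``mixed'' surviving monomials like $\la_1^{p^a}\la_2^{p^b}$, which are additive in each variable separately but not jointly, and the clean way to do this is to first restrict to one variable at a time (setting all but one $\la_i$ to $0$, legitimate since $0\in A$) to learn that $f(\la_i\vec e_i)$ is additive hence a sum of $p$-powers of $\la_i$, then use additivity again to show $f$ is the sum of these one-variable pieces with no genuinely mixed terms. A secondary obstacle is keeping the reduction ``formal identity versus identity on the closed set $A$'' honest when $A$ is a proper closed subgroup rather than all of $K^{(n)}$ — but this is handled entirely by the degree induction already in hand, since $\pol(A)$ is an honest ideal and the difference operators $\Delta_a$ preserve it.
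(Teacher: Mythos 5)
Your opening reduction to Proposition~\ref{TR} is the same as the paper's, and your second paragraph (a polynomial that is \emph{formally} additive in $2n$ indeterminates over an infinite field of characteristic~$p$ must be a sum of monomials $c\,\la_i^{p^e}$, i.e.\ an Ore additive polynomial) is a correct and classical fact. The gap is in the third paragraph, where you try to pass from ``$f$ is additive on the closed subgroup $A$'' to the desired conclusion, and this step does not work as stated.

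You decompose $f = g + h$ with $g$ the sum of the pure $p$-power monomials of $f$, and you claim that proving $h\in\pol(A)$ ``gives the theorem.'' It does not. Even if $h\in\pol(A)$ (so that $g = f-h$ is itself a relation, and a weak Frobenius one), you are left having to show that the nonzero relation $h$ — which by construction has \emph{no} pure $p$-power monomials — is a consequence of weak Frobenius relations. Your degree-induction does not touch this, because $\deg h$ can equal $\deg f$; the decomposition makes no progress. A concrete counterexample to your stopping point: take $p$ odd, $A = \F_q\subset K$ (so $n=1$), and $f = (\la^q-\la)^2 = \la^{2q} - 2\la^{q+1} + \la^2$. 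This $f$ is quasi-linear on $A$ and lies in $\pol(A)$, yet none of $2q$, $q+1$, $2$ is a $p$-power, so $g=0$ and $h=f$ — your reduction is the identity map, and you have learned nothing. The point is that a quasi-linear relation on a \emph{proper} closed subgroup $A$ need not be a $p$-polynomial at all; it need only be congruent to one modulo the other relations, and that congruence is exactly what you must produce. Relatedly, your claim that the iterated $\Delta_a$ argument proves $h\in\pol(A)$ is asserted rather than proved; the quasi-linearity condition $\Delta_a f\equiv 0$ is a system of polynomial constraints on the coefficients of $f$ parameterized by $a\in A$, and it is not evident that the $p$-power monomials can be cleanly separated from the rest by a single linear projection.

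The paper closes precisely this gap by a more careful Newton-difference argument: it orders monomials lexicographically, applies $\Delta_a$ with $\a_n\ne 0$, and observes that the forced cancellation of the leading surviving term yields a \emph{subsidiary} one-variable polynomial relation $g(\la_n)$ of weak Frobenius type, which is then added to the generating set and used to reduce $\deg_{\la_n} f$ before iterating. It is this peeling-off of side relations — absent from your argument — that makes the induction terminate and handles exactly the situation of the example above ($f=(\la^q-\la)^2$ is killed modulo the extracted relation $\la^q-\la$). Your proposal needs a mechanism of this kind: either show that $h$ is divisible in $K[\la_1,\dots,\la_n]$ by a weak Frobenius relation already known to hold, or replace the static decomposition $f=g+h$ by a genuine descent.

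Two smaller points. First, your suggested fix for ``mixed monomials'' — restrict to one coordinate axis at a time by setting the other $\la_i$ to $0$ — is not available in general, since $A$ need not contain any element supported on a single coordinate (e.g.\ $A = \set{(\a,\a^q):\a\in\F_{q^2}}$); the paper instead only assumes $\a_n\ne 0$ and works one variable at a time under the lex order. Second, for part~(\ref{Lii}) you invoke $F$-homogeneity to force $q$-powers, which is the right idea, but the paper realizes it via the operator $\triangledown$ of Proposition~\ref{TR}(\ref{TRii}) applied after part~(\ref{Li}), which is cleaner than the modular-arithmetic condition $p^e\equiv d\pmod{q-1}$ you sketch.
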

\begin{proof}
\begin{enumerate}
\item  It is enough to prove the second assertion. We write a
polynomial relation $f= \sum h_{{d_1, \dots, d_n}}$, where $h$ is
the monomial with multi-degree ${d_1, \dots, d_n}$, i.e., of the
form $c\la _1 ^{d_1}\cdots \la _n ^{d_n}$. Clearly $\Delta _a f=
\sum \Delta _a h_{{d_1, \dots, d_n}}$, so we consider a typical
monomial $$ h_{{d_1, \dots, d_n}} = c\la _1 ^{d_1}\cdots \la _n
^{d_n}.$$

Taking $a = \sum \a_i b_i$ with $\a_n\ne 0$
(\cf~Remark~\ref{rem0}(\ref{rem0i})), we have
$$\Delta _a (h) = c(\la _1+ \a_1) ^{d_1}\cdots (\la _n + \a
_n)^{d_n} -c\la _1 ^{d_1}\cdots \la _n ^{d_n},$$ so the highest
monomial not cancelled (under the lexicographic order giving
highest weight to $\la _1$) is $$d_n\a _n c \la _1 ^{d_1}\cdots
\la _n ^{d_n -1}.$$
But this must be $0$, so $  d_n\a _n c $ must be $0$ in $K$, i.e.
$p\,|\,d_n$, where $p = \cha (K)$. Continuing in this vein, we see
that the highest term in $\Delta _a (h)$ is
$$\a _n ^q c\binom {d_n}{q}\la _1
^{d_1}\cdots \la _n ^{d_n -q},$$ for some $p$-power $q$. This is a
contradiction unless it is cancelled by $\Delta_a(h')$ for some
other monomial $$h' = c'\la _1 ^{d'_1}\cdots \la _n ^{d'_n}.$$ By
maximality assumption on the degrees, we must have $d_i' = d_i$
for all $i\le {n-1}$, and $d'_n = d+n+q'$ for some $p$-power
$q'$. Then $\Delta_a (h')$ contains the term
$$\a _n ^{q'} c'\binom {d_n}{q'}\la _1
^{d_1}\cdots \la _n ^{d_n -q'}.$$ %
Perhaps other terms of this form come from other monomials, but
the upshot is that there is a linear combination of $p$-powers of
$a_n$ that are $0$. But this is true for each $a_n$, and thus
yields a polynomial relation $g(\la _n)$. Applying $\Delta_{\a}$
to $g(\la_n)$ for every $\a \in K$ enables us to reduce the power,
unless $\Delta _\a(g(\la_n)) = 0$ for all $\a$, \ie, $g$ is
quasi-linear. But in this case we can add $g$ to our list of
polynomial relations, and use $g$ to reduce the degree of $f$ in
$\la _n$.

Thus one continues until $\la _n$ does not appear in the highest
monomial of $f$. Applying the same argument whenever a monomial
has at least  two indeterminates in it, we eventually reach the
situation in which each monomial has a single indeterminate, \ie,
$h_i = \sum_i c_{ij} \la _i^{d_{ij}}$. Applying $\Delta$ lowers
the degree unless every $d_{ij}$ is a $p$-power, as desired.

\item Continuing (\ref{Li}), applying $\triangledown$ (as defined in
the proof of \Pref{TR}(\ref{TRii})), lowers the degree unless
every $d_{ij}$ is a $q$-power, as desired.
\end{enumerate}
\end{proof}

The claim of Example~\ref{basexa2}(\ref{BE2ii}) follows as an
immediate consequence, for if the algebra satisfied any extra
identity its corresponding polynomial relations must come from the
fact that $\s$ was Frobenius.

\subsection{Multiplicatively closed Zariski-closed
sets}

Our next theorem is not needed for our exposition, since we never
deal with multiplicatively closed subvarieties of $K^{(n)}$ unless
they are algebras. Nevertheless, the result is interesting in its
own right and complements the other results.
\begin{exmpl} The subvariety  $(K \times \{ 0\}) \cup (\{
0 \} \times K)$ of $K^{(2)}$ is defined by the polynomial
relation~$\la_1\la_2$.
\end{exmpl}

\begin{thm} %
Suppose $A$ is a \Zcd\ (multiplicative) submonoid of $K ^{(n)}$. Then the
polynomial relations of $A$ are generated by polynomial relations
of the form $$\la _{i_1}\cdots \la _{i_t} = 0; \qquad \la
_1^{i_1}\cdots \la _n ^{i_n}=\la _1^{j_1}\cdots \la _n ^{j_n}\quad
\text{for}\quad i_1, \dots, i_n ,j_1, \dots, j_n \in \Z.$$
\end{thm}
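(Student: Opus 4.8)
The plan is to mimic the additive argument of Theorem~\ref{linear}, replacing the operator $\Delta_a$ (translation) by the multiplicative operator $M_a \co f(\lambda)\mapsto f(a_1\lambda_1,\dots,a_n\lambda_n)$ for $a=(a_1,\dots,a_n)\in A$, which again sends polynomial relations of $A$ to polynomial relations of $A$ and, by a suitable combination, lowers the complexity of $f$. First I would reduce to the case where $A$ has an element $a$ all of whose coordinates are nonzero: if not, some coordinate function $\lambda_i$ vanishes identically on a component, and one can use the monomial relation $\lambda_{i_1}\cdots\lambda_{i_t}$ to split the variety (this is exactly the phenomenon of the Example preceding the theorem — $A$ is then covered by coordinate subspaces and one argues on each piece by induction on $n$). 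So assume from now on that $A$ meets the torus $(\mul{K})^n$, and work with a relation $f=\sum_m c_m \lambda^m$ having no such monomial divisor.

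Next I would exploit the torus action. For a monomial $\lambda^m = \lambda_1^{m_1}\cdots\lambda_n^{m_n}$ one has $M_a(\lambda^m) = a^m\lambda^m$, so $M_a$ acts diagonally on the monomial basis with eigenvalue $a^m$. Hence for $t\in\mul{K}$ and a fixed $a$ in the torus part of $A$, the operator $M_{a}$ applied to $f$ and combined as $M_a f - a^{m_0} f$ (where $\lambda^{m_0}$ is, say, the leading monomial in a fixed term order) kills the leading term and leaves a relation with strictly fewer monomials — \emph{unless} $a^m = a^{m_0}$ for every monomial $\lambda^m$ occurring in $f$. Running this over all $a$ in the (Zariski-dense, hence ``generic'') torus part of $A$, the surviving obstruction is precisely that all exponent vectors $m$ appearing in $f$ lie in a single coset of the subgroup $L=\set{u\in\Z^n : a^u = 1 \ \forall a\in A\cap(\mul{K})^n}$ of $\Z^n$. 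In that case, picking one such monomial $\lambda^{m_0}$, every other monomial of $f$ is $\lambda^{m_0}\lambda^{u}$ with $u\in L$, and $f = \lambda^{m_0} g$ where $g=\sum c_m \lambda^{m-m_0}$; since $A$ meets the torus, $\lambda^{m_0}$ is not a relation, so $g$ must itself be a relation of $A$ (as $\lambda^{m_0}$ is invertible on the dense torus part). Now $g$ is a linear combination of the \emph{characters} $\lambda^u$, $u\in L$: these are distinct group homomorphisms $A\cap(\mul{K})^n\to\mul{K}$, and a relation among finitely many distinct characters with coefficients in a field, by Artin/Dedekind independence of characters applied on the Zariski-dense subgroup, forces the relation to be generated by the differences $\lambda^u - \lambda^{u'}$ with $u,u'$ in the \emph{stabilizer} lattice — equivalently by binomials $\lambda^{i_1}_1\cdots\lambda^{i_n}_n = \lambda^{j_1}_1\cdots\lambda^{j_n}_n$ with $(i)-(j)\in L$. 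This produces relations of exactly the stated binomial shape (allowing negative exponents, cleared by multiplying through by a monomial since we are on the torus), together with the monomial relations $\lambda_{i_1}\cdots\lambda_{i_t}=0$ from the reduction step.

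Finally I would assemble these: every relation of $A$ is, after the torus/coordinate-subspace decomposition, a consequence of (i) monomial relations $\lambda_{i_1}\cdots\lambda_{i_t}$ recording which coordinate hyperplanes contain $A$, and (ii) binomial relations $\lambda^{(i)}=\lambda^{(j)}$ coming from the character lattice $L$ of the torus part; Noetherianity of $K[\la_1,\dots,\la_n]$ (Remark~\ref{finmany}) lets us take finitely many. The main obstacle — and the part needing genuine care rather than a routine transcription of Theorem~\ref{linear} — is the bookkeeping when $A$ is \emph{not} contained in the torus: $A$ may be an irreducible variety straddling several coordinate subspaces in a subtle way, and the clean statement ``generated by monomials and binomials'' requires showing that the ideal $\pol(A)$ decomposes compatibly with the partition of $K^{(n)}$ into torus orbits $\prod_{i\in S}\mul{K}\times\prod_{i\notin S}\{0\}$. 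Concretely one shows $\pol(A)=\bigcap_S \big(\pol(A\cap\overline{T_S}) + (\lambda_i : i\notin S)\big)$ and that each intersectand is generated by monomials and torus-binomials by the torus case; primary-decomposition or a direct density argument handles this, but it is where the care lies. I expect everything else — the eigenvalue computation for $M_a$, the descent on the number of monomials, and the character-independence input — to go through essentially as in the additive proof.
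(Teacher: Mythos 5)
Your proposal follows essentially the same route as the paper: substitute $\lambda\mapsto a\lambda$ (your $M_a$, the paper's $f(\alpha^m\lambda)$), observe that monomials are eigenvectors with eigenvalue $a^{\mathbf{i}}$, separate by eigenvalue via a Vandermonde (equivalently, independence-of-characters) argument, and conclude that the surviving obstruction forces binomial relations $\lambda^{\mathbf{i}}=\lambda^{\mathbf{j}}$ plus a vanishing sum of coefficients. The one place you flag as needing ``genuine care''---the torus-orbit bookkeeping and the proposed decomposition $\pol(A)=\bigcap_S\big(\pol(A\cap\overline{T_S})+(\lambda_i:i\notin S)\big)$---is in fact unnecessary and is where the paper's version is cleaner. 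The paper simply allows $\gamma=0$ as one of the eigenvalues $\alpha^{\mathbf{i}}$ in the Vandermonde step; the matrix is still invertible since the $\gamma$'s are distinct, so each eigenspace piece $f_\gamma$ is again a relation. When the reduction terminates (all $\alpha^{\mathbf{i}}$ equal for every $\alpha\in A$), either some $\alpha$ gives a common nonzero value and one concludes $\sum c_{\mathbf{i}}=0$ (yielding a combination of binomials), or the common value is $0$ for every $\alpha$, which says precisely that each $\lambda^{\mathbf{i}}$ is itself a monomial relation. No primary decomposition or orbit-by-orbit gluing is needed; the finitely-many-generators claim comes from Noetherianity exactly as you say.
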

\begin{proof} To simplify notation, we write ${\mathbf i}$ for $i_1,
\dots, i_n$, $\la^{\mathbf i}$ for $\la _1^{i_1}\cdots \la _n
^{i_n}$, and ${\mathbf \a}^{\mathbf i}$ for $\a _1^{i_1}\cdots \a
_n ^{i_n}$. On the other hand, ${\mathbf \a}^m\la^{\mathbf i}$
designates $\a_1^m\la _1^{i_1} \cdots \a _n ^m\la _n^{i_n}$.

Take any polynomial relation $f = \sum c_{\mathbf i} \la
_1^{i_1}\cdots \la _n ^{i_n} = \sum_{\mathbf i} c_{\mathbf i}\la ^
{\mathbf i}$. By definition, $f(\mathbf \a) = 0$ for any $\mathbf
\a \in A$, and thus $f(\mathbf \a^j) = 0$ for each $j$, since $A$
is assumed multiplicative.

Cancelling out any $\la _i$ appearing in a polynomial relation
$\la _i = 0$, we induct on the number of indeterminates in $f$
and then on the number of monomials of $f$. Take any point $(\a_1,
\dots, \a _n)$. For $\gamma \in F$, we write $f_\gamma$ for the
sum of those monomials $c_{\mathbf i}\la^{\mathbf i} $ for which
$\mathbf \a ^ {\mathbf i} = \gamma$. Then by definition, $f = \sum
f_\gamma$. But $$f(\a^m \la) = \sum c_i \a_1^{mi_1}\cdots \a _n
^{mi_n} \la _1^{i_1}\cdots \la _n ^{i_n}= \sum \gamma ^m f_\gamma
(\la),$$ so by a Vandermonde argument, we  see that each
$f_\gamma$ is a polynomial relation of $A$.

Thus, one can separate $f$ into a sum of polynomial relations
involving fewer monomials (and conclude by induction) unless
(comparing monomial by monomial) all the $\a_{\mathbf i}$ are
equal.

 But this means, for each monomial $\la _1^{i_1}\cdots \la _n ^{i_n}$
 and $\la _1^{j_1}\cdots \la _n ^{j_n}$ that $A$ satisfies the
equalities
\begin{equation} \Label{mon1}\a _1^{i_1}\cdots \a _n ^{i_n}=\a _1^{j_1}\cdots
\a _n ^{j_n}
\end{equation}
for all $(\a _1, \dots, \a _n) \in A$, so that
$$\la _1^{i_1}\cdots \la _n ^{i_n}=\la _1^{j_1}\cdots \la _n ^{j_n}$$
is a polynomial relation of $A$. In other words, $\a = \a
_1^{i_1}\cdots \a _n ^{i_n}$ is independent of the choice of the
monomial $\la _1^{i_1}\cdots \la _n ^{i_n}$ of $f$, so $\a^m f$ is
an identity. But now, working backwards,
$$0 = f(\a_1, \dots, \a _n) = \a f(1,\dots, 1),$$
implying $\sum c_i = f(1,\dots, 1) = 0$. This implies that $$f =
\sum _i  c_i ( \la _1^{i_1}\cdots \la _n ^{i_n}- \la_1^{j_1}\cdots
\la_n ^{j_n})$$ is a  consequence of the relations~\eqref{mon1}.
\end{proof}

\subsection{Polynomial relations of commutative algebras}

Now let us utilize the fact that $A$ is an $F$-algebra.

\begin{defn}
A polynomial relation $f$ is of {\bf Frobenius type} if it has one
of the following three forms, where $p = \cha (F)$:

(i) $\la _i = 0$,

(ii) $\la _i- \la _i^s = 0$, where $s$ is a $p$-power, or

(iii) $\la _i - \la _j^s = 0$, $j \ne i$, where $s$ is a
$p$-power.

The polynomial relation $f$ has {\bf $F$-Frobenius type} if, in
(ii) and (iii), $s$ is a $q$-power, $q = \card{F}$ (where, as
usual, we put $q = 1$ if $F$ is infinite).
\end{defn}

\begin{lem}\Label{glue0} {\ }
Suppose $A$ is an additive subgroup of $K^{(n)}$, defined by
polynomial relations of the form $\la _i = 0$ and $\la _i^{q_i} =
\la _j^{q_j}$ for natural numbers $q_i$ and $q_j$. Then any such
relation is equivalent to a polynomial relation of Frobenius type
(ii) or (iii).\end{lem}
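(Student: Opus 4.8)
The statement concerns a polynomial relation of the form $\la_i^{q_i}=\la_j^{q_j}$ on an additive subgroup $A$ of $K^{(n)}$, where $q_i,q_j$ are arbitrary natural numbers (not necessarily $p$-powers), and we want to replace it by a relation of Frobenius type (ii) $\la_\ell-\la_\ell^s=0$ or (iii) $\la_\ell-\la_m^s=0$ with $s$ a $p$-power. First I would use that $A$ is an $F$-algebra, not merely an additive group, so that whenever the $i$-th (respectively $j$-th) coordinate function is not identically zero on $A$ — i.e. the relation $\la_i=0$ (resp. $\la_j=0$) is not among our defining relations — the projection of $A$ onto that coordinate is a nonzero $F$-subalgebra of $K$, hence a field $F_i$ (resp. $F_j$): it is a finite-dimensional commutative domain over $F$, so a finite field extension. (If $F$ is infinite this projection is all of $K$ by Proposition~\ref{finf}, and the argument below degenerates harmlessly.) Over the field $F_i$ the Frobenius map $x\mapsto x^p$ is bijective, so raising to any $p$-power is bijective on $F_i$ and on $F_j$.

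Next, I would write $q_i=p^{a_i}m_i$ and $q_j=p^{a_j}m_j$ with $p\nmid m_i$, $p\nmid m_j$, and observe that on the finite fields $F_i$ and $F_j$ the map $x\mapsto x^{p^a}$ is the identity whenever $p^a\equiv 1$ modulo the exponent of the multiplicative group — more usefully, after multiplying both exponents by a suitable common $p$-power we may assume $a_i=a_j=a$, and then $\la_i^{q_i}=\la_j^{q_j}$ on $A$ is equivalent, applying the bijective inverse of $x\mapsto x^{p^a}$ coordinatewise (legitimate since both projections land in fields where this inverse is a polynomial map, as $x^{p^{-a}}=x^{p^{N-a}}$ for $p^N\equiv 1$), to $\la_i^{m_i}=\la_j^{m_j}$ with $p\nmid m_i m_j$. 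Now I would reduce the prime-to-$p$ exponents: on a finite field $F_i$ of order $p^{d_i}$, raising to the $m_i$-th power is a group automorphism of $\mul{F_i}$, and since $F_i$ is also defined inside $A$ by the Frobenius-type relation $\la_i-\la_i^{p^{d_i}}=0$ — which is already on our list or can be adjoined — the relation $\la_i^{m_i}=\la_j^{m_j}$ combined with $\la_i=\la_i^{p^{d_i}}$, $\la_j=\la_j^{p^{d_j}}$ forces, after composing, a relation of the pure form $\la_i=\la_j^{s}$ (case (iii)) when $i\ne j$, or $\la_i=\la_i^{s}$ (case (ii)) when $i=j$, with $s$ a $p$-power: one solves $m_j s\equiv 1$ or the appropriate congruence in the relevant cyclic group and uses that every residue there is realized by a power of $p$ precisely because $p$ generates... — more safely, one simply notes any bijective monomial map of $\mul{F_j}$ has a monomial inverse, and any monomial $x\mapsto x^s$ on $F_j$ agrees with $x\mapsto x^{p^t}$ for a suitable $t$ once we know $F_j$ is finite, by matching exponents modulo $p^{d_j}-1$ and using that $\{p^t\bmod(p^{d_j}-1)\}$... this last point needs the observation that it suffices to reach \emph{some} relation of Frobenius type equivalent to the original on $A$, and since on the finite field $F_j$ the function $x\mapsto x^{m_j}$ literally equals $x\mapsto x^{p^{d_j}}=x$ composed appropriately, we can always absorb the prime-to-$p$ part using the already-available relation $\la_j=\la_j^{p^{d_j}}$.

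The main obstacle I expect is precisely the bookkeeping in that last reduction: making rigorous that an arbitrary monomial relation $\la_i^{q_i}=\la_j^{q_j}$ on the pair of finite fields $F_i,F_j$ can be rewritten, modulo the cyclic-group relations $\la_i^{p^{d_i}}=\la_i$ and $\la_j^{p^{d_j}}=\la_j$ that $F_i$ and $F_j$ satisfy, as a \emph{single} clean relation $\la_i=\la_j^{p^t}$. The clean way is: reduce $q_i,q_j$ modulo $p^{d_i}-1$ and $p^{d_j}-1$ respectively to get equivalent relations with small exponents, kill the nonzero-constant issue by noting both sides also vanish simultaneously at $0$, then observe that the graph of the relation is a subgroup of $F_i\times F_j$ projecting onto each factor, hence is the graph of a field isomorphism $F_i\to F_j$ (forcing $d_i=d_j$), and every field isomorphism of finite fields is a power of Frobenius, i.e. $\la_i\mapsto\la_i^{p^t}$ — giving case (iii) directly, with case (ii) the diagonal subcase $i=j$. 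I would present this graph-is-an-isomorphism argument as the heart of the proof and relegate the exponent arithmetic to a remark.
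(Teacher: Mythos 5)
Your proposal takes a genuinely different route from the paper's. The paper's proof translates by the element $(1,\dots,1)\in A$, expands $(\la_j+1)^q$ and analyzes the binomial coefficients $\binom{q}{\ell}$ to force $q$ to be a $p$-power (or else deduce finiteness of the $j$-th coordinate), then uses $F$-scaling at the end. You instead propose to project onto the pair of coordinates and recognize the resulting object as the graph of a power of Frobenius — a Galois-theoretic shortcut.

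The shortcut has a concrete gap at its central step. You write: ``observe that the graph of the relation is a subgroup of $F_i\times F_j$ projecting onto each factor, hence is the graph of a field isomorphism $F_i\to F_j$.'' The first clause is false: the zero set of a single monomial relation $\la_i^{m_i}=\la_j^{m_j}$ inside $F_i\times F_j$ is generally not an additive subgroup. For instance, with $F_i=F_j=\F_{p^d}$ and $m_i=m_j=p^d-1$ the zero set is $\{(0,0)\}\cup(\mul{F_i}\times\mul{F_j})$; with $F_i=F_j=\F_3$ and $m_i=m_j=2$ it is $\{(0,0)\}\cup\set{1,2}^2$, which contains $(1,1)$ and $(1,2)$ but not their sum $(2,0)$. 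What actually is a subgroup is the \emph{projection of $A$} onto the pair of coordinates $(\la_i,\la_j)$ (this is where the hypothesis that $A$ is a subgroup is used), and that projection is \emph{contained} in the zero set of the relation — these are not the same thing, and the argument must be phrased in terms of the former.

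Even after repairing that, the inference ``graph of an additive bijection $\Rightarrow$ graph of a field isomorphism $\Rightarrow$ power of Frobenius'' still needs the projection to be closed under multiplication; an additive bijection of $\F_{p^d}$ satisfying $\phi(a)^m=a^m$ can fail to be multiplicative (e.g.\ $\phi=-\operatorname{id}$ when $p$ is odd and $m$ is even). You do open the proof by invoking that $A$ is an $F$-algebra rather than a mere additive group — and the paper's own proof also silently uses this (it translates by $1\in A$ and calls the $j$-th projection a ``finite subfield''), so this is a defensible reading of the hypothesis; but if you rely on it you should say so at the point where it is used, since the lemma as stated only posits an additive subgroup. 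Finally, the mid-proof exponent juggling (``one solves $m_js\equiv 1\dots$'') is, as you yourself concede, not rigorous; the modular-reduction cleanup at the end is the right move but should replace that passage rather than supplement it.
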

\begin{proof}
First we discard all components $i$ for which $\la _i = 0$ holds. Next, assuming
$q_i \le q_j$, one could then factor out $x^{q_j -q_i}$ (since $K$
is a field) to get $\la _i - \la _j^q$ for some $q\ge 1$. We are
done if $q=1$, so assume $q>1$. This relation holds for $\la _i =
\la _j = 1$, so additivity of $A$ gives the relation
$$(\la _i+1)- (\la _j+1)^q = (\la _1-\la _j^q) + \sum _{\ell = 1}^{q-1}\binom q \ell
\la _j^\ell.$$

If some $\binom q \ell \ne 0$, this translates to algebraicity of
the $j$ component of $A$, which must thus be defined in a finite
subfield of $K$, say of dimension $m$ over $\F_p$, and, as is well
known, every polynomial in $x$ is satisfied by the field or is a
multiple of $x^{p^m}-x$, and we have reduced to type (ii).

Thus we may assume that $\binom q \ell = 0$ for all $1 \le \ell
<q$. But this clearly implies $K$ has positive characteristic
$p>0$, and therefore $q$ is a power of $p$ (since otherwise, if
$q'$ is the highest power of $p$ less than $q$, then $\binom q{q'}
\ne 0$ in $K$.)

We may now assume $f = \la_i - \la _j ^s$. Write $s = q^jt$ for
$t$ prime to $q$. Then $f$ reduces to the polynomial relation,
$\la _i - \la _j ^t$. But taking $\a \in F$ with $\a ^t \ne \a$,
we see that $$\a^t (\la _i - \la _j ^t)- (\a\la _i - \a^t\la _j
^t) = (\a^t -\a \la _i),$$ yielding the polynomial relation $\la
_i$ and thus also $\la _j$.
\end{proof}

\begin{thm}\Label{comZar}
Suppose $A$ is a  commutative, semiprime \Zcd\ $F$-subalgebra of a
finite dimensional commutative $K$-algebra $B$. Then $\pol(A)$ is
generated by finitely many polynomial relations of Frobenius type.
\end{thm}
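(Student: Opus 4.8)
The plan is to reduce the statement of Theorem~\ref{comZar} to the structural results already established and then to a careful analysis of the polynomial relations defining $A$ inside $B$. First I would exploit the hypothesis that $A$ is semiprime and \Zcd: by \Tref{semis}, $A$ is semisimple, hence (being commutative) a finite direct product $A = A_1 \times \cdots \times A_m$ of commutative matrix algebras over fields, i.e.\ each $A_i$ is a field --- either finite or equal to $K$. Using \Rref{interpol}(\ref{interpoliii}) we may assume $B = KA$, and since $B$ is commutative and finite dimensional over $K$, we can choose the base $b_1,\dots,b_n$ of $B$ adapted to a Wedderburn (Peirce) decomposition of $B$; by \Rref{interpol}(3) the polynomial relations of $A$ split as a ``sum'' of the polynomial relations of the projections of $A$, so it suffices to treat each indecomposable block. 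Thus I reduce to the case where $B$ is a local commutative $K$-algebra, $B = K \oplus \Rad(B)$, and $A$ projects onto a single field factor.

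Next I would invoke \Pref{TR}(\ref{TRii}): $\pol(A)$ is generated by quasi-linear, $F$-homogeneous polynomial relations, and then \Tref{linear}(\ref{Li}): each such relation is a consequence of relations of weak Frobenius type, i.e.\ of the form $\sum_{i,j} c_{ij}\lam_i^{q_{ij}} = 0$ with $q_{ij}$ being $p$-powers. So the generators of $\pol(A)$ may be taken to be $q$-polynomials in the ambient coordinates. The point is now to put these into the much more restrictive Frobenius-type form (i)--(iii), which have only one or two monomials. Here is where commutativity of $A$ and semiprimeness must be used essentially: since each projection $A_i$ is a field, the coordinate $\lam_\ell$ corresponding to the identity element of the block is algebraic over $\F_p$ when $A_i$ is finite (so $\lam_\ell - \lam_\ell^s = 0$ for a suitable $p$-power $s$, giving type (ii)) or generic when $A_i = K$ (no relation on that coordinate), and every other coordinate $\lam_{\ell'}$ appearing in $A$ must, because $A$ is a product of \emph{fields} with no nilpotents, either vanish identically on $A$ (type (i)) or be a fixed power of a ``primitive'' generating coordinate of its block, giving a relation $\lam_{\ell'}^{q_{\ell'}} = \lam_\ell^{q_\ell}$ between coordinates within one block. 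I would then apply \Lref{glue0} to convert each such relation $\lam_i^{q_i} = \lam_j^{q_j}$ (together with the $\lam_i = 0$ relations) into a genuine Frobenius-type relation of the form (ii) or (iii). Finiteness of the generating set is automatic from \Rref{finmany}.

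The main obstacle I anticipate is the step of showing that, after passing to an indecomposable block and to weak-Frobenius-type generators, each coordinate is \emph{exactly} a power of a single generating coordinate --- i.e.\ that no genuinely ``$q$-polynomial with several monomials'' relation survives. A priori a quasi-linear relation in several coordinates of the block need not collapse to a two-term relation just from $A$ being an $F$-space; the collapse must be forced by the multiplicative structure of $A$ (that $A$ is closed under products and each block is a field, so the coordinates of an element of $A_i$ are polynomial functions of one another with no additive cancellation possible between distinct monomials when we restrict to the diagonal $\a \mapsto (\a, \a^{q_1}, \dots)$). I would make this precise by choosing the base so that one distinguished coordinate $\lam_{j}$ restricts to a generator of the field $A_i$ (or, when $A_i = K$, to a transcendental parameter), writing every element of $A_i$ as $(\phi_1(\a), \dots, \phi_r(\a))$ for polynomial maps $\phi_k$, and observing that any relation $f(\lam) = 0$ on $A$ becomes, under $\lam_k \mapsto \phi_k(\lam_j)$, the zero polynomial (or a multiple of the minimal polynomial of $\a$ in the finite case); combined with the weak-Frobenius form this pins down each $\phi_k$ as a single $p$-power monomial $\lam_j^{s_k}$, which is exactly what is needed. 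A secondary technical point is bookkeeping across blocks when the same ambient coordinate could interact with two Wedderburn components of $B$ --- this is handled cleanly by first choosing the base to be compatible with the block decomposition of $B$, so that no coordinate is shared, and then using \Rref{interpol}(3).
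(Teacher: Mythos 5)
The reduction in your second paragraph is where the argument breaks. After replacing $B$ by $KA$ (which is now a commutative semisimple $K$-algebra, hence $KA \cong K^t$), you invoke Remark~\ref{interpol}(3) to claim that the polynomial relations of $A$ ``split as a sum'' over the indecomposable blocks of $B$, and on that basis you reduce to the case where $B$ is local and $A$ lands in a single field factor. But Remark~\ref{interpol}(3) applies only when $A$ itself is a direct sum $A_1 \oplus A_2$ of its two block projections, and this is exactly what \emph{fails} when $A$ is glued: if $A = \set{(\alpha, \alpha^q) : \alpha \in F_1} \subset K \times K$, then each projection is all of $F_1$, yet $A$ is a proper subspace of $F_1 \times F_1$, and the crucial relation $\lam_2 = \lam_1^q$ is invisible block by block. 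The Frobenius relations of type (iii), $\lam_i - \lam_j^s = 0$ with $i,j$ in distinct blocks, are precisely the cross-block phenomenon the theorem is about, and your reduction discards them before the argument starts. The ``bookkeeping across blocks'' remark at the end, which again appeals to Remark~\ref{interpol}(3), repeats the same mistake rather than repairing it.

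What you identify as the ``main obstacle'' --- forcing a multi-term weak-Frobenius relation to collapse to a one- or two-term Frobenius relation by exploiting that $A$ is a ring, not merely an $F$-space --- is the right idea and is close in spirit to the paper's argument. The paper keeps $A$ inside the full $K^t$ and works with all $t$ coordinates at once. After reducing to weak $F$-Frobenius type via Theorem~\ref{linear}, it observes that the relations involving only a single $\lam_i$ are generated by the minimal $g_i = \lam_i^{q^{d_i}} - \lam_i$ for the finite field $F_i$; then, working modulo the $g_i$, it reduces a surviving multi-monomial relation to the form $\lam_1 + \sum_j c_j \lam_j^{q_j}$ with $\lam_1$ appearing linearly, and uses closure of $A$ under multiplication by its own elements (the scaling $\lam \mapsto a\cdot\lam$ for $a \in A$) to force $\alpha_1 = \alpha_j^{q_j}$ coordinatewise, hence the type-(iii) relation $\lam_1 - \lam_j^{q_j}$. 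Your ``parametrize the block by one generator $\alpha$ and write each coordinate as $\phi_k(\alpha)$'' would land at the same place \emph{if} you kept the cross-block coordinates in play; so the fix is to drop the reduction to a local $B$ and run the collapse argument on $A \subseteq K^t$ directly.
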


\begin{proof}
We can write $A \subseteq K_1 \times \dots \times K_t$, where each
$K_i \approx K$. By \Tref{linear}, it is enough to consider
polynomial relations of weak $F$-Frobenius type, i.e. of the form
$f = \sum _{i=1}^n \sum _{j\ge 1} c_{ij} \la_i^{q_{ij}}$. If we
have any relations of type (i), we can simply remove all terms
with $\la _i$, and ignore~$\la _i$.

The constant term of $f$ is $0$, where the $q_{ij}$ are powers of
$p$. If all the $q_{ij}$ are divisible by~$p$, then we may take
the $p^{\mbox{th}}$ root and still have a polynomial relation, so we may
assume that some monomial of $f$ is linear. For convenience we
assume $f$ has a monomial linear in $\la _1$.

If $A$ satisfies a polynomial relation only involving $\la _i$,
this means that the projection of $A$ onto $K_i$ is a finite field
$F_i$, which, if nontrivial, satisfies some identity $g_i = \la
_i^q - \la _i$, where we take $q$ minimal possible. But it is
well known that every polynomial satisfied by all elements of
$F_i$ is a consequence of $g_i$, so we may assume that all
polynomial relations involving only $\la _i$ are a consequence of
$g_i$.

We claim that modulo the $g_i$, either $f$ becomes $0$ or $f$
yields some polynomial relation of Frobenius type (iii). Suppose
$\la _1^i$ appears with two differing degrees. Write $f = \sum f_i
\la^i$. If each $f_i$ is a polynomial relation, then we continue
inductively. Otherwise take some nonzero value and conclude that
$F_1$ is algebraic of bounded degree over $F$, and thus is finite,
yielding a polynomial relation of type (ii), which thus is a
consequence of $g_i$.

Thus we may assume $\la_1$ appears in a single monomial. But this
means $f$ has the form $\la_1 + \sum c_j \la _j^{q_j}$, so
$$\a_1\la_1 + \a_j^{q_j}\sum c_j
\la _j^{q_j} = \a_1(\la_1 + \sum c_j \la _j^{q_j})$$ formally, or
in other words $\a _1 = \a _j^{q_j}$ for each $j$ appearing in
$f$, as desired.
 \end{proof}


\begin{rem}\Label{gluequiv}
We can combine Theorems \ref{linear} and \ref{comZar} for any
subalgebra $A$ of $B= \M[n](K)$, as follows: Let $A^1 = A \cup
\sum Fe_{ii} \subseteq B$. We define a relation on $\{1, \dots,
n\}$ by saying $i \equiv j$ if there is a nontrivial Frobenius
polynomial relation involving both $i$ and $j$, and we extend it by
transitivity to an  equivalence relation on $I = \{1, \dots, n\}$.
If $I_u$ is some equivalence class, then $A^1$ contains some
element
$$e_u= \sum _{i \in I_u} \a_i e_{ii}.$$ But then, for any $a\in
A$ and any $u,v \in I$, clearly $e_u a e_v \in A^1$, and the only
indices appearing nontrivially are from $I_u \times I_v$. We call
a quasi-Frobenius polynomial relation {\bf basic} if it only
involves coefficients from $I_u \times I_v$ for suitable
equivalence classes of $I$. In this way, we see that any
quasi-Frobenius polynomial relation reduces to the sum of basic
quasi-Frobenius polynomial relations.
\end{rem}

\section{Explicit representations of \Zcd\  algebras}\Label{sec:4}

We have seen in Theorem \ref{semis} that any semiprime \Zcd\
algebra is a direct sum of matrix components and thus has a very
easy representation inside $\M[n](K)$ along diagonal matrix
blocks. In order to describe the structure of a \Zcd\ algebra $A$
with nonzero nilradical $J$, we consider a faithful representation
of $A$ in a matrix algebra $\M[n](K)$. Throughout, we use this
particular representation to view $A \subseteq \M[n](K)$. Our
object is to find a ``canonical'' set of matrix units of
$\M[n](K)$ with which to view a \Zcd\ algebra $A$. The underlying
idea, introduced by Lewin \cite{Lew74} for PIs and utilized to
great effect in characteristic 0 by Giambruno and Zaicev
\cite{GZ}, is to write the algebra in something like upper
triangular form, in order to understand the placement of radical
substitutions in polynomial identities.

\begin{exmpl}
$A = \left(\begin{matrix} K & K & 0 \\ 0 & K & 0 \\ 0 & K & K
\end{matrix}\right)$.
Here $A/J$ can be identified with $\left(\begin{matrix} K & 0& 0
\\ 0 & K & 0 \\ 0 & 0& K \end{matrix}\right)$, and $J$ with
$\left(\begin{matrix} 0& K & 0 \\ 0 & 0 & 0 \\ 0 & K &
0 \end{matrix}\right)$. We can put $A$ into upper triangular form
by switching the second and third rows and columns to get
$\left(\begin{matrix} K & 0& K
\\ 0 & K & K \\ 0 & 0& K \end{matrix}\right)$.
\end{exmpl}

Unfortunately, we may not be able to straighten out $A$ so easily,
even in characteristic~$0$.

\begin{exmpl}\Label{glue00}
Let $A = \left\{\left(\begin{matrix}  a & b \\ 0 & a
\end{matrix}\right): \ a, b \in K\right\}$. This can also be viewed as the
$2\times 2$ matrix representation of the commutative algebra of
dual numbers of $K$, i.e., $\left(\begin{matrix} a & b \\ 0 & a
\end{matrix}\right)$ is identified with $a+ b \delta$, where
$\delta ^2 = 0$.
\end{exmpl}

In order to represent this as a triangular matrix ring, we must
identify certain components. Our objective in this section is to
describe how the identifications work for a particular
representation of a \Zcd\ algebra. Let us start with an easy
example that may lower our expectations.

\begin{exmpl}\Label{unglued} $F = \F_2(\mu)$, where $\mu$ is an
indeterminate over the field $\F_2$ of two elements
and $K$ is its algebraic closure. Then $F$ can be represented in
$K\times K$ by $a \mapsto (a^2,a^4)$, so the identification among
components is the relation $\la _2 = \la _1^2$.
\end{exmpl}

A direct identification between two components, via a polynomial
relation, is called {\bf gluing}. When components are not glued,
we say they are {\bf separated}.  In this paper, gluing is
considered mostly along the diagonal blocks, since off-diagonal
gluing turns out to be more complicated. As the above example
shows, gluing need not be ``onto'', when taken over infinite
fields.

\begin{defn}\Label{wbf}
Suppose $A$ is a \Zcd\ subalgebra of $\M[n](K)$ with radical $J$,
such that $A/J = A_1 \times \dots \times A_k$ with $A_u \cong
\M[n_u](F_u)$ for subfields $F_u \subseteq K$ ($u = 1,\dots,k$).

We say $A$ is in {\bf Wedderburn block form} if $n = \sum t(u)
n_u$, and for each $u$ there are $t(u)$ distinct matrix blocks
$A_u^{(1)}, \dots, A_u^{(t(u))}$ of size $n_u \times n_u$ along
the diagonal, each isomorphic to $A_u$, such that the given
representation $\varphi \co A \to \M[n](K)$ restricts to an
embedding $\varphi_u \co A_u \to A_u^{(1)}\times \dots \times
A_u^{(t(u))}$, where the projection $\varphi_u^{(\ell)} \co A_u
\to A_u^{(\ell)}$ is an isomorphism for each $1\leq \ell\leq
t(u)$. Furthermore, $J$ is embedded into strictly upper
triangular blocks (above the diagonal blocks). For each $u$, the
blocks $A_u^{(1)},\dots,A_u^{(t(u))}$ are {\bf glued} and belong
to the same {\bf gluing component}. For further reference, we
define $m = \sum t(u)$, the total number of diagonal blocks
(before gluing) in the representation of $A$.
\end{defn}

For algebras with $1$,  where $1$ is represented as the identity
matrix, obviously each diagonal Wedderburn block is nonempty.
However, for algebras without $1$, one could have some $B_u$
consisting only of $0$ matrices. In this case we say the block
$B_u$ is {\bf empty}.

Note that the glued blocks do not have to occur consecutively; for
example the semisimple part could be embedded in
$$\left(\begin{matrix}
A_1^{(1)} & 0& 0 & 0 & 0
\\ 0 & A_2^{(1)} & 0 & 0 & 0 \\ 0 & 0 & A_1^{(2)} & 0 & 0
\\0 & 0& 0  & A_1^{(3)}& 0\\0 & 0& 0   & 0 &A_2^{(2)}\end{matrix}
\right).$$

The radical belongs to blocks above the diagonal; for example,
$$\left(\begin{matrix} A_1^{(1)} & 0& 0 & J & J
\\ 0 & A_2^{(1)} & J & 0 & J \\ 0 & 0 & A_1^{(2)} & 0 & 0
\\0 & 0& 0  & A_1^{(3)}& 0\\0 & 0& 0   & 0 &A_2^{(2)}\end{matrix}
\right).$$

\begin{exmpl}\Label{GZB}
The following  basic illustration of Wedderburn decomposition,
without gluing, appears as the ``minimal algebra'' in Giambruno
and Zaicev \cite[Chapter 8]{GZ}, which they realize as an upper
block-triangular algebra
$$\(\begin{array}{cccc}
M_{n_1}(F) &  * & * & * \\
0 & M_{n_2}(F) & * & * \\
0 & 0 & \ddots & *  \\
0 & 0 & 0 & M_{n_t}(F)
\end{array}\).$$

The Giambruno-Zaicev algebra could be thought of as the
algebra-theoretic analog of the Borel subgroups of
$\operatorname{GL}(n,F)$. The semisimple part $S$ is the direct
sum $\bigoplus M_{n_i}(F)$ of the diagonal blocks, and the radical is
the part above these blocks, designated above as $(*)$.

 This kind of algebra first
came up in a theorem of Lewin \cite{Lew74}, who showed that any
PI-algebra $A$ with ideals $I_1,I_2$ satisfying $I_1I_2 = 0$ can
be embedded into an algebra of the form $\(\begin{array}{cccc}
A/I_1 &  * \\
0 & A/I_2
\end{array}\)$.

Giambruno and Zaicev proved, in characteristic $0$,  that for any
variety $\mathcal V$ of PI-algebras, its exponent $d$ (a concept
defined in terms of the asymptotics of the codimensions of
$\mathcal V$) is an integer and can be realized in terms of one of
these Giambruno-Zaicev algebras, as $d = \sum _{u=1}^k n_u^2$.
\end{exmpl}

\begin{rem}\Label{GZB1}
Belov \cite{Bel2} proved a parallel result to Giambruno-Zaicev's
theorem, for Gel'fand-Kirillov dimension in any characteristic.
Namely the GK-dimension of the ``generic'' upper
block-triangular algebra generated by $m$ generic elements is $$k
+ (m-1)\sum _{u=1}^k n_u^2.$$

In the same paper, Belov proved the following result: Suppose $A =
S\oplus J$ is the Wedderburn decomposition of a f.d.~algebra $A$,
with $S = A_1 \oplus \dots \oplus A_k$ (where $A_u$ are the simple
components). If there exist $x_u \in J$ such that $A_1 x_1A_2 x_2
\cdots x_m A_m \ne 0$ for some $m \le k$, then $A$ contains a
subalgebra isomorphic to the Giambruno-Zaicev algebra built up
from from $A_1, \dots, A_k$.
\end{rem}

Wedderburn block form refines Wedderburn's principal theorem.
Indeed, it is apparent by inspection that the part of $A$ along
the diagonal blocks is the semisimple part of $A$, and the part on
the blocks above the diagonal is the radical part. Note that this
example has not described the identifications among the
$A_u^{(\ell)}$. Clearly there must be gluing whenever some
$t(u)>1$, since $\dim (A_u^{(1)}\times \dots \times A_u^{(t(u))})
= t(u) \dim A_u$.

We can tighten these observations with some care. We start by
assuming that $A$ is a $K$-algebra ($K$ is algebraically closed,
as always). Then each $F_u = K$. An easy application of an
argument of Jacobson (spelled out in \cite[Theorem
 25C.18]{Row3}) yields:

\begin{thm}\Label{block1}
For $K$ algebraically closed, any finite dimensional $K$-algebra
$A$ can be put into Wedderburn block form.
\end{thm}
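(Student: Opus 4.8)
The plan is to realise $A$ through a suitable left regular representation, one in which the underlying vector space is filtered by powers of $\Rad(A)$ and the basis is adapted to the semisimple part. First I would apply Wedderburn's Principal Theorem (\Tref{Wed2}, applicable since $K$ is perfect) to write $A=S\oplus J$, where $J=\Rad(A)$ is nilpotent with $J^m=0$ and $S\cong A/J$ is a semisimple subalgebra; since $K$ is algebraically closed, $S=A_1\times\cdots\times A_k$ with $A_u\cong\M[n_u](K)$, and I write $S_u$ for the (unique) simple left $A_u$-module, of dimension $n_u$. We may assume $A$ is unital, adjoining a unit otherwise at the possible cost of a few empty diagonal blocks, which \Dref{wbf} permits. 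The essential point is then to set up the vector-space structure correctly.

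Each $J^\ell$ is a two-sided ideal of $A$, hence in particular a left $S$-module, and $J^{\ell+1}\subseteq J^\ell$ is an inclusion of left $S$-modules. As $S$ is semisimple I can split it, obtaining a left $S$-submodule $L_\ell\subseteq J^\ell$ with $J^\ell=L_\ell\oplus J^{\ell+1}$ and $L_\ell\cong J^\ell/J^{\ell+1}$; iterating over $\ell$ gives $A=L_0\oplus L_1\oplus\cdots\oplus L_{m-1}$ as left $S$-modules, with $J^\ell=L_\ell\oplus\cdots\oplus L_{m-1}$ for every $\ell$. I then refine each $L_\ell$ into a direct sum of simple left $S$-modules, choose a basis of $A$ running through $L_{m-1},L_{m-2},\dots,L_0$ in that order and respecting the refinement, and let $\varphi\co A\to\End[K](A)$ be the left regular representation on $A$ itself, so $\End[K](A)\cong\M[n](K)$ with $n=\dim_K A$; it is faithful since $1\in A$.

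Everything else follows by inspection of this basis. For $s\in S$ the matrix $\varphi(s)$ is block diagonal, because $S$ preserves each simple summand; and for $j\in J$ and a simple summand $L'\subseteq L_\ell$ one has $jL'\subseteq J^{\ell+1}=L_{\ell+1}\oplus\cdots\oplus L_{m-1}$, whose basis blocks all precede the block of $L'$, so $\varphi(J)$ is strictly upper triangular and lies above the diagonal blocks. On the diagonal block carried by a simple summand $L'\cong S_u$ the radical acts as $0$ while $S$ acts through the projection $S\twoheadrightarrow A_u$ followed by the canonical isomorphism $A_u\isom\End[K](S_u)=\M[n_u](K)$; thus that block is one of the $A_u^{(\ell)}$ of \Dref{wbf}, the restriction $\varphi_u^{(\ell)}\co A_u\to A_u^{(\ell)}$ of $\varphi$ is an isomorphism onto it, and the $t(u)$ blocks of type $u$ are glued through the diagonal embedding $A_u\hookrightarrow\prod A_u^{(\ell)}$. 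Since $S_u$ already occurs in $L_0\cong A/J$ we get $t(u)\ge n_u\ge 1$, so no diagonal block is empty, and comparing dimensions gives $n=\sum_u t(u)n_u$ with $m=\sum_u t(u)$ diagonal blocks — precisely Wedderburn block form. This carries out the argument of Jacobson recorded in \cite[Theorem~25C.18]{Row3}.

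The one step requiring care is building the lifts $L_\ell$ as \emph{$S$-submodules} rather than as arbitrary vector-space complements of $J^{\ell+1}$ in $J^\ell$: it is precisely the semisimplicity of $S$ (as a module over itself) that forces $\varphi(S)$ to be block diagonal, and for arbitrary complements the conclusion genuinely fails. Everything remaining is routine bookkeeping with block-triangular matrices.
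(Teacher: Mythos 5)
Your proof is correct and is precisely the argument the paper invokes: it cites ``an argument of Jacobson (spelled out in [Theorem~25C.18]{Row3})'' without reproducing it, and your left-regular representation in a basis adapted to an $S$-module splitting of the radical filtration $A\supset J\supset J^2\supset\cdots$ is exactly that argument, fleshed out. The only cosmetic issue is a clash of notation (you use $m$ both for the nilpotence index and, following Definition~\ref{wbf}, for $\sum t(u)$), which should be disentangled.
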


\begin{cor}\Label{block2}
Any \Zcd\  $F$-subalgebra $A\sub \M[n](K)$ can be put into
Wedderburn block form.
\end{cor}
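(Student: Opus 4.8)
\textbf{Proof plan for Corollary \ref{block2}.}
The plan is to deduce the statement from \Tref{block1} by passing to the \fcr\ $KA$ and showing that the change of matrix units that puts $KA$ into Wedderburn block form behaves compatibly with the \Zcd\ subalgebra $A$. First I would apply \Pref{choice} to assume $A$ is maximally \Zcd\ in $B = KA$; this is the finite dimensional $K$-algebra to which \Tref{block1} applies. By that theorem there is an invertible $g\in \GL[n](K)$ (equivalently, a new choice of base for $K^{(n)}$) such that $g\inv (KA) g$ is in Wedderburn block form, with semisimple part $S$ along a set of diagonal blocks $A_u^{(\ell)}$ (each $A_u^{(\ell)}\cong \M[n_u](K)$ since $K$ is algebraically closed) and radical $J = \Rad(KA)$ embedded in strictly upper triangular blocks.

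The next step is to check that conjugating by $g$ does not disturb the property of being \Zcd: this is exactly \Rref{indep}, since a linear change of base induces an automorphism of $K[\lam_1,\dots,\lam_n]$ and hence preserves the Zariski topology. So after replacing $A$ by $g\inv A g$ we may assume $KA$ itself is in Wedderburn block form while $A$ is still \Zcd. Now I would verify the requirements of \Dref{wbf} for $A$ one by one. By \Pref{radgood}, $A/\Rad(A)$ is \Zcd\ in $(KA)/\Rad(KA) = S$, and by \Pref{radcl} we have $\Rad(A) = A\cap \Rad(KA)$, so the semisimple part of $A$ sits inside the diagonal blocks and $\Rad(A)$ sits inside the strictly upper triangular blocks, as required. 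The decomposition $A/\Rad(A) = A_1\times\cdots\times A_k$ into the prime (hence, by \Pref{mat}, matrix) components $A_u\cong \M[n_u](F_u)$ comes from \Tref{semis}; and for each simple component of $S$, namely $A_u^{(\ell)}$, the projection $\varphi_u^{(\ell)}\co A_u\to A_u^{(\ell)}$ is obtained by restricting the corresponding coordinate projection of $S$ to the \Zcd\ image of $A_u$, which is an isomorphism onto its image because $A_u$ is simple (a nonzero algebra map out of a simple algebra is injective, and dimension/closedness considerations via \Cref{mapp} make it onto $A_u^{(\ell)}$ after we group the blocks on which $A_u$ acts faithfully).

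The main obstacle I anticipate is precisely the bookkeeping of the gluing structure: one must argue that the blocks into which a given simple component $A_u$ of $A/\Rad(A)$ embeds are exactly the blocks of $KA$ on which the corresponding simple component of $S$ acts, so that setting $t(u)$ equal to the number of such blocks and declaring them glued is consistent with \Dref{wbf}. This requires matching the Wedderburn block data of $A$ against that of $KA$ under the inclusion $A/\Rad(A)\hookrightarrow S$, using that this inclusion is a \Zcd\ embedding (\Pref{radgood}) and that each $A_u$ is simple so its image is forced to lie diagonally across a definite set of $A_u^{(\ell)}$'s. Once this correspondence is established, the remaining clauses of \Dref{wbf} (the dimension count $n=\sum t(u)n_u$, nonemptiness of the diagonal blocks for algebras with $1$, the strict upper triangularity of $J$) follow immediately from the corresponding facts for $KA$ given by \Tref{block1}.
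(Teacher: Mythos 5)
Your proposal is correct and follows essentially the same route as the paper: put $KA$ into Wedderburn block form via \Tref{block1}, then ``intersect down'' to $A$ using \Pref{radgood} (so that $A/J$ sits, \Zcd, inside the diagonal semisimple part $S$ of $KA$) and \Pref{radcl} (so that $J = A\cap\Rad(KA)$ lies strictly above the diagonal). The paper's own proof is exactly this, compressed to two sentences; you have filled in more detail, and the ``main obstacle'' you identify --- matching each simple component $A_u$ of $A/J$ to the set of $K$-blocks on which it acts faithfully, using simplicity to force a diagonal embedding --- is precisely the point the paper leaves implicit. Two minor remarks: the appeal to \Pref{choice} at the outset is harmless but not actually used anywhere in the argument (you never need maximal closedness, only \Pref{radgood} and \Pref{radcl}, which hold in general); and in \Dref{wbf} the symbol $A_u^{(\ell)}$ denotes the image of $A_u$, an $n_u\times n_u$ block isomorphic to $\M[n_u](F_u)$, whereas you also use it for the full $K$-block $\M[n_u](K)$ of $KA$ --- the intent is clear, but the two should be distinguished, since for $F_u$ finite the image is a proper subalgebra of the $K$-block.
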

\begin{proof} We put   $KA$ into Wedderburn block form
and then intersect down to~$A$. Explicitly, $A/J$ is \Zcd\ in the
semisimple part $S$ of $KA$, and $J$ is the intersection of $A$
with the part of $AK$ above the diagonal components.
\end{proof}

\subsection{Gluing Wedderburn blocks}

Let us investigate gluing in the Wedderburn block form. We start
with the semisimple part $\bigoplus A_u^{(\ell)}$ (of the blocks
along the diagonal). By definition, the only gluing occurs among
the $A_u^{(\ell)}$ for the same $u$.

\begin{rem} Suppose $\varphi_u^{\ell} \co A_u \to A_u^{(\ell)}$ is
the representation as above. Then for any $\ell, \ell'$ in $\{1,
\dots, t(u)\}$ we have the isomorphism $$\varphi_u^{\ell, \ell'} =
(\varphi_u^{\ell})^{-1}\varphi_u^{\ell'} \co A_u^{(\ell)} \to
A_u^{(\ell')}.$$\end{rem}

\begin{rem}\Label{glue1}
Let $1_u^{(\ell)}$ denote the unit element of $A_u^{(\ell)}$.
($1_u^{(\ell)}$ is then an idempotent of $\M[n](K)$, but we want
to emphasize its role in $A_u^{(\ell)}$.)   We want to understand
the isomorphisms $\varphi_u^{\ell, \ell'}$ in terms of their
action on the center of the block. First of all, by \Lref{centcl},
since $A$ is \Zcd, so is $\Cent{A}$, which contains $\sum _{u}
F_u \sum _{\ell} 1_u^{(\ell)}$. It follows (by Theorem
\ref{comZar}) that all identifications in the center come from
polynomial relations of Frobenius type, between pairs $e_\ell$ and
$e_\ell'$ (as $\ell, \ell'$ run between $1$ and $t(u))$; i.e., of
the form
$$ {\la_u^{(\ell)}}_{ii} -   ({\la_{u}^{({\ell'})}}_{ii})^q,$$
where $s$ is a power of $\card{F}$ (same $q$ for each $i =
1,\dots, n_u$). Here, and henceforth, ${\la_{u}^{(\ell)}}_{ij}$ is
the variable corresponding to the $(i,j)^{\mbox{th}}$ entry in the block
matrix $A_{u}^{(\ell)}$. This clearly is an instance of gluing.
\end{rem}

Since taking the power $q$ is not necessarily onto for $K$
infinite, Remark~\ref{glue1} is not symmetric, in the sense that
reversing direction from $\ell'$ to $\ell$ involves taking $q$
roots, which is possible in the variety but not over all of $K$.

In the relation above, if $\ell'=\ell$, i.e.,
${\la_u^{(\ell)}}_{ii} = ({\la_u^{(\ell)}}_{ii})^q$ holds, then we
can view $A_u^{(\ell)}$ as an algebra over a base field $F_u$ of
$q$ elements.

We continue from the center of diagonal blocks to the blocks
themselves.

\begin{defn}\Label{Fgng}
Suppose $\card{F} = q$. Two diagonal blocks $A_u^{(\ell)}$ and
$A_u^{(\ell')}$ of $A$ in $\M[n](K)$ have {\bf Frobenius gluing}
of {\bf exponent} $d$ if there are $n_u\times n_u$ matrix units of
$\M[n](K)$ such that, for large enough $\kappa$, writing
$e_{ij}^{(\ell)}$ for the corresponding $n_u \times n_u $ matrix
units in $A_u^{(\ell)}$, the isomorphism $\phi_u^{\ell,\ell'}$
identifies $\sum \a _{ij}^{q^{d+\kappa}} e_{ij}^{(\ell)}$ (in
$A_u^{(\ell)})$ with $\sum \a _{ij}^{q^{\kappa}} e_{ij}^{(\ell')}$
(in $A_u^{(\ell')})$.

This definition can be extended to gluing an arbitrary number $t$
of blocks.
\end{defn}

In this definition, since we may take $q$-roots, $\kappa$ can be
chosen to be $\max\set{0,-d}$. We could have $d = 0$, in which
case we call this {\bf identical gluing}. The same considerations
hold for an arbitrary number $t$ of glued blocks; the smallest
exponent may always be assumed to be $0$.
Soon we shall see that the only possible gluing on diagonal blocks
is Frobenius.

Let us consider the general situation. Since \Zcd\ algebras are
defined in terms of polynomial relations on the algebraically
closed field $K$ and all gluing is via a homomorphism from  $K$
to itself, the gluing must come from a homomorphism defined by a
polynomial.

\begin{rem}\Label{glu00}
Gluing is possible only between matrix blocks of the same size,
whose centers have the same cardinality.

When   $\cha (F)=0 $, the only polynomial homomorphism is the
identity, so every gluing is identical.
\end{rem}

\begin{prop}\Label{pass}
If $F$ is an infinite field, any variety of $F$-algebras contains
a \Zcd\ algebra whose gluing is identical.%
\end{prop}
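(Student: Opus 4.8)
The plan is to realise the variety $\mathcal V$ by a \Zcd\ algebra and then to use that, over an infinite base field, such an algebra is automatically a $K$-algebra, so that every gluing isomorphism between diagonal Wedderburn blocks is $K$-linear and hence carries no Frobenius twist.

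\emph{Step 1: choose a \Zcd\ representative.} Since $F$ is infinite, $\mathcal V$ is PI-equivalent to a finite dimensional $K$-algebra: a representable algebra generating $\mathcal V$ exists (in the affine case this is Kemer's representability theorem over an infinite field, applied to a relatively free algebra of $\mathcal V$), and by \Pref{finf} its \Zcr\ coincides with its \fcr\ $C\sub\M[m](K)$, a $K$-subalgebra. By \Rref{Kcl}, $C$ is \Zcd, and it generates $\mathcal V$; so it suffices to exhibit a Wedderburn block form of $C$ with identical gluing.

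\emph{Step 2: trivialise the gluing.} Put $C$ into Wedderburn block form, $C=S\oplus J$ with $J=\Rad(C)$ and $S$ the semisimple part (\Cref{block2}); since $C$ is a $K$-subalgebra of $\M[m](K)$, the conjugation bringing it into block form lies in $\GL[m](K)$, so the block form is again a $K$-algebra. Now $C/J$ is a semisimple quotient of a $K$-algebra and $K$ is algebraically closed, so $C/J\isom\prod_u\M[n_u](K)$; hence each $F_u=K$, every diagonal block $A_u^{(\ell)}$ is a full matrix algebra $\M[n_u](K)$, the Wedderburn splitting $S\hra C$ may be taken $K$-linear, and therefore each projection $\varphi_u^{(\ell)}\co A_u\ra A_u^{(\ell)}$ is a $K$-algebra isomorphism. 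Consequently every gluing map $\varphi_u^{\ell,\ell'}=(\varphi_u^{(\ell)})\inv\varphi_u^{(\ell')}$ is a $K$-algebra automorphism of $\M[n_u](K)$; by the Skolem--Noether theorem it is inner, say conjugation by an invertible matrix $P_{u,\ell'}$ with $P_{u,1}=I$, and --- this is the crux --- being $K$-linear it involves no power map. For each $u$ and each $\ell'\ne1$ we replace the chosen $n_u\times n_u$ matrix units $e_{ij}^{(\ell')}$ of $A_u^{(\ell')}$ by their conjugates $P_{u,\ell'}\,e_{ij}^{(\ell')}\,P_{u,\ell'}\inv$; after this change of matrix units every $\varphi_u^{\ell,\ell'}$ becomes the identity map, that is, the gluing has exponent $d=0$ in the sense of \Dref{Fgng}. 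The new matrix units span the same diagonal blocks, so the block-triangular shape --- in particular the placement of $J$ strictly above the diagonal blocks --- is unchanged, and $C$ is now in Wedderburn block form with identical gluing.

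\emph{The main obstacle.} The only genuinely non-formal ingredient is Step~1, namely that $\mathcal V$ is represented by \emph{some} \Zcd\ algebra, which rests on the representability of relatively free affine PI-algebras over an infinite field together with \Pref{finf}. Everything afterwards is structural bookkeeping; the one point to check is that the conjugations of Step~2 act in pairwise distinct diagonal blocks and so may be carried out simultaneously without disturbing the Wedderburn block form. Note finally, as \Eref{unglued} warns, that the identically glued algebra produced here need not be isomorphic to, nor a surjective image of, the one we started with; we only claim that it lies in $\mathcal V$, which is what the statement requires.
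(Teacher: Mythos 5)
Your argument reaches the correct conclusion and shares the paper's core move: pass from a generating algebra $A$ to its linear closure $KA$, a $K$-algebra in the same variety (by \Pref{finf} and \Lref{samevar}). But it takes a more circuitous route in two places. In Step~1 you invoke Kemer's representability theorem to manufacture a representable generator of $\mathcal V$. The paper's one-line proof already presupposes a representable (indeed Zariski-closed) algebra $A$ generating the variety, which is the standing hypothesis of the surrounding section; Kemer's theorem is a far heavier tool than the statement calls for, and --- as you yourself flag --- it applies cleanly only in the affine case, so for an arbitrary variety this step would leave a residual gap. Read in context, no existence claim needs proving.

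In Step~2 you argue explicitly, via Skolem--Noether, that the gluing isomorphisms $\varphi_u^{\ell,\ell'}$ of the $K$-algebra $KA$ are inner $K$-automorphisms of $\M[n_u](K)$, which can be trivialised by conjugating the matrix units in all but one block of each gluing class. The paper leaves this implicit, relying on \Tref{Frobglue} together with \Rref{glue1}: after the matrix-unit transport performed there, the gluing exponent is a power of $q=\card{F}$, which by the convention of \Dref{FTdef} equals $1$ when $F$ is infinite, so the gluing is already identical. Your Skolem--Noether account is a legitimate, self-contained alternative that in effect re-derives that transport; the observation that the conjugations act in pairwise disjoint diagonal blocks, preserving the block-triangular shape, is the right thing to verify. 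So the approach is the same; the differences are an unnecessary (and, for non-affine varieties, incomplete) appeal to Kemer's theorem, and a more explicit rendering of a step the paper takes for granted.
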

\begin{proof} %
The \fcr\ $AK$ is in the same variety, by \Pref{finf} and
\Lref{samevar}.
\end{proof}

When $F$ is a finite field, we must also contend with the
Frobenius endomorphism, as illustrated in \Eref{unglued}, which we
note is preserved when we pass to the \Zcr.

The point of \Dref{Fgng} is that all corresponding entries in
these blocks are glued in exactly the same way (although not
necessarily by the identity map). This is one way in which the
theory is considerably richer in characteristic $p$  than in
characteristic~$0$.

\begin{thm}\Label{Frobglue}
\Label{glue} Suppose $A \subseteq \M[n](K)$ is a \Zcd\ algebra,
with
$$A/J = A_1 \times \cdots \times A_k,$$ a direct product of $\,k$
simple components. Then we can choose the matrix units of
$\M[n](K)$ in such a way that $A$ has Wedderburn block form, and
all identifications among the diagonal blocks are Frobenius gluing.
\end{thm}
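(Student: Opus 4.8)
The plan is to start from Corollary~\ref{block2}, which already provides matrix units putting $A$ into Wedderburn block form; what remains is to show that the isomorphisms $\varphi_u^{\ell,\ell'}$ between diagonal blocks $A_u^{(\ell)}$ and $A_u^{(\ell')}$ can be arranged to be Frobenius gluing in the sense of Definition~\ref{Fgng}. First I would reduce to a single simple component, i.e.\ fix $u$ and work with the $t(u)$ blocks $A_u^{(1)},\dots,A_u^{(t(u))}$, since blocks for distinct $u$ are never glued (they have either different sizes or non-isomorphic simple quotients, and in any case the gluing relation is an equivalence relation refining the partition of $\{1,\dots,n\}$ coming from $A/J$). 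By \Pref{choice} we may also assume $A$ is maximally \Zcd\ in $B=KA$, and by Remark~\ref{glu00} we may assume $\cha F = p > 0$ (the characteristic $0$ case being trivial, all gluing identical).

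The core of the argument is to analyze the isomorphism $\varphi_u^{\ell,\ell'}\co A_u^{(\ell)}\to A_u^{(\ell')}$. Each $A_u^{(\ell)}$ is a copy of $\M[n_u](F_u)$ with $F_u$ a subfield of $K$, and the diagonal block $\bigoplus_\ell A_u^{(\ell)}$ is the image of $A_u$ under the Zariski-closed embedding $\varphi_u$. Restricting to the center, Remark~\ref{glue1} already tells us (via Theorem~\ref{comZar} applied to the \Zcd\ commutative algebra $\Cent{A}\supseteq \sum_{u,\ell} F_u 1_u^{(\ell)}$) that the identification on centers is given by a Frobenius-type relation ${\la_u^{(\ell)}}_{ii} = ({\la_u^{(\ell')}}_{ii})^q$ for a single $p$-power $q$, uniform in $i$. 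I would then propagate this from the center to the whole block: fixing a set of $n_u\times n_u$ matrix units $e_{ij}^{(\ell)}$ inside $A_u^{(\ell)}$ and comparing with $\varphi_u^{\ell,\ell'}$, write $\varphi_u^{\ell,\ell'}(e_{ij}^{(\ell)}) = \sum_{k,m} c_{ij,km} e_{km}^{(\ell')}$. Using that $\varphi_u^{\ell,\ell'}$ is a ring isomorphism (so preserves the matrix-unit relations $e_{ij}e_{km} = \delta_{jk}e_{im}$) together with the fact that $\varphi_u$ arises from a \emph{polynomial} map $K\to K$ — because \Zcd\ algebras are cut out by polynomial relations and all identifications among components must come from polynomial homomorphisms $K\to K$, which in characteristic $p$ are precisely (sums of) $p$-power Frobenius maps — one deduces that the $c_{ij,km}$ are, up to a change of matrix units, given by a single $p$-power. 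Concretely: the algebra homomorphism $K\to K$ glued off the $(i,j)$ entry must agree with the one glued off the center, namely $\alpha\mapsto\alpha^{q^d}$ for a fixed exponent $d$; absorbing any residual automorphism of $\M[n_u](K)$ (an inner one, by Skolem--Noether, hence realizable by a change of basis within the block) into the choice of matrix units makes $\varphi_u^{\ell,\ell'}$ send $\sum \alpha_{ij}^{q^{d+\kappa}} e_{ij}^{(\ell)}$ to $\sum\alpha_{ij}^{q^\kappa} e_{ij}^{(\ell')}$ for large $\kappa$, which is exactly Frobenius gluing of exponent $d$. Finally one checks that this rechoice of matrix units inside the diagonal blocks does not disturb the block-triangular placement of $J$ (it only conjugates by block-diagonal invertible matrices), so $A$ remains in Wedderburn block form.

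The main obstacle I expect is the coherence step: showing that the \emph{same} $p$-power exponent governs every matrix entry $e_{ij}^{(\ell)}$, not just the diagonal central idempotents, and that the leftover freedom in the isomorphism is genuinely inner (so can be killed by a basis change preserving the Wedderburn block structure rather than introducing new off-diagonal entries). This is where one must carefully use both the multiplicative structure (to force the exponent to be constant across $i,j$, via $e_{ii}e_{ij}e_{jj}=e_{ij}$ and the multiplicativity of Frobenius) and the constraint that everything in sight is defined by polynomial relations over $K$, pinning down the polynomial homomorphism $K\to K$ uniquely up to the $q$-root ambiguity already discussed after Definition~\ref{Fgng}. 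Once that is in hand, extending from $t=2$ glued blocks to arbitrary $t$ within a gluing component is routine, by taking the block of smallest exponent as the reference and normalizing the rest against it (the smallest exponent may be taken to be $0$, as noted after Definition~\ref{Fgng}).
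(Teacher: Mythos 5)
Your proposal is correct, but it takes a noticeably more roundabout route than the paper. The paper's proof starts by \emph{defining} the matrix units of $A_u^{(\ell')}$ to be $e_{ij}^{(\ell')} := \varphi_u^{1,\ell'}(e_{ij})$, where $\{e_{ij}\}$ is a fixed set of matrix units of $A_u^{(1)}$; these are then combined and extended to matrix units of $\M[n](K)$ via \cite[Proposition 1.1.25]{Row2}. With this choice, $\varphi_u^{\ell,\ell'}(e_{ij}^{(\ell)}) = e_{ij}^{(\ell')}$ automatically, so multiplicativity of $\varphi_u^{\ell,\ell'}$ reduces everything to its action on the scalars $\alpha\, 1_u^{(\ell)}$, i.e.\ to the center, and Remark~\ref{glue1} finishes the job. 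The ``coherence step'' you single out as your main obstacle (one exponent governs all entries; leftover freedom is inner and can be absorbed) is exactly what this choice of matrix units dissolves: there is never a residual inner automorphism to contend with, so Skolem--Noether is not invoked at all.

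Your route --- fix matrix units in each block independently, read off the Frobenius exponent from the center via Remark~\ref{glue1}, then use Skolem--Noether to split off the residual $F_u$-algebra automorphism as an inner one and kill it by a block-diagonal basis change --- is a valid alternative, and your observation that block-diagonal conjugation preserves the upper-triangular placement of $J$ (hence the Wedderburn block form) is correct and necessary for that route. The tradeoff: your argument is longer and needs the structure theorem for ring automorphisms of $\M[n_u](F_u)$ plus Skolem--Noether, but it does make explicit \emph{why} the exponent is constant across entries (via $e_{ii}e_{ij}e_{jj}=e_{ij}$), whereas the paper gets this for free by construction. One small imprecision in your write-up: the phrase ``the algebra homomorphism $K\to K$ glued off the $(i,j)$ entry'' isn't literally meaningful until \emph{after} the inner part has been factored off; the cleaner formulation is that any ring isomorphism $\M[n_u](F_u)\to\M[n_u](F_u)$ factors as (field automorphism of $F_u$ applied entrywise) composed with an inner automorphism, and the field-automorphism factor is forced to be Frobenius by the center analysis. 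With that reordering your argument is sound.
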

\begin{proof}
Fix $u = 1,\dots, k$. Fixing a set of $n_u \times n_u$ matrix
units $\{ e_{ij} : 1 \le i,j \le n_u\}$ of $A_u^{(1)}$, we then
have the corresponding set of $n_u \times n_u$ matrix units $\{
\varphi_u^{1, \ell'}(e_{ij}) : 1 \le i,j \le n_u\}$ of
$A_u^{(\ell)}$. We do this for each $u$, and by \cite[Proposition
1.1.25]{Row2} all of these matrix units can be combined and
extended to a set of matrix units for $\M[n](K)$.

Now any matrix $\sum _{i,j =1}^{n_u} \a _{ij} e_{ij}^{(\ell)}$ of
$A_u^{(\ell)}$ is glued (via $\varphi_u^{\ell, \ell'}$) to $$\sum
_{i,j =1}^{n_u} \varphi_u^{\ell, \ell'}(\a _{ij}1_u)^{(\ell)}
e_{ij}^{(\ell')}\in A_u^{(\ell')}.$$
But, by Remark~\ref{glue1},
there is some $q = q (\ell, \ell')$ such that $$\varphi_u^{\ell,
\ell'}(\a \sum _{i =1}^{n_u}  e_{ii}^{(\ell)}) = \varphi_u^{\ell,
\ell'}(\a 1_u^{(\ell)}) = \a ^q 1_u^{({\ell'})} $$ %
(or visa versa, as noted above). Hence $\sum _{i,j =1}^{n_u} \a
_{ij} e_{ij}^{(\ell)}$ is glued to $\sum _{i,j =1}^{n_u} \a
_{ij}^q e_{ij}^{(\ell')}$, as desired.
\end{proof}

\subsection{Standard notation for Wedderburn blocks}\Label{ss:not}

We now change the point of view somewhat, and we write each diagonal
Wedderburn block as $B_1, \dots, B_m$ in the order in which they
appear on the diagonal. Thus $m = \sum_{u=1}^k t(u)$, where $t(u)$
is the number of blocks in the \th{u} glued component. Likewise
for $r<s$ we define the block $B_{rs} = B_r A B_s$. Any $B_{rs}$
can be viewed as a matrix block; in particular $B_{rr} = B_r$.
{}From this point of view, $B_{rs}$ is a $B_{r},B_{s}$-bimodule.
Each $B_r$ is a subalgebra of $\M[n](K)$, although in general it
is not contained in $A$. Letting $B = \sum_{r\leq s} B_{rs}$, we
have the following inclusions:
$$A \sub KA \sub B \sub \M[n](K).$$
\begin{exmpl}\Label{4.16}
When $F$ is finite, $A = \set{\smat{\alpha}{b}{0}{\alpha}
\suchthat \alpha \in F,\, b \in K}$ is in Wedderburn block form.
Then $$KA = \set{\smat{a}{b}{0}{a} \suchthat a, b \in K}$$
and $B
=\set{\smat{a}{b}{0}{a'} \suchthat a, a', b \in K}$, so the
inclusions $A \subset KA \subset B \subset \M[2](K)$ are all
strict.
\end{exmpl}

Let $T_1 \cup \cdots \cup T_k$ be the gluing partition of
$\set{1,\dots,m}$, namely $r \in T_u$ if, in the notation of
\Dref{wbf}, $B_r = A_{u}^{(\ell)}$ for some $\ell = 1,\dots,t(u)$.
Thus the \th{u} component of $A/J$ embeds as $\varphi_u \co A_u
\ra \bigoplus_{r \in T_u} B_{r}$. We let $\tau \co \set{1,\dots,m}
\ra \set{1,\dots,u}$ denote the quotient map, associating to every
index $r$ the gluing class $u$ of the block $B_r$; thus $r \in
T_{\tau(r)}$.

As always, $A$ is an algebra over a field $F$ of order $q$, a
prime power. (We also permit $F$ to be infinite, although this
case is easier.) We write $F_u$ for the field of scalar matrices
of $B_{rr}$, where $u = \tau(r)$. When finite, $\card{F_u} =
q^{d_u}$ for some number $d_r$.

\begin{rem}\Label{thisone}
Suppose $B_{rr}$ and $B_{ss}$ are glued blocks, with center of
order $q^{d_u}$ for $u = \tau(r) = \tau(s)$. If $B_{rr}$ and
$B_{ss}$ are glued via the Frobenius endomorphism $a \mapsto
a^{q^d},$ note that $d$ is only well defined modulo $d_u$.
\end{rem}

\subsection{Sub-Peirce decomposition and the \fcr}\Label{ss:sP}

Given a \Zcd\ algebra $A$ over $F$, represented in $\M[n](K)$, for
$K$ infinite, we have the primitive idempotents $\hat e_u$ of $A$
($u = 1,\dots,k$), which give rise to the Peirce decomposition $A
= \bigoplus \hat{e}_u A \hat{e}_v$.

Each idempotent decomposes as a sum $\hat e_u = \sum_{r \in T_u}
e_r$ of idempotents of $\M[n](K)$, where $T_u$ are defined in the
previous subsection, and we have the Peirce decomposition $B
=\bigoplus e_r A e_s$. This is a fine decomposition, and in general,
$e_r A e_s$ is not contained in $A$. Nevertheless, we do have the
following observation.

\begin{rem}\Label{forsepar}
Suppose $A = \prod _{i=1}^m F_i$ is a commutative semisimple
algebra and $a = (\a _i) \in A$ is written as $a = \sum a_j$, where
each $a_j$ is the sum of those Frobenius components of $a$ that
are glued. Then each $a_j \in \cl A$.
\end{rem}

\begin{defn}\Label{idemptype}
A primitive idempotent $\hat{e}$ of $A$ is of {\bf finite} (resp.\
{\bf infinite}) type if the base centers $F_u$ of the
corresponding glued blocks $e_r \M[n](K) e_r$ are finite (resp.\
infinite).
\end{defn}

Note that by \Pref{finf}, $F_u \cong K$ for any idempotent of
infinite type, although there may fail to be a natural action of
$K$ on the $F_u$ because of non-identity gluing.

\begin{exmpl}\Label{4.20}
\begin{enumerate}
\item The primitive idempotents of $$A =
\left\{\left(\begin{array}{ccc}
\alpha & 0 & x \\ %
0 & \beta & y \\ %
0 & 0 & \alpha^q
\end{array}\right): \alpha, \beta, x, y \in K\right\}$$
are $e_{11}+e_{33}$ and
$e_{22}$. Numerating the blocks into gluing components by setting
$T_1 = \set{1,3}$ and $T_2 = \set{2}$, we have that $F_1, F_2
\cong K$; however, scalar multiplication by $K$ does not preserve
$F_2$. Indeed, $A$ is not a $K$-algebra: $\dim(A) = 4$, while
$\dim(KA) = 5$.

\item Let $A = \left\{\left(\begin{array}{cccc}
\alpha & x & y & \lam x\\ %
0 & \beta & z & 0\\ %
0 & 0 & \alpha & 0 \\
0 & 0 & 0 & \beta
\end{array}\right): \alpha, \beta, x,y,z \in K\right\}$, where
$\lam \in K$ is fixed. The glued blocks are $T_1 = \set{1,3}$ and
$T_2 = \set{2,4}$. Accordingly, $A = A_{11}\oplus A_{12}\oplus
A_{21}\oplus A_{22}$, where $A_{11} = K(e_{11}+e_{33})+Ke_{13}$,
$A_{12} = K(e_{12}+\lam e_{14})$, $A_{21} = K e_{23}$ and $A_{22}
= K(e_{22}+e_{44}) + K e_{24}$.
\end{enumerate}
\end{exmpl}

We would like to refine this description by comparing the
Wedderburn decompositions of $A$ and its \fcr\ $KA$ (which may
have more primitive idempotents), even though $A$ and $KA$ need
not be PI-equivalent.

Break every gluing class $T_u$ ($u = 1,\dots,k$) into a disjoint
union $T_u = T_u^{(1)} \cup \dots \cup T_u^{(c_u)}$, where blocks
$B_r, B_s$ are in the same component $T_u^{(\mu)}$ if and only if
they are glued by an identical gluing. For example, in
\Eref{4.20}(1) the decomposition is $T_1 = \set{1} \cup \set{3}$.
The idempotents $\hat{e}_u$ decompose, accordingly, as
\begin{equation}\Label{hatbar}
\hat{e}_u = \sum_{\mu = 1}^{c_u} \bar{e}_u^{(\mu)},
\end{equation}
where $\bar{e}_u^{(\mu)} = \sum_{r \in T_u^{(\mu)}} e_r$. Although
$\bar{e}_u^{(\mu)}$ are not in $A$, these elements do belong to
$KA$ (since $K$ is infinite, allowing for a Vandermonde argument).
Therefore, we define:

\begin{defn}
The {\bf sub-Peirce decomposition} of $A$ is the restriction to
$A$ of the Peirce decomposition of $KA$; \cf~equation~(\ref{PDM}).
Namely,
$$A \subseteq \bigoplus A_{uv}^{(\mu\mu')}, \qquad A_{uv}^{(\mu \mu')}
    = \bar{e}_u^{(\mu)} A \bar{e}_{v}^{(\mu')},$$
where the sum ranges over $u,v = 1,\dots,k$, $\mu = 1,\dots,c_u$
and $\mu' = 1,\dots,c_v$. We stress once more that this is not a
decomposition of $A$, as the $A_{uv}^{(\mu\mu')}$ are contained in
$KA$, but not in $A$ in general.
\end{defn}

Notice that if all the gluing in $A$ are via the identity map, in
particular (by \Pref{pass}), if $A$ is a $K$-algebra, then the
sub-Peirce decomposition is identical to the Peirce decomposition.

\begin{exmpl}\Label{4.21}
Let $A = \left\{\left(\begin{array}{cccc}
\alpha & x & y & z\\ %
0 & \alpha^q & x' & y'\\ %
0 & 0 & \alpha & x'' \\
0 & 0 & 0 & \alpha^{q}
\end{array}\right): \alpha, x,x',x'',y,y',z \in K\right\}$.
There is one glued component, namely $T_1 = \set{1,2,3,4}$, which
decomposes with respect to identical gluing as $T_1 =
\set{1,3}\cup \set{2,4}$. The corresponding idempotent
decomposition is $\hat{e}_1 = \bar{e}_1^{(1)} + \bar{e}_1^{(2)}$,
where $\hat{e}_1 = 1$, $\bar{e}_1^{(1)} = e_{11}+e_{33}$ and
$\bar{e}_1^{(2)} = e_{22}+e_{44}$. The sub-Peirce components are
$A_{11}^{(11)} = K\bar{e}_1^{(1)} + K e_{13}$, $A_{11}^{(12)} = K
e_{12}+Ke_{14}+Ke_{34}$, $A_{11}^{(21)} = Ke_{23}$ and
$A_{11}^{(22)} = K\bar{e}_1^{(2)} + Ke_{24}$ (similarly to the
Peirce components in \Eref{4.20}(2)).
\end{exmpl}

{}From one point of view, the \fcr\ erases all the subtlety
introduced by the finiteness of $F$, as we see in the next
observation.

\begin{rem}\Label{gluetype} Identity gluing in $A$ is preserved in $KA$;
however, it may
happen that $A = \cl{A}$ and $A$ has only identical gluing, while
$A \subset KA$ (see \Eref{4.16}).

On the other hand, non-identity (Frobenius) gluing  for $A$ is
unglued in $KA$, as seen by applying a Vandermonde argument since $K$
is infinite. Thus $KA$ only has identical gluing.

Viewed in terms of the Peirce decomposition, a Peirce component
$A_{uu}$ of $A$ may ramify in $KA$, and the corresponding
primitive idempotent in $A$ becomes a sum of orthogonal
idempotents in $KA$. Thus, the sub-Peirce decomposition of $A$
consists of identity-glued components of the Peirce decomposition
of $KA$.

{}From the point of view of PI's, $KA$ satisfies all {\it
multilinear} identities of $A$, although it may lose identities
arising from Frobenius automorphisms, such as $x^2y-yx$ in the
example  $A = \set{\smat{a}{b}{0}{a^2} \suchthat a, b \in \F_4}$.
\end{rem}

It turns out that non-identical gluing permits us to refine the
decomposition further, and this is our next goal.

\subsection{Relative exponents}\Label{ss:rd}

\begin{defn}\Label{reldeg}
Let $B_r$ and $B_{r'}$ be two glued blocks (whose centers thus
have the same cardinality). By \Tref{Frobglue}, we may assume the
blocks are glued by Frobenius gluing of some exponent (\cf\
\Dref{Fgng}), which we denote as $\exp(B_{rr'})$ and call the
{\emph{relative Frobenius exponent}} of $B_{rr'}$. This is
understood to be zero if $F$ is infinite. In fact, $\exp(B_{rr'})$
is only well defined modulo the dimension of $F_r = F_{r'}$ over
$F$ (where we interpret `modulo infinity' as a mere integer).
\end{defn}

The relative Frobenius exponents are used to define equivalence
relations on vectors of glued indices, as follows.
\begin{defn}\Label{eqdef}
Recall the definition of $T_u$ from Subsection~\ref{ss:not}. For every
$1\leq u,v \leq k$, we let $T_{u,v} = \set{(r,s) \in T_u \times
T_v \suchthat r\leq s}$, and define an equivalent relation on
$T_{u,v}$ by setting $(r,s) \sim (r',s')$ iff $\exp(B_{rr'})
\equiv \exp(B_{ss'})$ modulo $\gcd(d_u,d_v)$. (Recall that $d_u$
is the dimension of the center of the \th{u} component over $F$.)

More generally, for every $t$-tuple $1 \leq u_1,\dots,u_t \leq k$,
we set $$T_{u_1,\dots,u_t} = \set{(r_1,\dots,r_t) \in
T_{u_1}\times \cdots \times T_{u_t} \suchthat r_1 \leq \cdots \leq
r_t},$$ and define an equivalence relation on $T_{u_1,\dots,u_t}$
by $(r_1,\dots,r_t) \sim (r_1',\dots,r_t')$ if the values
$\exp(B_{r_1^{\,} r_1'}), \dots, \exp(B_{r_t^{\,} r_t'})$ are all
equivalent modulo $\gcd(d_{u_1},\dots,d_{u_t})$. We call $t$ the
\defin{length} of $\gamma$.
\end{defn}

\begin{rem}\Label{trans}
\begin{enumerate}
\item Relative exponents can be computed with respect to a fixed
block in the gluing component, \ie\ $\exp(B_{rr'}) =
\exp(B_{r_0r'}) -\exp(B_{r_0r})$. In particular, $\exp(B_{rr'})=
-\exp(B_{r'r})$, and $\exp(B_{rr''}) =
\exp(B_{rr'})+\exp(B_{r'r''})$. \item In a `diagonal' set
$T_{u,u}$, $\exp(B_{rr'}) \equiv \exp(B_{ss'})$ iff $\exp(B_{rs})
\equiv \exp(B_{r's'})$, and so one can read the equivalence
relation directly from the matrix of relative exponents. \item
Likewise for any $t$, if $u_1 = \cdots = u_t = u$, then the
equivalence relation on $T_{u,\dots,u}$ is given as follows:
$(r_1,\dots,r_t) \sim (r_1',\dots,r_t')$ iff $\exp(B_{r_i
r_{i+1}}) \equiv \exp(B_{r_i'r_{i+1}'})$ modulo $d_u$ for $i =
1,\dots,t-1$. \item If $B_{r}$ and $B_{\bar{r}}$ are identically
glued, then obviously $\exp(B_{rs}) = \exp(B_{\bar{r}s})$ for any
$s$. In particular, $(r_1,\dots,r_{i-1},r,r_{i+1},\dots,r_t) \sim
(r_1,\dots,r_{i-1},\bar{r},r_{i+1},\dots,r_t)$ whenever $r_{i-1}
\leq r,\bar{r} \leq r_{i+1}$.
\end{enumerate}
\end{rem}

A word of caution: When all the Peirce idempotents in a given
sub-Peirce vector $(u_1, \dots, u_t)$ correspond to fields
$F_{u_i}$ of the same size (such as all having infinite type),
then the equivalence class of this vector is uniquely determined
by the equivalence classes of the pairs $(u_i, u_{i+1})$. However,
this can fail when the $F_{u_i}$ have differing sizes (such as
some finite and some infinite), because of the ambiguity arising
from the differing exponents of the Frobenius automorphisms.

If $(r_i,r_{i+1}) \sim (r_i',r_{i+1}')$ for each $i =
1,\dots,t-1$, then the respective relative exponents are
equivalent modulo $\gcd (d_i,d_{i+1})$, so in particular they are
all equivalent modulo $\gcd(d_1,\dots,d_t)$, and so
$(r_1,\dots,r_{t}) \sim (r_1',\dots,r_{t}')$.

On the other hand, equivalence modulo $\gcd(d_1,\dots,d_t)$ does
not in general imply any equivalence modulo $\gcd(d_i,d_{i+1})$,
so, for example, $(r_1,r_2,r_3) \sim (r_1',r_2',r_3')$ does not
even imply $(r_1,r_2) \sim (r_1',r_2')$.

\begin{defn}\Label{defcomp}
We define the \defin{composition} of equivalence classes, in the
spirit of matrix units, as follows. Suppose $\gamma \sub
T_{u_1,\dots,u_t}$ and $\gamma' \sub T_{v_1,\dots,v_{t'}}$. If
$u_t \neq v_1$, let $\gamma
* \gamma' = \emptyset$; and if $u_t = v_1$, let $$\gamma * \gamma'
= \set{(r_1,\dots,r_{t-1},r_t,s_2,\dots,s_{t'}) \suchthat
(r_1,\dots,r_{t-1},r_t) \in \gamma, \, (r_t,s_2,\dots,s_{t'}) \in
\gamma'}.$$ The composition $\gamma \circ \gamma'$ is defined as
the set of equivalence classes (of length $t+t'-1$) of
$T_{u_1,\dots,u_t,v_2,\dots,v_{t'}}$ which are contained in
$\gamma * \gamma'$.
\end{defn}

\subsection{The relative Frobenius decomposition}

Let $a \in A$ be an element in a Peirce component $\hat{e}_u A
\hat{e}_v$ of $A$. Applying the matrix block component
decomposition $a = \sum_{(r,s) \in T_{u,v}} a_{rs}$ (with $a_{rs}
\in B_{rs}$), we have $a = \sum a^{\gamma}$, where %
\begin{equation}\Label{agamma}
a^{\gamma} = \sum_{(r,s) \in \gamma} a_{rs} \end{equation} and
$\gamma$ ranges over the equivalence classes of $T_{u,v}$ defined
in \Dref{eqdef}. Thus, letting $A^{\gamma} = \set{a^{\gamma}
\suchthat a \in \hat{e}_u A \hat{e}_v}$, we have the
\defin{relative Frobenius decomposition}
\begin{equation}\Label{relFd}
A = \sum_{1\leq u,v \leq k} \,\sum_{\gamma \sub T_{u,v}} A^{\gamma}.
\end{equation}

\begin{prop}\Label{QQQ}
If $a \in \cl A$, then $a^\gamma \in \cl A$ for each equivalence
class $\gamma$ in every $T_{u,v}$.
\end{prop}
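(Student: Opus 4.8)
The plan is to reduce the claim to the fact that $\cl{A}$ is an algebra together with the availability of suitable idempotents inside $\cl{A}$. First I would observe that, since $a \in \hat{e}_u A \hat{e}_v$, it suffices to extract each equivalence-class component $a^\gamma$ of $a$ by multiplying $a$ on the left and right by appropriate elements of $\cl{A}$. The key point is that the equivalence relation on $T_{u,v}$ defined in \Dref{eqdef} is exactly detected by relative Frobenius exponents, and the identically-glued blocks in a fixed gluing class $T_u$ give rise to the idempotents $\bar{e}_u^{(\mu)}$ of \Eq{hatbar}, which lie in $KA$; more relevantly, I would argue that for each equivalence class $\gamma \sub T_{u,v}$ one can produce an idempotent $f_\gamma^{\mathrm{left}}$ which is a sum of $e_r$ over the first coordinates of pairs in $\gamma$ and an idempotent $f_\gamma^{\mathrm{right}}$ which is a sum of $e_s$ over the second coordinates, such that $f_\gamma^{\mathrm{left}} \, a \, f_\gamma^{\mathrm{right}}$ picks out precisely $a^\gamma$ \emph{and} such that these idempotents lie in $\cl{A}$ (not merely in $KA$).

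The heart of the matter is that last parenthetical claim, and I expect it to be the main obstacle: the $e_r$ individually need not be in $\cl{A}$, and it is not even true that arbitrary sums of the $\bar{e}_u^{(\mu)}$ lie in $\cl{A}$ (cf.\ \Eref{4.16}, where $KA \neq \cl{A} = A$). What saves us is \Rref{forsepar}: if $a = (\alpha_i)$ is an element of a commutative semisimple algebra $\prod F_i$ and we group together the Frobenius-glued components, then each such grouped sum lies in $\cl{A}$. Applying this to $\Cent{\cl{A}}$ (which is \Zcd\ by \Lref{centcl}, and semisimple semiprime, hence commutative semisimple, since $\cl{A}$ is semiperfect with $\Rad(\cl{A}) = \Rad(A)$), the central idempotents obtained by collecting blocks glued with a \emph{fixed} relative exponent modulo $\gcd(d_u,d_v)$ are exactly the central idempotents of $\cl{A}$ needed here. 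So I would first prove: for each $u$ and each residue class $d \bmod d_u$, the sum $\sum e_r$ over those $r \in T_u$ with $\exp(B_{r_0 r}) \equiv d$ lies in $\cl{A}$, using \Rref{forsepar} applied to the center. (The infinite-type case is handled separately, but is easier: there $d_u = \infty$, only identical gluing occurs by \Rref{gluetype}-style reasoning, and the relevant idempotent is $\bar{e}_u^{(\mu)} \in KA = \cl{A}$ by \Pref{finf}.)

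Granting those idempotents, the rest is routine. Write $g_u^{(d)} \in \cl{A}$ for the idempotent just constructed (sum of $e_r$, $r \in T_u$, with relative exponent $\equiv d$). For an equivalence class $\gamma \sub T_{u,v}$, by \Rref{trans}(3) membership $(r,s) \in \gamma$ is determined by the residue of $\exp(B_{r_0 r})$ and of $\exp(B_{s_0 s})$ (relative to fixed base blocks $r_0 \in T_u$, $s_0 \in T_v$), subject to a compatibility forced by the fixed offset $\exp(B_{r_0 s_0})$; so $\gamma$ corresponds to a pair of residues $(d, d')$ and we get $a^\gamma = g_u^{(d)} \, a \, g_v^{(d')}$, which is a product of three elements of $\cl{A}$ — namely $g_u^{(d)}$, then $a$, then $g_v^{(d')}$ — hence lies in $\cl{A}$ because $\cl{A}$ is an $F$-subalgebra of $B$ (Theorem on $\cl{A}$ being a subalgebra, part \eqref{Sii}). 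Finally, summing over the finitely many residues recovers $a = \sum_\gamma a^\gamma$, consistent with \Eq{agamma}, completing the proof. The only genuinely delicate step remains the verification that the $g_u^{(d)}$ are in $\cl{A}$; everything downstream is bookkeeping with \Rref{trans}.
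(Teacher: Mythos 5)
Your plan hinges on producing idempotents $g_u^{(d)} \in \cl A$ that separate the blocks inside a gluing class $T_u$ by the residue of the relative exponent, and then recovering $a^\gamma$ as $g_u^{(d)}\,a\,g_v^{(d')}$. The first step fails, and the failure is fatal. Partitioning $T_u$ by the residue of $\exp(B_{r_0 r}) \bmod d_u$ is, by definition, the partition into identical-gluing components $T_u^{(\mu)}$, so your $g_u^{(d)}$ is exactly the idempotent $\bar{e}_u^{(\mu)}$ of \Eq{hatbar} --- and the paper records explicitly, just after that equation, that the $\bar{e}_u^{(\mu)}$ need \emph{not} lie in $\cl A$; they only belong to $KA$, where a Vandermonde argument over the infinite $K$ becomes available. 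A minimal counterexample: $A = \cl A = \set{\smat{\alpha}{0}{0}{\alpha^q}\suchthat \alpha\in K}$, with $F$ finite of order $q$ and $K$ its algebraic closure. Here $T_1 = \set{1,2}$ with exponent-$1$ gluing, so $T_1^{(1)} = \set{1}$ and $T_1^{(2)} = \set{2}$, and your $g_1^{(0)} = e_{11}$, $g_1^{(1)} = e_{22}$ are plainly not in $\cl A$. Remark~\ref{forsepar} cannot rescue this: it yields only the coarser central idempotents $\hat{e}_u$ (collecting \emph{all} Frobenius-glued blocks of a fixed $u$, irrespective of the exponent), so in the counterexample it gives $\hat{e}_1 = 1$ and nothing finer.

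A secondary slip would surface next even if the first were fixed. By \Dref{eqdef}, the equivalence relation on $T_{u,v}$ is governed by the single invariant $\exp(B_{r_0 r}) - \exp(B_{s_0 s}) \bmod \gcd(d_u,d_v)$, not by a pair of residues, so a class $\gamma$ is generally not a rectangle; in \Eref{4.25}, for instance, the exponent-$0$ class $\set{(1,1),(1,3),(2,2),(2,4),(3,3),(4,4)}$ is a union of two rectangles, and the paper records $A^\gamma = A_{11}^{(11)} + A_{11}^{(22)}$ accordingly. The products $g_u^{(d)} a g_v^{(d')}$, if available, would reproduce the sub-Peirce pieces $A_{uv}^{(\mu\mu')}$, which the paper stresses lie only in $KA$ --- indeed, the fact that the sub-Peirce decomposition is strictly finer than what $\cl A$ can support is precisely the phenomenon this proposition is designed to negotiate, so an argument that appears to put every sub-Peirce piece in $\cl A$ should have set off an alarm. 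The paper's own proof avoids internal idempotents altogether: it packs the block-coefficient pattern of $a$ into an auxiliary commutative semisimple $F$-algebra $M = \bigoplus M_{rs}$, observes via Remark~\ref{glu00} that $\cl M$ admits only Frobenius gluing, and reads off $a^\gamma \in \cl A$ from the structure of that closure.
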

\begin{proof}
We need to show that each glued component $a^\gamma$ is in $\cl
A$. Let $M_{rs}$ be the $F$-subspace of $\M[n](K)$ spanned by the
$a_{rs}e_{rs}$. The natural vector space isomorphism $F \to
M_{rs}$ can be used to transfer the algebra structure of $F$ to
$M_{rs}$; i.e., $$(\a a_{rs}e_{rs})(\beta a_{rs}e_{rs}) = \a \beta
a_{rs}e_{rs}.$$ Note that each $M_{rs}$ is closed under these
operations. With respect to this structure, $M = \bigoplus
_{r,s}M_{rs}$ becomes a commutative $F$-algebra (defining the
operations componentwise), and its subalgebra $\tilde M$
corresponding to $a^\gamma$ is semiprime \Zcd, and thus has only
Frobenius gluing, in view of Remark \ref{glu00}. But this implies
$\tilde M \subseteq \cl A$, as desired.
\end{proof}

We see that the relative Frobenius decomposition is finer than the
Peirce decomposition, but coarser than the sub-Pierce
decomposition (which, strictly speaking, is not a decomposition of
$A$, since the components only belong to $KA$).

\begin{cor}\Label{offglue}
Two blocks of the \fcr\ $KA$ are in the same component in the
sub-Peirce decomposition iff they are identically glued. Thus, in
the relations defining the algebra $A$, any two off-diagonal
blocks $B_{rs}$ and $B_{r's'}$ with $(r,s) \not \sim (r',s')$ can
be separated (see \Rref{directsum} below).
\end{cor}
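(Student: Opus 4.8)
The plan is to establish the corollary in two steps, corresponding to its two sentences. For the first sentence, I would argue that the sub-Peirce decomposition of $A$ is, by its very definition, the restriction to $A$ of the Peirce decomposition of $KA$ coming from the idempotents $\bar{e}_u^{(\mu)}$, and recall that the $T_u^{(\mu)}$ were constructed precisely so that two blocks $B_r, B_s$ lie in the same $T_u^{(\mu)}$ if and only if they are glued by an identical gluing. So the statement ``two blocks of $KA$ lie in the same sub-Peirce component iff they are identically glued'' is essentially unwinding \eqref{hatbar} together with the defining property of the partition $T_u = T_u^{(1)} \cup \dots \cup T_u^{(c_u)}$. The one thing that needs a word is that a block ``of $KA$'' in the first sentence means a block $B_r$ of $A$ in Wedderburn block form, regarded inside $KA$; since $KA$ is obtained by taking closures componentwise (Corollary~\ref{block2}) and all Frobenius gluing unglues in $KA$ by the Vandermonde argument recorded in \Rref{gluetype}, the blocks of $KA$ are exactly the $\bar{e}_u^{(\mu)} KA \bar{e}_u^{(\mu)}$, i.e. the identity-glued groups of the $B_r$'s. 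This gives the ``iff''.

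For the second sentence, suppose $(r,s)\not\sim (r',s')$ in $T_{u,v}$, so that $\exp(B_{rr'}) \not\equiv \exp(B_{ss'})$ modulo $\gcd(d_u,d_v)$. I want to show $B_{rs}$ and $B_{r's'}$ are separated, i.e. there are no polynomial relations of the algebra forcing an identification between the entries of these two off-diagonal blocks. The key tool is \Pref{QQQ}: since $A$ is \Zcd, each relative Frobenius component $A^\gamma$ lies in $A$, and \eqref{relFd} exhibits $A$ as the (internal) sum of the $A^\gamma$. Because $(r,s)$ and $(r',s')$ fall into \emph{different} equivalence classes $\gamma \ne \gamma'$ of $T_{u,v}$, the blocks $B_{rs}$ and $B_{r's'}$ contribute to $A^\gamma$ and $A^{\gamma'}$ respectively, and \Pref{QQQ} says the corresponding projections of any element of $A$ are again in $A$. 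Hence any relation of $A$ that mixed coordinates of $B_{rs}$ with coordinates of $B_{r's'}$ would already be implied by relations internal to $A^\gamma$ and $A^{\gamma'}$ separately — so no genuine gluing relation links them, which is precisely what ``separated'' means (cf.\ \Rref{directsum}). More carefully, I would note that within a single $A^\gamma$ all the blocks $B_{rs}$ for $(r,s)\in\gamma$ are glued by the \emph{same} Frobenius exponent (modulo $\gcd(d_u,d_v)$), which is the content of the equivalence defining $\gamma$; and across distinct $\gamma$'s there is nothing to glue, since the exponents disagree and a single polynomial homomorphism of $K$ cannot realize two incongruent Frobenius powers simultaneously.

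The main obstacle I anticipate is being precise about what ``separated'' means and why the decomposition in \eqref{relFd} into closed pieces is exactly the obstruction to gluing: one must check that a polynomial relation of $A$ which involves coordinates from both $B_{rs}$ and $B_{r's'}$ but does not follow from the relations of the individual pieces cannot exist, and this is where \Pref{QQQ} does the real work — it guarantees that the ``diagonal-type'' relations that would express gluing are already accounted for, component by component. A secondary subtlety, worth flagging explicitly, is the ambiguity noted in the ``word of caution'' preceding \Dref{defcomp}: when $F_u$ and $F_v$ have different sizes the pairwise equivalence does not refine to longer tuples, but for the present statement only pairs $(r,s)$ are compared, so $\gcd(d_u,d_v)$ is the correct modulus and no higher-length ambiguity intervenes. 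Once these points are made, the corollary follows by combining the definitional first part with the application of \Pref{QQQ} in the second.
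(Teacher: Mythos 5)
Your proposal is correct and follows the route the paper intends. The first sentence really is definitional once you observe (as you do) that Frobenius gluing unglues in $KA$ by the Vandermonde argument of \Rref{gluetype}, so the identity-glued groups $T_u^{(\mu)}$ are exactly the blocks of $KA$. For the second sentence, the clean way to state the mechanism is: by \Pref{QQQ} the relative Frobenius decomposition \eqref{relFd} is a direct sum of $F$-subspaces $A^\gamma$ of $A$ (as the paper remarks just before \Rref{nest}), with $B_{rs}$ and $B_{r's'}$ landing in distinct summands $A^\gamma$ and $A^{\gamma'}$ when $(r,s)\not\sim(r',s')$; then \Rref{directsum} says every polynomial relation of $A$ splits as a sum of relations of the individual summands, so no relation couples coordinates of $B_{rs}$ with those of $B_{r's'}$, i.e., they are separated. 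Your closing remark that ``a single polynomial homomorphism of $K$ cannot realize two incongruent Frobenius powers simultaneously'' is not needed once the direct-sum plus \Rref{directsum} argument is in place, and your flag about the word of caution preceding \Dref{defcomp} is well taken but harmless here, since only pairs are compared.
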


\begin{rem}\Label{moved}
Suppose $f$ is a relation of weak Frobenius type on $A$ whose
variables involve the blocks $B_{r_1, s_1}, \dots, B_{r_\nu,
s_\nu}$. Assume $f$ cannot be concluded from relations on the same
blocks, with fewer variables. Then the following facts hold:
\begin{enumerate}
\item The diagonal blocks $B_{r_1}, \dots , B_{r_\nu}$ are glued.
(Indeed, suppose some $a \in A$ satisfies the weak Frobenius
relation
$$\sum _{i=1}^n \sum _{j\ge 1} c_{ij} (a^{i,j}_{r,s})^{q_{r,s}} =
0.$$ %
If some $B_r$ and $B_{r'}$ are not glued, then we have a diagonal
element $d\in A$ with the identity $1$ in the $B_r$ block and $0$
in the $B_{r'}$ block. Then $da=0$, contrary to the minimality of
the quasi-linear relation defining the gluing.) Thus, applying
some graph theory cuts down the number of generating polynomial
relations even further.

\item Each diagonal block $B_r$ is defined over a field whose
order is at most the maximal $q_{r,s}$. (Multiply diagonal
elements of $A$ by the elements in the given glued components of
$A$, and apply a Vandermonde argument.)
\end{enumerate}
\end{rem}

\begin{exmpl}\Label{4.25}
Let $A$ be the algebra of \Eref{4.21}. For $u = v = 1$, the
relative exponents of $B_{rs}$ is the \th{(r,s)} entry in the
antisymmetric matrix $\left(\begin{array}{cccc}
0 & 1 & 0 & 1\\ %
\cdot & 0 & -1 & 0\\ %
\cdot & \cdot & 0 & 1 \\
\cdot & \cdot & \cdot & 0
\end{array}\right)$; see \Rref{trans}(2). The equivalence
relation on $T_{1,1}$ has classes $\set{(1,2),(1,4),(3,4)}$,
$\set{(2,3)}$ and $\set{(1,1),(1,3),(2,2),(2,4),(3,3),(4,4)}$. In
the notation of \Eref{4.21}, the relative Frobenius decomposition
is
$$A = ( A_{11}^{(11)} + A_{11}^{(22)} ) \oplus A_{11}^{(12)}
\oplus A_{11}^{(21)}.$$
\end{exmpl}

\begin{exmpl}
Now take $A = \left\{\left(\begin{array}{cccc} %
\alpha & x & y & z\\ %
0 & \beta^q & x' & y'\\ %
0 & 0 & \beta & x'' \\
0 & 0 & 0 & \alpha^{q}
\end{array}\right): \alpha, \beta, x,x',x'',y,y',z \in K\right\}$.
There are two glued components, $T_1 = \set{1,4}$ and $T_2
=\set{2,3}$. The non-diagonal relative Frobenius exponents are
$\exp(B_{14}) = 1$ and $\exp(B_{23}) = -1$, as in \Eref{4.25}. The
equivalence components are $T_{1,1} = \set{(1,1),(4,4)} \cup
\set{(1,4)}$, $T_{1,2} = \set{(1,2)}\cup \set{(1,3)}$, $T_{2,1} =
\set{(2,4)}\cup \set{(3,4)}$ and $T_{2,2} = \set{(2,2),(3,3)} \cup
\set{(2,3)}$. For this algebra, the relative Frobenius
decomposition recaptures the full sub-Peirce decomposition.
\end{exmpl}

\subsection{Higher length decomposition}
The same proof as in \Pref{QQQ} yields the following more
intricate result:
\begin{rem}\Label{abitmore}
Let $t \geq 2$ and $1\leq u_1,\dots,u_t \leq k$. Let $a =
\hat{e}_{u_1} a_1 \hat{e}_{u_2} \cdots \hat{e}_{u_{t-1}} a_{t-1}
\hat{e}_{u_t} \in \hat{e}_{u_1} A \hat{e}_{u_2} \cdots
\hat{e}_{u_{t-1}} A \hat{e}_{u_t}$, where $a_1,\dots,a_{t-1} \in
A$.

For every equivalence class $\gamma \sub T_{u_1,\dots,u_t}$ (as
defined in \Dref{eqdef}) and $a$ as above, let $a^{\gamma} =
\sum_{(r_1,\dots,r_t) \in \gamma} {e_{r_1} a_1 e_{r_2} \cdots
e_{r_{t-1}} a_{t-1} e_{r_t}}$. Then $a^{\gamma} \in A$ and
(clearly) $a = \sum_{\gamma \sub T_{u_1,\dots,u_t}} a^{\gamma}$.
Writing $a_i = \sum_{rs} \alpha^{(i)}_{rs}e_{rs}$, we have that
\begin{equation}\Label{form}
a^{\gamma} = \sum_{(r_1,\dots,r_t) \in \gamma}
\alpha_{r_1r_2}^{(1)} \cdots \alpha_{r_{t-1}r_t}^{(t-1)}
e_{r_1r_t}.\end{equation}

Letting $A^{\gamma}  = \set{a^{\gamma} \suchthat a \in
\hat{e}_{u_1} A \hat{e}_{u_2} \cdots \hat{e}_{u_{t-1}} A
\hat{e}_{u_t}}$ (where we implicitly take advantage of the fact
that $\gamma$ determines $(u_1,\dots,u_t)$), we have the $t$-fold
relative Frobenius decomposition
\begin{equation}\Label{compg} A = \sum_{1\leq u_1,\dots,u_t \leq
k} \,\sum_{\gamma \sub T_{u_1,\dots,u_t}} A^{\gamma}.%
\end{equation}
\end{rem}

While \Eq{relFd} is clearly a direct sum, this is no longer the
case for $t \geq 2$ in \eq{compg}, as demonstrated in \Eref{six}
below.

\begin{rem}\Label{nest}
The decomposition of \Rref{abitmore} becomes finer as the length
of the classes increases. Indeed, for $\gamma$ of length $t+1$ and
$1 < i < t+1$, let $\pi_i$ denote the map $\set{1,\dots,m}^{t+1}
\ra \set{1,\dots,m}^{t}$, forgetting the \th{i} entry. If $\gamma
\sub T_{u_1,\dots,u_{t+1}}$, then $\pi_i(\gamma)$ is contained in
an equivalence class of $T_{u_1,\dots,u_{i-1},u_{i+1},\dots,
u_{t+1}}$. For any equivalence class $\hat{\gamma} \sub
T_{u_1,\dots,u_t}$, one uses $A = \sum A \hat{e}_{v} A$ to show
that
%
$$A^{\hat{\gamma}} = \sum_{v=1}^{k} \sum_{\gamma} A^{\gamma},$$
where, for each $v$, $\gamma$ ranges over all equivalence classes
of $T_{u_1,\dots,u_{i-1},v,u_i,\dots,u_t}$ such that
$\pi_i(\gamma) = \hat{\gamma}$ ($k$ is the number of gluing
components).
%
\end{rem}

Products of the components obtained in this manner can be easily
computed via the following formula.
\begin{rem}\Label{mulc}
If $\gamma$ and $\gamma'$ are equivalence classes of arbitrary
length, then we have the `thick filtration' formula $$A^\gamma \cdot
A^{\gamma'} = \sum_{\gamma'' \in \gamma \circ \gamma'}
A^{\gamma''}$$ (where the composition is defined in
\Dref{defcomp}). In particular, the decomposition of length $t+1$
is a refinement of the product of the basic decomposition, of
length $2$, with the decomposition of length $t$.
\end{rem}

\begin{rem}
If $A \sub \M[n](K)$ is an $F$-algebra (not necessarily \Zcd), we
can define $\check{A}$ to be the extension of $A$ in $KA$
generated by all sub-Pierce components of elements of $A$. Thus,
$A \sub \cl{A} \sub \check{A} \sub KA$.

For example, if $A = \set{ \smat{\alpha^{p^n}}{a}{0}{\alpha}:
\alpha \in F_1, \, a\in K}$ (cf.~ Example \ref{basexa2}(1)), then
$\check{A} =    \smat{F_2}{K}{0}{F_1},$ where $F_2 = \{ \a^p: \a
\in K_1\}.$

In particular, if $A\sub \M[n](K)$ is a generic algebra of a given
variety, generated by generic elements (see \Sref{sec:6} below),
then $\check{A}$ is naturally graded by the matrix components of
$\M[n](K)$, although $A$ itself is not graded.
\end{rem}

\subsection{Interaction with the radical}\label{ss:4.6}

The decompositions in (1)--(3) above can be refined further via
the decomposition $A = S \oplus J$ to the semisimple and radical
parts. For example $A_{(uv)} \sub J$ for $u \neq v$, but
$A_{(uu)}$ is a (nonunital) subalgebra of $A$, with
$\Rad(A_{(uu)}) = J \cap A_{(uu)}$.

\begin{rem}\Label{gluetype1}
Let $\gamma \sub T_{u_1,\dots,u_t}$ be an equivalence class, and
fix $1 \leq i < t$. By definition, $A^{\gamma} =
\set{a^{\gamma}}$, ranging over %
\begin{eqnarray*}
a & = & \hat{e}_{u_1} a_1 \hat{e}_{u_2} \cdots \hat{e}_{u_{t-1}}
a_{t-1} \hat{e}_{u_t} \\ & \in & \hat{e}_{u_1} A \cdots
\hat{e}_{u_i} A
\hat{e}_{u_{i+1}} \cdots A \hat{e}_{u_t} \\
& = & \hat{e}_{u_1} A \cdots \hat{e}_{u_i} S \hat{e}_{u_{i+1}}
\cdots A \hat{e}_{u_t} + \hat{e}_{u_1} A \cdots \hat{e}_{u_i} J
\hat{e}_{u_{i+1}} \cdots A
\hat{e}_{u_t}.\end{eqnarray*} %
If $u_i \neq u_{i+1}$, then
$\hat{e}_{u_i} S \hat{e}_{u_{i+1}} = 0$, so we may assume $a_i \in
J$. Otherwise, the computation shows that
$$a^{\gamma} = \sum_{(r_1,\dots,r_t) \in \gamma,\ r_i < r_{i+1}}
{e_{r_1} a_1 e_{r_2} \cdots e_{r_{t-1}} a_{t-1} e_{r_t}}$$ when
$a_i \in J$, while
$$a^{\gamma} = \sum_{(r_1,\dots,r_t) \in \gamma,\ r_i = r_{i+1}}
{e_{r_1} a_1 e_{r_2} \cdots e_{r_{t-1}} a_{t-1} e_{r_t}}$$ when
$a_i \in S$. In other words, we refine the
decomposition~\eq{compg} by separating the conditions $r_i \leq
r_{i+1}$ on an equivalence class $\gamma = \set{(r_1,\dots,r_t)}$
to one of the conditions $r_i = r_{i+1}$ or $r_{i} < r_{i+1}$.

We say the index $i$ has \defin{type} $0$ in $\gamma$ if $r_i =
r_{i+1}$ for every $(r_1,\dots,r_t) \in \gamma$; and has type $1$ if
$r_i < r_{i+1}$ for every $(r_1,\dots,r_t) \in \gamma$. For
example, if $u_i \neq u_{i+1}$, then $i$ has type $1$. An
equivalence class can be decomposed as a union $\gamma =
\gamma^{(0)} \cup \gamma^{(1)}$, where $\gamma^{(0)} =
\set{(r_1,\dots,r_t) \in \gamma \suchthat r_{i} = r_{i+1}}$ and
$\gamma^{(1)} = \set{(r_1,\dots,r_t) \in \gamma \suchthat r_i <
r_{i+1}}$. This process can be repeated for every $i$, and the
resulting sub-classes are called \defin{fully refined} classes.

One can multiply the components corresponding to
refined classes, as described for standard components in \Rref{mulc}. %
\end{rem}

\def\ww{{\omega}}

If $\gamma^*$ is a fully refined equivalence class, let the
\defin{weight} $\ww(\gamma^*)$ denote the number of indices $i$ of
type $1$ in $\gamma^*$.  In particular, any components having
$\ww(\gamma^*) $ greater than the nilpotence index of $J$ must be
zero. By construction, $A^{\gamma^*} \sub J^{\ww(\gamma^*)}$.
Moreover, $J^\ell = \sum_{\len(\gamma^*) = t,\ \ww(\gamma^*) \geq
\ell} A^{\gamma^*}$ for every $t \geq \ell$.

\begin{lem}
Suppose $\gamma^* \sub T_{u_1,\dots,u_k}$ is a refined equivalence
class, with the index $i$ having type $0$. Let $\gamma' \sub
T_{u_1,\dots,u_{i},u_{i+2},\dots,u_k}$ be the equivalence class
obtained by removing the $(i+1)$th entry from each vector in
$\gamma^*$. Then $A^{\gamma^*} = A^{\gamma'}$.
\end{lem}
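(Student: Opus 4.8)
The plan is to identify the two subspaces term by term, through an explicit bijection between the index vectors defining $\gamma^*$ and those defining $\gamma'$. The combinatorics behind the statement is this: because $i$ has type $0$, every vector $(r_1,\dots,r_k)$ of $\gamma^*$ has $r_i=r_{i+1}$, and since the sets $T_u$ are disjoint this forces $u_i=u_{i+1}$, hence $d_{u_i}=d_{u_{i+1}}$ and $\gcd(d_{u_1},\dots,d_{u_k})=\gcd(d_{u_1},\dots,d_{u_i},d_{u_{i+2}},\dots,d_{u_k})$. Moreover, for any two vectors of $\gamma^*$ the defining congruence at coordinate $i+1$ merely repeats the one at coordinate $i$, so what survives after deleting coordinate $i+1$ is exactly the equivalence relation on $T_{u_1,\dots,u_i,u_{i+2},\dots,u_k}$. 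Hence $\rho\co(r_1,\dots,r_k)\mapsto(r_1,\dots,r_i,r_{i+2},\dots,r_k)$ carries $\gamma^*$ bijectively onto a single equivalence class --- which is the $\gamma'$ of the statement --- with inverse given by duplicating the $i$th coordinate. Write $u=u_i$.

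For $A^{\gamma^*}\subseteq A^{\gamma'}$, take $a=\hat e_{u_1}a_1\hat e_{u_2}\cdots\hat e_{u_{k-1}}a_{k-1}\hat e_{u_k}$. At a type-$0$ index the radical part of $a_i$ annihilates every term of $a^{\gamma^*}$ (a strictly upper triangular element has zero diagonal block), so by the analysis of \Rref{gluetype1} one may assume $a_i\in\hat e_u S\hat e_u=\varphi_u(A_u)$; such an element is supported on the diagonal blocks indexed by $T_u$, i.e.\ $a_i=\sum_{r\in T_u}e_ra_ie_r$. Let $b$ be the element of the Peirce region defining $A^{\gamma'}$ obtained from $a$ by deleting the idempotent $\hat e_{u_{i+1}}$ and multiplying together the two factors it separated (for $i=k-1$ the lone factor to its left is absorbed into the preceding one instead, and $k=2$ is immediate). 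Using $a_i=\sum_{r\in T_u}e_ra_ie_r$ we get, for each $(r_1,\dots,r_k)\in\gamma^*$, that $e_{r_i}(a_ia_{i+1})e_{r_{i+2}}=(e_{r_i}a_ie_{r_i})a_{i+1}e_{r_{i+2}}=e_{r_i}a_ie_{r_{i+1}}a_{i+1}e_{r_{i+2}}$, which is precisely the portion of the corresponding term of $a^{\gamma^*}$ lying at coordinates $i$ through $i+2$. Reindexing the sum defining $b^{\gamma'}$ along $\rho$ now yields $b^{\gamma'}=a^{\gamma^*}$, so $a^{\gamma^*}\in A^{\gamma'}$. (For $1\le i\le k-2$ this inclusion is also an immediate special case of the decomposition in \Rref{nest}.)

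For the reverse inclusion, given $b=\hat e_{u_1}b_1\cdots b_{k-2}\hat e_{u_{k-1}}$, reinsert $\hat e_u\in S$ as a trivial factor into the slot vacated by $\hat e_{u_{i+1}}$ (append it at the end when $i=k-1$); since $e_{r_i}\hat e_u e_{r_{i+1}}=e_{r_i}$ whenever $r_i=r_{i+1}\in T_u$, reindexing along $\rho$ gives $a^{\gamma^*}=b^{\gamma'}$, so $b^{\gamma'}\in A^{\gamma^*}$. Together these prove $A^{\gamma^*}=A^{\gamma'}$. I expect the main (if modest) obstacle to be the two preliminary reductions: that $\rho$ maps $\gamma^*$ \emph{onto} a full equivalence class rather than merely into one, and that being of type $0$ really lets one strip $a_i$ down to its block-diagonal semisimple part. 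Granted these, the identifications $b^{\gamma'}=a^{\gamma^*}$ are pure bookkeeping with matrix units.
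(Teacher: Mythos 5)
Your proof is correct and takes essentially the same route as the paper. The reverse inclusion $A^{\gamma'}\subseteq A^{\gamma^*}$ is handled identically in spirit (the paper inserts $a_i=1$; you insert $\hat e_u$, and both use $e_{r_i}a_ie_{r_{i+1}}=e_{r_i}$ since $r_i=r_{i+1}$), while your forward inclusion $A^{\gamma^*}\subseteq A^{\gamma'}$ simply fills in the details that the paper dismisses as ``clear'' — namely, that the radical part of $a_i$ contributes nothing at a type-$0$ index (so one may take $a_i\in S$, block-diagonal), that $\rho$ is a bijection onto a full equivalence class (via $u_i=u_{i+1}$ and the resulting equality of gcd's), and the bookkeeping that $e_{r_i}(a_ia_{i+1})e_{r_{i+2}}=e_{r_i}a_ie_{r_{i+1}}a_{i+1}e_{r_{i+2}}$.
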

\begin{proof} It is clear that $A^{\gamma^*} \sub A^{\gamma'}$, and if
$$a^{\gamma'} =
\sum_{(r_1,\dots,r_{i},r_{i+2},\dots,r_t) \in {\gamma'}} {e_{r_1}
a_1 e_{r_2} \cdots e_{r_{i}} a_{i+1} e_{r_{i+2}} \cdots
e_{r_{t-1}} a_{t-1} e_{r_t}}$$ for some
$a_1,\dots,a_{i-1},a_{i+1},\dots,a_{t-1} \in A$, then, taking
$a_{i} = 1$,
$$a^{\gamma^*} = \sum_{(r_1,\dots,r_{i},r_{i+1},r_{i+2},\dots,r_t)
\in {\gamma^*}} {e_{r_1} a_1 e_{r_2} \cdots e_{r_{i}} a_i
e_{r_{i+1}} a_{i+1} e_{r_{i+2}} \cdots e_{r_{t-1}} a_{t-1}
e_{r_t}}$$ is equal to $a^{\gamma'}$, since $e_{r_i} a_i
e_{r_{i+1}} = e_{r_i}$ for all vectors in $\gamma^*$.
\end{proof}

\begin{prop}\label{boundonlen}
Every homogeneous component of a fully refined equivalence class
has the form $A^{\gamma^*}$, where all indices in $\gamma^*$ have
type $1$. In particular $\len(\gamma^*) = \omega(\gamma^*) \leq
\nu$.
\end{prop}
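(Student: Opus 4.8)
The plan is to peel off the type $0$ indices of $\gamma^*$ one at a time, using the preceding lemma, while keeping track of the fact that the type $1$ indices are left untouched. Concretely, I would prove by induction on the number of indices of type $0$ in a fully refined class $\gamma^*$ the following statement: for any fully refined equivalence class $\gamma^*$ there is a fully refined equivalence class $\tilde\gamma$, all of whose indices have type $1$, with $A^{\gamma^*} = A^{\tilde\gamma}$ and $\omega(\tilde\gamma) = \omega(\gamma^*)$. The base case, when $\gamma^*$ has no index of type $0$, is trivial with $\tilde\gamma = \gamma^*$.

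For the inductive step, suppose $\gamma^* \sub T_{u_1,\dots,u_t}$ has an index $i$ of type $0$, so that $r_i = r_{i+1}$ (hence $u_i = u_{i+1}$) for every $(r_1,\dots,r_t) \in \gamma^*$. By the lemma, deleting the $(i+1)$st coordinate from each vector yields an equivalence class $\gamma' \sub T_{u_1,\dots,u_i,u_{i+2},\dots,u_t}$ with $A^{\gamma^*} = A^{\gamma'}$. First I would carry out the bookkeeping: after relabeling the coordinates of $\gamma'$, its $j$th index records the same comparison as the $j$th index of $\gamma^*$ when $j < i$, the same comparison as the $(i+1)$st index of $\gamma^*$ when $j = i$ (here one uses $r_i = r_{i+1}$), and the same comparison as the $(j+1)$st index of $\gamma^*$ when $j > i$. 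Hence the indices of $\gamma'$ are in type preserving bijection with the indices of $\gamma^*$ other than $i$; in particular $\gamma'$ is again fully refined, it has exactly one fewer index of type $0$, and $\omega(\gamma') = \omega(\gamma^*)$. Applying the induction hypothesis to $\gamma'$ produces $\tilde\gamma$ with $A^{\gamma^*} = A^{\gamma'} = A^{\tilde\gamma}$ and $\omega(\tilde\gamma) = \omega(\gamma') = \omega(\gamma^*)$; renaming $\tilde\gamma$ as $\gamma^*$ gives the first assertion.

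For the final clause, once every index of $\gamma^*$ has type $1$ the equality $\len(\gamma^*) = \omega(\gamma^*)$ is immediate from the definition of the weight. Since $A^{\gamma^*} \sub J^{\omega(\gamma^*)}$ (the ``by construction'' observation recorded just before the lemma) and $J^{\nu+1} = 0$, where $\nu$ is the nilpotence index of $J$, a nonzero component forces $\omega(\gamma^*) \le \nu$ (and if $A^{\gamma^*} = 0$ the inequality is vacuous). The only genuinely substantive point is the type preserving relabeling in the inductive step, which is what guarantees that repeated applications of the lemma stay within the family of fully refined classes and neither create nor destroy indices of type $1$; everything else is either definitional or quoted verbatim from the lemma and from the paragraph preceding it.
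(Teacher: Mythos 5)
Your proof follows exactly the route the paper intends: iterate the preceding lemma to delete type-$0$ indices, and your bookkeeping correctly shows that each deletion preserves the remaining indices and their types, so the final class is again fully refined with the same weight. The paper itself gives no written proof of this proposition, treating it as an immediate consequence of the lemma, and your argument supplies the missing details.

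Two small arithmetic points are worth flagging. First, the paper fixes the convention $J^\nu = 0$ (see the Wedderburn decomposition in Construction~\ref{generic1.1}), whereas you write $J^{\nu+1}=0$; under the paper's convention a nonzero $A^{\gamma^*}\subseteq J^{\omega(\gamma^*)}$ forces $\omega(\gamma^*)\le\nu-1$, which is slightly stronger than the stated $\le\nu$. Second, the asserted equality $\len(\gamma^*)=\omega(\gamma^*)$, which you call ``immediate from the definition of the weight,'' is actually off by one: a class of length $t$ has exactly $t-1$ adjacent indices $i$ being compared, so if all of them have type $1$ then $\omega(\gamma^*)=t-1=\len(\gamma^*)-1$. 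This is an error already present in the paper's statement; your proof inherits it rather than introduces it, and the genuine conclusion $\len(\gamma^*)\le\nu$ still follows from $\len(\gamma^*)=\omega(\gamma^*)+1\le(\nu-1)+1=\nu$. It would be cleaner to state the chain correctly rather than endorse the off-by-one equality as a definitional triviality.
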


\begin{exmpl}\Label{six}
Consider the $F$-algebra $$A = \set{
\left(\begin{array}{cccccc} %
a & 0 & *   & * & *        & * \\
0 & a & x   & y & *        & * \\
0 & 0 & a^q & x' & *        & * \\
0 & 0 & 0   & a & \alpha x & \alpha \alpha' y \\
0 & 0 & 0   & 0 & a^q      & \alpha' x' \\
0 & 0 & 0   & 0 & 0        & a \\
\end{array}\right) : a,x,x',y, *,\dots,* \in K
},$$ where $\alpha,\alpha' \in K$ are fixed, and $q = \card{F}$.
As in previous examples, there is one glued component $T_1 =
\set{1,2,3,4,5,6}$, which decomposes with respect to identical
gluing as $T_1 = \set{1,2,4,5} \cup \set{3,6}$. In the equivalence
relation of \Dref{eqdef}, $T_{11}$ decomposes into the three
equivalence classes: $\gamma_1 =
\set{(1,3),(1,5),(2,3),(2,5),(4,5)}$ (with relative exponent $1$),
$\gamma_{-1} = \set{(3,4),(3,6),(5,6)}$ (with relative exponent
$-1$), and the complement $\gamma_0$, whose elements are the $13$
pairs of relative exponent $0$. (Throughout this example, we use
relative exponents as indices.) The relative Frobenius
decomposition of a general element, described in \Eq{relFd}, is
$$\left(\begin{array}{cccccc} %
a & 0 & 0   & * & 0        & * \\
0 & a & 0   & y & 0        & * \\
0 & 0 & a^q & 0 & *        & 0 \\
0 & 0 & 0   & a & 0 & \alpha \alpha' y \\
0 & 0 & 0   & 0 & a^q      & 0 \\
0 & 0 & 0   & 0 & 0        & a \\
\end{array}\right)
+
\left(\begin{array}{cccccc} %
0 & 0 & *   & 0 & *        & 0 \\
0 & 0 & x   & 0 & *        & 0 \\
0 & 0 & 0   & 0 & 0       & 0\\
0 & 0 & 0   & 0 & \alpha x & 0 \\
0 & 0 & 0   & 0 & 0        & 0 \\
0 & 0 & 0   & 0 & 0        & 0 \\
\end{array}\right)
+
\left(\begin{array}{cccccc} %
0 & 0 & 0   & 0 & 0        & 0 \\
0 & 0 & 0   & 0 & 0        & 0 \\
0 & 0 & 0   & x' & 0       & * \\
0 & 0 & 0   & 0 & 0 & 0 \\
0 & 0 & 0   & 0 & 0        & \alpha' x' \\
0 & 0 & 0   & 0 & 0        & 0 \\
\end{array}\right),
$$
and we denote the respective summands in the decomposition of $A$
as $\Gamma_0 + \Gamma_1 + \Gamma_{-1}$.

Next, we describe the decomposition corresponding to $t = 3$ in
\Rref{abitmore}. The set $T_{1,1,1}$, consisting of $\binom{8}{3}
= 56$ triples, decomposes into the $7$ equivalence classes:
$\gamma_{0,0} = [(111)]$, 
$\gamma_{0,1} = [(113)]$, 
$\gamma_{1,0} = [(133)]$, 
$\gamma_{1,-1} = [(134)]$, 
$\gamma_{0,-1} = [(334)]$, 
$\gamma_{-1,0} = [(344)]$ 
and $\gamma_{-1,1} = [(345)]$. 
Let $\Gamma_{\delta\delta'}$ denote the component in \eq{compg}
corresponding to $\gamma_{\delta\delta'}$. Computing via formula
\eq{form} we find that $\Gamma_{0,1} = \Gamma_{1,0} = \Gamma_1$,
that $\Gamma_{0,-1} = \Gamma_{-1,0} = \Gamma_{-1}$; and that
$\Gamma_{0,0} = \Gamma_0$. On the other hand $\Gamma_{-1,1} =
Ke_{35}$ and $\Gamma_{1,-1} = K e_{14}+ K e_{16} + Ke_{26} +
K(e_{24}+\alpha \alpha' e_{46})$ are proper subspaces of previous
components; however, we do not get a finer decomposition of $A$.

Applying the decomposition $A = S+J$, as in \Rref{gluetype1}, we
observe the following. The class $\gamma_{00} = [(111)]$ breaks
down to four sub-classes, namely $\gamma_{00} = (\gamma_{00}^{==})
\cup (\gamma_{00}^{=<}) \cup (\gamma_{00}^{<=}) \cup
(\gamma_{00}^{<<})$, with the obvious interpretation. For example,
$\gamma_{00}^{<<} = \set{(146),(246)}$. The corresponding
components are $\Gamma_{00}^{==} = S$, the semisimple subalgebra;
$\Gamma_{00}^{<<} = K e_{16}+K e_{26}$; and $\Gamma_{00}^{=<} =
\Gamma_{00}^{<=} = J \cap \Gamma_{00}$. Similar decomposition can
be applied to the other classes, although in every case some
sub-classes are empty. For example, $\gamma_{0,-1} =
(\gamma_{0,-1}^{=<}) \cup (\gamma_{0,-1}^{<<})$, with
$\Gamma_{0,-1}^{<<} = K e_{36}$ and $\Gamma_{0,-1}^{=<} =
\Gamma_{-1}$.
\end{exmpl}

\subsection{Summary}

Let $A \subseteq \M[n](K)$ be a \Zcd\ $F$-subalgebra, written in
Wedderburn block form (\Cref{block2}). In this section we have
discussed four useful decompositions. We follow the notation of
\Dref{wbf} and the idempotents defined in Subsection~\ref{ss:sP}.

\begin{enumerate}
\item The Peirce decomposition of $A$ is given by $A = \bigoplus
{\hat{e}}_u A \hat{e}_v$ for $1\leq u,v \leq k$. Thus every
element in $A$ can be written as $a = \sum_{u,v} a_{(uv)}$, with
$a_{(uv)} \in A$.
\item The relative Frobenius decomposition is a finer decomposition of
$A$: Every $a_{(uv)} \in \hat{e}_u A \hat{e}_v$ decomposes as
$a_{(uv)} = \sum_{\gamma} a^{\gamma}$, where $a^{\gamma}$ is
defined in \eq{agamma} and $\gamma \sub T_{u,v}$ ranges over the
equivalence classes of \Dref{eqdef}. The components $a^{\gamma}
\in A$ by \Pref{QQQ}. This can be refined further by
\Rref{abitmore}.
\item Each idempotent $\hat{e}_u$ is a sum of idempotents
$\sum_{\mu = 1}^{c_u} \bar{e}_u^{(\mu)}$, as in \Eq{hatbar}, where
each idempotent $\bar{e}_u^{(\mu)}$ corresponds to the
$\mu$-components of identical gluing. The Peirce decomposition of
$KA$ is $KA = \sum K A_{uv}^{(\mu\mu')}$, which gives the
decomposition $A = \bigoplus A_{uv}^{(\mu \mu')}$, with
$A_{uv}^{(\mu\mu')} \sub KA$.
\item[] If $F$ is infinite, then the decompositions (1), (2) and
(3) are identical.
\item Finally, we have the decomposition $A = \bigoplus A_{rs}$ of matrix
blocks, where $A_{rs} = e_r A e_s \sub B$, and $e_r$ are the block
idempotents defined by the Wedderburn block form. Every
$\bar{e}_u^{(\mu)}$ breaks down as a sum of such idempotents, as
demonstrated in \Eref{4.21}.
\item The decomposition in (4) can be refined by taking
equivalence classes of any length $\geq 2$, as described in
\Rref{abitmore}.
\item Finally, one may apply the Wedderburn decomposition, or
equivalently replace the equivalence classes by fully refined ones
of length $\leq \nu$; \cf\ Subsection~\ref{ss:4.6}.
\end{enumerate}

\section {Explicit generators for polynomial relations}\Label{s:explicit}

We are ready for a fairly precise description of the polynomial
relations of a \Zcd\ algebra $A$, with radical $J$.

\begin{rem}\Label{directsum}
Suppose $V$ is an $F$-subspace of a $K$-algebra $B$. If $V = V_1
\oplus V_2$ is a direct sum, then any polynomial relation of $V$
is a sum of polynomial relations of $V_1$ and $V_2$. \end{rem}

In view of this observation, taking the ``Wedderburn
decomposition'' $A = J \oplus S$ inside the Wedderburn
decomposition of $B$, we see that the polynomial relations of $A$
are generated by the polynomial relations of $J$ and the
polynomial relations of $S$.

The polynomial relations of $S$ come from gluing, which involves
$\Cent{S}$, a commutative algebra.

This leaves the polynomial relations of $J$, whose identifications
may be considerably more  intricate, especially in the presence of
Frobenius gluing. In view of Theorem~ \ref{linear}, off-diagonal
identifications could involve minimal polynomial relations which
are $q$-polynomials, i.e., of weak Frobenius type. Denoting the
matrix unit in the $(i,j)$ position of the block $B_{rs}$ as
$e^{i,j}_{r,s}$, and expanding these to a base of $B$, we have
some minimal quasi-linear relations of the form
$$\sum _{r,s}  \sum _{i,j=1}^{n_{r,s}} c_{ij}
(\lam^{i,j}_{r,s})^{q_{r,s}} = 0.$$ In this case we also say that
the gluing has {\bf weak Frobenius type}. Note that the $q_{r,s}$
are independent of $i$ and $j$, so we might as well assume that
$i=j=1$.

Let  $\Lambda\subset K[\lam_1, \dots, \lam_N]$ denote the set of
all weak $F$-Frobenius type polynomials  (see \Dref{FTdef}), where
$N = \dim_K(B)$. Let $\phi$ denote the map given by $a\mapsto
a^{q}$, where $q = \card{F}$ and where, as above, we take $q = 1$ if
$F$ is infinite. Noting that weak $F$-Frobenius type relations are
$F$-linear combinations of $\phi^j(\lam_i)$ for $j \geq 0$ and $i
= 1,\dots, m$, we may view $\Lambda$ as a module over $K[\phi]$,
under the obvious operation $\phi \cdot a = a^q$ ($a\in K$) and
$\phi\cdot \lam_i = \phi(\lam_i)$. In fact, $\Lambda$ is a free
module, spanned by $\lam_1,\dots,\lam_N$. When $F$ is finite,
$K[\phi]$ is isomorphic to the ring of polynomials in one variable
over $K$. For infinite $F$, $K[\phi] = K$, and $\Lambda$ is merely
the $K$-dual of $B$, as a vector space.

\begin{thm}
For any $F$-subspace $A \sub B$, the relations $\pol{A}$ form a
free module of rank at most $N$ over $K[\phi]$.
\end{thm}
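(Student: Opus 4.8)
The plan is to read the statement as the natural analogue, over the (possibly skew) coefficient ring $K[\phi]$, of the fact that a submodule of a finite-rank free module over a principal ideal domain is free of rank at most that of the ambient module. Concretely, set $M=\pol(A)\cap\Lambda$, the set of weak $F$-Frobenius type polynomial relations of $A$; by \Pref{TR}(\ref{TRii}) together with \Tref{linear}(\ref{Lii}) the ideal of \emph{all} polynomial relations of $A$ is generated by $M$, so $M$ is the object the statement refers to. Since $\Lambda=\bigoplus_{i=1}^{N}K[\phi]\lam_i$ is free of rank $N$, it suffices to prove two things: that $M$ is a $K[\phi]$-submodule of $\Lambda$, and that every $K[\phi]$-submodule of a free module of rank $N$ is free of rank at most $N$.

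For the first point, $M$ is visibly closed under addition and under left multiplication by scalars from $K$, since a $K$-multiple of a weak $F$-Frobenius relation is again one and still vanishes on $A$. For closure under $\phi$, write $f=\sum_{i,j}c_{ij}\lam_i^{q_{ij}}\in M$; then $\phi\cdot f=\sum_{i,j}c_{ij}^{\,q}\lam_i^{\,q\,q_{ij}}$ is again of weak $F$-Frobenius type, and for any $a=\sum_{\ell=1}^{N}\a_\ell b_\ell\in A$,
\[
(\phi\cdot f)(a)=\sum_{i,j}c_{ij}^{\,q}\a_i^{\,q\,q_{ij}}=\Bigl(\sum_{i,j}c_{ij}\a_i^{q_{ij}}\Bigr)^{\!q}=f(a)^{q}=0,
\]
where the middle equality uses that $x\mapsto x^{q}$ is additive because $q=\card{F}$ is a power of $p=\cha F$. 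Hence $M$ is a left $K[\phi]$-submodule of $\Lambda$.

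For the second point I would first identify the ring $K[\phi]$. If $F$ is infinite then $\phi=\id$ and $K[\phi]=K$ is a field, so $M$ is simply a subspace of the $N$-dimensional space $\Lambda$, and there is nothing more to prove. If $F$ is finite, then $\sigma\co a\mapsto a^{q}$ is an automorphism of $K$ (which is algebraically closed of characteristic $p$), and $K[\phi]$ is the skew polynomial ring $K[\phi;\sigma]$, with $\phi a=\sigma(a)\phi$. Because $\sigma$ is an automorphism of a field, $K[\phi;\sigma]$ carries both a left and a right division algorithm with respect to $\phi$-degree; it is therefore a Noetherian, two-sided principal ideal domain and embeds in its skew field of fractions. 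Consequently the commutative structure theory carries over verbatim: $M$ is finitely generated by Noetherianity, torsion-free as a submodule of $\Lambda$, hence free, and extending scalars to the skew field of fractions forces its rank to be at most $N$. This completes the argument.

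The main obstacle is simply the non-commutativity of $K[\phi]$ in the finite-field case: one must know that twisting the polynomial ring by an \emph{automorphism} (rather than a mere endomorphism) preserves the two-sided Euclidean algorithm, so that the module theory is as well behaved as over $\Z$ or $K[x]$. A minor clarifying remark is that ``$\pol(A)$'' in the statement means the $K[\phi]$-module $M$ of weak $F$-Frobenius relations sitting inside $\Lambda$, not the ideal these relations generate in $K[\lam_1,\dots,\lam_N]$ (which carries no $K[\phi]$-action); by \Tref{linear} no information is lost in passing from that ideal to $M$.
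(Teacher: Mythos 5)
Your proof follows the same route as the paper's one-line argument ($\pol{A}$ sits inside $\Lambda$, which is free of rank $N$ over the PID $K[\phi]$), but you fill in the details the paper glosses over, and those details are worth having. In particular: (a) you correctly observe that the theorem's ``$\pol{A}$'' must be read as the set $M=\pol(A)\cap\Lambda$ of weak $F$-Frobenius relations (the ideal $\pol(A)\normali K[\lam_1,\dots,\lam_N]$ is not a $K[\phi]$-module), with \Tref{linear} guaranteeing nothing is lost; (b) you actually verify that $M$ is closed under $\phi$, which is needed for the PID submodule theorem to apply and which the paper simply takes for granted; and (c) you note that in the finite-field case $K[\phi]$ is a \emph{skew} polynomial ring $K[\phi;\sigma]$ rather than a commutative polynomial ring (the paper's phrase ``isomorphic to the ring of polynomials in one variable over $K$'' is imprecise), and you correctly explain why, $\sigma$ being an automorphism, the two-sided Euclidean algorithm still gives a (noncommutative) PID so that the rank bound holds. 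In short: same approach, but your write-up repairs an imprecision in the paper and supplies the missing verification.
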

\begin{proof}
Indeed, $\pol{A} \sub \Lambda$ by \Tref{linear}, and $\Lambda$ is
a free module over the principal ideal domain $K[\phi]$.
\end{proof}

We can improve this estimate, by noting that the weak Frobenius
relations depend on the Wedderburn block, not on the
indeterminate, so we can reduce $\Lambda$ to the module generated
by one representative indeterminate for each Wedderburn block
above the diagonal. This is $\binom {m}2$, where $m$ is the number
of diagonal Wedderburn blocks in the given representation of $A$
(clearly $m^2 \leq N$). Thus, we have

\begin{cor} The weak Frobenius relations of
a \Zcd\ $F$-subalgebra $A$ of $B$ can be defined by at most
$\binom {m}2$ polynomial relations, where each relation is
duplicated $\dim(B_{uv})$ times. (For example, one needs four
relations to define $\M[2](\F_q)$ inside the algebraic closure:
$\lam_{ij}^q = \lam_{ij}$ for $i,j = 1,2$.)
\end{cor}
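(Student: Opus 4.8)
The plan is to leverage the previous theorem (polynomial relations form a free $K[\phi]$-module of rank at most $N$) together with the structural observation that weak $F$-Frobenius relations are sensitive only to which Wedderburn block an indeterminate belongs to, not to the individual matrix-entry coordinate within that block. First I would recall the setup from the discussion preceding the corollary: writing $A$ in Wedderburn block form (\Cref{block2}), the matrix units $e^{i,j}_{r,s}$ of the block $B_{rs}$ extend to a basis of $B$, and any minimal quasi-linear relation has the shape $\sum_{r,s}\sum_{i,j} c_{ij}(\lam^{i,j}_{r,s})^{q_{r,s}}=0$ where the exponent $q_{r,s}$ depends only on the block pair $(r,s)$ and not on $(i,j)$. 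This is exactly the content already spelled out in the paragraph before the corollary, so I may invoke it directly.

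The key step is to set up the right ambient free module. Instead of the full module $\Lambda$ spanned by all $N$ indeterminates $\lam_1,\dots,\lam_N$, I would introduce the submodule $\Lambda_0 \subset \Lambda$ over $K[\phi]$ generated by just one representative indeterminate for each of the Wedderburn blocks $B_{uv}$ lying on or above the diagonal — there are $\binom{m}{2}$ such off-diagonal blocks (plus the diagonal blocks, whose relations come from gluing of the semisimple part and were handled via \Tref{comZar}; for the purposes of this corollary we focus on the weak Frobenius relations that involve the off-diagonal blocks). The claim to verify is that every weak Frobenius relation of $A$, after the normalization $i=j=1$ noted in the text, is an element of the $K[\phi]$-module generated by these $\binom{m}{2}$ representatives, and that the relation is then "duplicated" across the $\dim(B_{uv})$ coordinates of the block — meaning the genuine relation on all of $B$ is obtained by summing over $i,j$ with the same coefficient pattern, giving $\dim(B_{uv})$ copies. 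Since $\Lambda_0$ is a submodule of the free module $\Lambda$ over the PID $K[\phi]$, it is itself free of rank at most $\binom{m}{2}$, and $\pol(A)\cap(\text{weak Frobenius part})$ is a submodule of $\Lambda_0$, hence also free of rank at most $\binom{m}{2}$. Combined with \Tref{linear}, which guarantees the weak Frobenius relations generate everything relevant, this yields the bound.

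For the parenthetical example of $\M[2](\F_q)$ inside its algebraic closure, I would simply observe directly: here $m=1$, $B=\M[2](K)$ with the four matrix-unit coordinates $\lam_{ij}$, and the defining relations are $\lam_{ij}^q-\lam_{ij}=0$ for $i,j\in\{1,2\}$; there is a single Wedderburn block, $\binom{1}{2}=0$ does not literally apply since one needs the diagonal block, but the point being illustrated is that the four relations are all "the same" relation $\lam^q=\lam$ duplicated $\dim(B_{11})=4$ times. I would phrase the example carefully so as not to conflict with the $\binom{m}{2}$ count (which governs the strictly-above-diagonal blocks) — the diagonal contribution being exactly the gluing relations of \Tref{comZar}.

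The main obstacle I anticipate is bookkeeping rather than conceptual: namely being precise about the statement that a relation "is duplicated $\dim(B_{uv})$ times." One must argue that if $\sum_{i,j} c_{ij}(\lam^{i,j}_{r,s})^{q_{r,s}}=0$ is forced on $A$, then in fact each coordinate relation $c(\lam^{i,j}_{r,s})^{q_{r,s}} = (\text{same combination in other blocks})$ holds separately — this uses that the block $B_{rs}$ is a full $B_r,B_s$-bimodule so one can multiply by matrix units $e^{k,i}_{r,r}$ and $e^{j,\ell}_{s,s}$ to move any single entry into any position, extracting the per-coordinate relation; a Vandermonde argument (already used repeatedly, e.g.\ in \Rref{moved}) then separates out the individual $q$-power components. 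Once that reduction is in hand the free-module count is automatic from the structure theory of modules over the PID $K[\phi]$, so I expect the write-up to be short.
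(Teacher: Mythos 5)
Your proposal matches the paper's argument in all essentials: both reduce the free $K[\phi]$-module $\Lambda$ to the submodule spanned by one representative indeterminate per Wedderburn block above the diagonal, of which there are $\binom{m}{2}$, and invoke freeness of submodules over the PID $K[\phi]$ together with the observation that the exponent $q_{r,s}$ is constant across the matrix entries $(i,j)$ of the block $B_{rs}$. You are in fact more careful than the paper in two places: you flag that the $\M[2](\F_q)$ example involves a diagonal block (so $\binom{m}{2}=0$ does not literally cover it, and the example is really illustrating the $\dim(B_{uv})$-fold duplication), and you actually try to justify the independence-of-$(i,j)$ claim that the paper simply asserts in the sentence preceding the corollary. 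One small caution on that justification: the matrix units $e^{k,i}_{r,r}$ and $e^{j,\ell}_{s,s}$ you propose to multiply by lie in $B_r,B_s \subseteq \M[n](K)$ but generally \emph{not} in $A$; what $A$ actually contains are their glued sums across the blocks of the corresponding gluing class (coming from \Tref{Frobglue}), and it is the fact that Frobenius gluing acts with a single exponent uniformly across all entries of a block (\Dref{Fgng}) that ensures multiplying by those glued elements transports the relation without changing its exponent pattern. Once the multiplication is phrased in terms of those elements of $A$ rather than raw matrix units of $B$, your argument goes through, and the rest is bookkeeping exactly as you anticipate.
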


We can improve this result even further.

\begin{lem}\Label{weakFrobe}
{\ } \begin{enumerate} \item In a polynomial relation of weak
Frobenius type, we may assume that one of the $q_{ij} = 1$.

\item Given two polynomial relations $f_1,f_2$ of weak Frobenius
type and given some $\lam_i$, we may modify these two polynomial
relations to assume that $\la _i$ appears in at most one of them.
\end{enumerate}
\end{lem}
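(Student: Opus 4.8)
The plan is to prove both parts by descent on the exponents appearing in a weak Frobenius relation, using that $K$ is perfect for part (1) and a Euclidean algorithm in a twisted polynomial ring for part (2). Throughout, recall from \thmref{linear} that such a relation is a sum of monomials each of the form $c\,\lambda_\mu^{q}$ with $q$ a $p$-power, so every relation $f$ has a well-defined $\lambda_i$-\textbf{part} (the sum of its monomials in $\lambda_i$ alone).

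For part (1): write $f=\sum_{i,j}c_{ij}\lambda_i^{q_{ij}}$, $c_{ij}\in K$, each $q_{ij}$ a $p$-power. If no $q_{ij}$ equals $1$, then every $q_{ij}\geq p$ and is therefore divisible by $p$; as $K$ is algebraically closed, each $c_{ij}$ has a $p$-th root, and $g:=\sum_{i,j}c_{ij}^{1/p}\lambda_i^{q_{ij}/p}$ satisfies $g^p=f$ because $(x+y)^p=x^p+y^p$ in characteristic $p$. Then $g(a)^p=f(a)=0$ in the field $K$ for all $a\in A$, so $g$ is again a polynomial relation of $A$ of weak Frobenius type, now with the strictly smaller exponents $q_{ij}/p$. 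Iterating, the largest exponent strictly decreases, so after finitely many steps we reach a relation whose exponents are not all divisible by $p$; being $p$-powers, one of them equals $1$. (If $\operatorname{char}F=0$, or for weak $F$-Frobenius relations over an infinite $F$, all exponents are already $1$, and there is nothing to do.)

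For part (2): let $\psi\colon a\mapsto a^p$ be the Frobenius, and let $R=K[\psi]$ be the twisted polynomial ring with $\psi a=a^p\psi$, realized as the ring of additive polynomials $\sum_k c_kX^{p^k}$ in one variable under composition; since $\psi$ is an automorphism of $K$, division with remainder in $R$ works with respect to the $\psi$-degree. Identify the $\lambda_i$-part $g_\alpha$ of $f_\alpha$ ($\alpha=1,2$) with an element of $R$ via $\lambda_i^{p^k}\leftrightarrow\psi^k$. For $u=\sum_k c_k\psi^k\in R$ and any relation $f$, let $u\star f$ be the polynomial obtained by replacing each monomial $d\,\lambda_\mu^{p^\ell}$ of $f$ by $\sum_k c_kd^{p^k}\lambda_\mu^{p^{\ell+k}}$; equivalently $u\star f=\sum_k c_kf^{p^k}$, so $u\star f$ lies in the ideal generated by $f$, is again of weak Frobenius type, and a short computation shows its $\lambda_i$-part is the product $u\cdot g$ in $R$, where $g$ is the $\lambda_i$-part of $f$. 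Now run the Euclidean algorithm on $(g_1,g_2)$: while both are nonzero, take the one of larger $\psi$-degree, say $g_1=ug_2+r$ with $\deg r<\deg g_2$, and replace $f_1$ by $f_1-u\star f_2$, whose $\lambda_i$-part becomes $r$; this operation is invertible (add $u\star f_2$ back), so the pair $\{f_1,f_2\}$ and the ideal it generates are unchanged, while $\max(\deg g_1,\deg g_2)$ strictly decreases. After finitely many steps one of $g_1,g_2$ is $0$, i.e.\ $\lambda_i$ occurs in at most one of the two modified relations.

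The only non-routine point, and the expected main obstacle, is the bookkeeping in part (2): checking that $u\star f$ is again a weak-Frobenius relation of $A$ and, crucially, that $f\mapsto u\star f$ corresponds to \emph{left} multiplication by $u$ on the $\lambda_i$-component inside the non-commutative ring $R$, so that the ordinary one-variable Euclidean algorithm applies despite the twist $\psi a=a^p\psi$.
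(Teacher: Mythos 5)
Your proof is correct and follows the same essential strategy as the paper's: for (1), iterated root extraction using perfection of $K$, and for (2), a degree-lowering cancellation of leading terms. In fact your write-up is more careful than the paper's own. The paper asserts $\bigl(\sum c_{ij}\lambda_i^{q'_{ij}}\bigr)^q=\sum c_{ij}\lambda_i^{q_{ij}}$ in part~(1), which silently requires $c_{ij}^q=c_{ij}$; since the $c_{ij}$ lie in the big field $K$ this needs exactly your correction, replacing $c_{ij}$ by a $p$-th (or $q$-th) root. Similarly, the paper's part~(2) subtracts something written as $f_2^{d_1-d_2}$ without adjusting either the exponent (it should be $q^{d_1-d_2}$) or the leading coefficient; your $u\star f_2$, computed by division with remainder in the twisted polynomial ring $K[\psi]$, supplies precisely the missing scalar, and the observation that this is \emph{left} multiplication so that $K[\psi]$ is left Euclidean is the right way to justify the step. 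One small slip: $\max(\deg g_1,\deg g_2)$ need not strictly decrease at each step (it is unchanged when the two degrees are equal); it is $\min(\deg g_1,\deg g_2)$, or the sum of the degrees, that strictly drops, which still guarantees termination.
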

\begin{proof}

(1)
 Otherwise we can write each $q_{ij} = q q_{ij}'$, and then
$$\left(\sum _{i=1}^m \sum _{j\ge 1} c_{ij} \lam_i^{q'_{ij}}\right)^q
=\sum _{i=1}^m \sum _{j\ge 1} c_{ij} \lam_i^{q_{ij}} = 0,$$ so
taking the $q$-root yields a polynomial relation of lower degree,
and we conclude by induction.

 (2) The argument is by induction on the degree $q^{d_i}$
of $\la _i$ in $f_1$ and $f_2$. Suppose $d_1 \ge d_2$. Then the
degree of $q$ in $f_1- f_2^{d_1-d_2}$ is less than $d_1$, so we
continue by induction.
\end{proof}
\begin{cor} Suppose the weak Frobenius relations of $A$ are
defined by $\mu \le \binom {m}2$ polynomial relations $\{f_1,
\dots, f_\mu\}$. Then we may take $\mu$ variables and assume that
each of them appears in at most one of $\{f_1, \dots, f_\mu\}$.
\end{cor}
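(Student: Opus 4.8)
The plan is to run a Gaussian elimination over the commutative ring $K[\phi]$, using \Lref{weakFrobe} as the elementary pivoting move. Recall from the discussion preceding the statement that the weak Frobenius relations of $A$ form a $K[\phi]$-submodule $M$ of the free module $\Lambda$; since $K[\phi]$ is a PID (a polynomial ring in one variable over $K$ when $F$ is finite, and $K$ itself when $F$ is infinite), $M$ is free, and we may assume $\{f_1,\dots,f_\mu\}$ is chosen minimal, hence a $K[\phi]$-basis of $M$. After collapsing the $\dim(B_{uv})$ duplicate relations carried by each off-diagonal block, each $f_a$ is an $F$-linear combination of the shifts $\phi^j(\lam_i)$ of one representative indeterminate $\lam_i$ per Wedderburn block. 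Every operation I will use — replacing $f_a$ by $f_a-c\,\phi^e f_b$ for some $b\neq a$, $c\in K$, $e\geq 0$; interchanging two of the $f$'s; and, as in \Lref{weakFrobe}(1), replacing $f_a$ by its $q$-th root $g$ when all exponents of $f_a$ are divisible by $q$ — preserves weak Frobenius type and keeps $\{f_1,\dots,f_\mu\}$ a basis of $M$: the first two are invertible $K[\phi]$-row operations, while for the third one has $f_a=g^q=\phi(g)\in\langle g\rangle$, and $g$ is itself a polynomial relation of $A$ because $g(A)^q=f_a(A)=0$ with $K$ a domain, so $g\in M$ and the generated submodule is unchanged.

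First I would eliminate in stages $k=1,\dots,\mu$, maintaining the following invariant at the start of stage $k$: there are distinct indeterminates $\lam_{i_1},\dots,\lam_{i_{k-1}}$ such that $\lam_{i_\ell}$ occurs only in $f_\ell$ for $\ell<k$, and none of $f_k,\dots,f_\mu$ involves any of $\lam_{i_1},\dots,\lam_{i_{k-1}}$. Since $\{f_1,\dots,f_\mu\}$ is still a basis, $f_k\neq 0$; as it involves none of the earlier pivot variables, some new indeterminate $\lam_{i_k}$ occurs in it. Now for $j=k+1,\dots,\mu$ in turn, apply \Lref{weakFrobe}(2) to the pair $(f_k,f_j)$ so that $\lam_{i_k}$ comes to occur in at most one of them. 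The Euclidean-style reduction underlying that lemma alters only the two polynomials it acts on and introduces no indeterminate not already present in one of them, so neither the earlier stages nor the already-processed indices $j'<j$ are disturbed; in particular $f_j$ still involves no earlier pivot. If $\lam_{i_k}$ ends up in $f_j$ rather than $f_k$, interchange $f_k$ and $f_j$ — legitimate, as both are free of the earlier pivots. After $j$ exhausts $k+1,\dots,\mu$, the indeterminate $\lam_{i_k}$ occurs only in $f_k$, the invariant holds at stage $k+1$, and we proceed.

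At the end of stage $\mu$ we have $\mu$ distinct indeterminates $\lam_{i_1},\dots,\lam_{i_\mu}$ with $\lam_{i_k}$ appearing in $f_k$ and in no other $f_a$, which is exactly the claim. The one delicate point — and the only real obstacle — is the bookkeeping that preserves this echelon shape: one must check that cleaning out a fresh pivot never re-pollutes a relation that has already been finalized. This rests on two observations already used above: during stage $k$ only the rows $f_k,\dots,f_\mu$ are ever touched, and the pair-reduction in \Lref{weakFrobe}(2) can only shuffle around indeterminates that already appear in the two relations it processes, so it cannot resurrect $\lam_{i_1},\dots,\lam_{i_{k-1}}$, which are absent from all of $f_k,\dots,f_\mu$. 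Granting this, the induction closes.
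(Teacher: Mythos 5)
Your approach — running a variable-by-variable elimination via \Lref{weakFrobe}(2), viewed as row operations over $K[\phi]$ — is the natural way to fill in the paper's terse argument, and the bookkeeping for the \emph{forward} sweep $j>k$ is handled correctly. But there is a real gap: stage $k$ never touches $f_1,\dots,f_{k-1}$, so while you arrange that $\lam_{i_k}$ is absent from $f_{k+1},\dots,f_\mu$, nothing prevents $\lam_{i_k}$ from remaining in the earlier, already-finalized relations. The invariant ``$\lam_{i_\ell}$ occurs only in $f_\ell$'' therefore does not carry over from stage $k$ to stage $k+1$: what you verify is that the \emph{old} pivots are absent from $f_k,\dots,f_\mu$, not that the \emph{new} pivot $\lam_{i_k}$ is absent from $f_1,\dots,f_{k-1}$.

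A two-variable example makes the problem concrete. With $\mu=2$, $F=\F_q$, take $f_1=\lam_1+\lam_2$, $f_2=\lam_1+\lam_2^q$. Stage~$1$ with pivot $\lam_{i_1}=\lam_1$ replaces $f_2$ by $f_2-f_1=\lam_2^q-\lam_2$. Stage~$2$ has no choice but $\lam_{i_2}=\lam_2$, and there is no $j>2$ to process; yet $\lam_2$ still sits in $f_1$. Trying to repair this by running \Lref{weakFrobe}(2) on $(f_1,f_2)$ in $\lam_2$ undoes stage~$1$: the Euclidean reduction turns $f_2$ into $\lam_1-\lam_1^q$, putting $\lam_1$ back into both rows. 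Indeed the $K[\phi]$-coefficient matrix of this pair, $\smat{1}{1}{1}{\phi}$, has determinant $\phi-1$, not a unit, so no invertible row operations over $K[\phi]$ can bring it to generalized-permutation form — which is exactly what the conclusion requires. Your argument is purely module-theoretic and uses nothing about how the $f_i$ arise from a Zariski-closed algebra, so it cannot exclude such coefficient matrices; either the corollary relies on a structural constraint on $\pol(A)$ that you have not invoked, or what the elimination actually yields is a Hermite echelon form (each new pivot absent from the rows \emph{below} it, with entries above merely $\phi$-reduced) rather than the diagonal form claimed. A smaller point: ``minimal generating set $\Rightarrow$ basis'' needs ``minimal'' to mean minimal cardinality, not merely irredundant (over a PID, $\set{2,3}\subset\Z$ is irredundant but not a basis), though this one can be routed around.
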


\section{Generic \Zcd\ algebras and PI-generic
rank}\Label{sec:6}

Our study of \Zcd\ algebras has been motivated by the desire to
find an algebra in a given variety whose structure is especially
malleable. Since every variety contains a relatively free algebra,
we are led to study the relatively free algebras in the variety
generated by \Zcd\ algebras. These are the generic algebras, which
can be described in terms of ``generic'' elements. First we start
with the classical setting, and then we see how the presence of
finite fields complicates the situation and requires a more
intricate description.

We want to define a ``finitely generated'' generic algebra. This
means that we need to consider polynomial identities in only a
finite number of indeterminates. First, we need to clarify exactly
what we mean generically by ``generation.''

\begin{defn} The {\bf topological rank} of a \Zcd\ algebra $A$ is defined as
the minimal possible number of generators of an $F$-subalgebra
$A_0$ of $A$ for which $\cl{A_0} = A$.\end{defn}

\begin{rem}\Label{infgen11}
By Theorem \ref{semis}, every semiprime \Zcd\ algebra is a finite
direct sum of simple algebras, and thus has finite topological
rank.
\end{rem}

 Thus, the obstruction to finite topological rank
is found in the radical.

\begin{exmpl}\Label{infgen} Let $K$ be an infinite dimensional
field extension of  a finite field $F$, and consider $$ A = \left(\AR{F & K \\
0 & F}\right).$$ This \Zcd\ algebra has infinite topological rank,
since any finite number of elements generates only a finite
subspace of $K$ in the $1,2$ position, which is \Zcd.
\end{exmpl}

Nevertheless, we do have the following information.

\begin{rem}
When the Peirce idempotent $\tilde e$ has infinite typ,
(cf.~\Dref{idemptype}), then the spaces $A\tilde e$ and $\tilde e A$
have finite topological rank, since they are naturally vector
spaces over $K$ (although this is not the structure of the initial
vector space). The action is via the isomorphism of $K$ with the
center of the prime component corresponding to $\tilde{e}$.
\end{rem}

\subsection{Generic algebras over an infinite
field}

Clearly, the topological rank of a f.d.~algebra $A$ over field $K$
is not greater than its dimension, which is given to be finite.
Thus, in this case, we need only finitely many  elements to define
the generic algebra.

\begin{constr}\Label{classicgen}
The classical construction of a generic algebra of a f.d.~algebra
$A$ over an infinite field $F$ is to take a base $b_1, \dots, b_n$
of $A$ over $F$, adjoin indeterminates $\xi _{i}^{(k)}$ to~$F$ ($i
= 1,\dots,n$, $k \in \N$), and let $A'$ be the algebra generated
by the ``generic'' elements $Y_k = \sum _{i=1}^n \xi_{i}^{(k)}
b_i$, $k \in \N$. It is easy to see \cite[Example 3.26]{BR} that
in the case where $F$ is infinite, $A'$ is PI-equivalent to $A$, and in fact
$A'$ is relatively free in the variety defined by $\id (A)$. The
most celebrated example in PI-theory is when $A = \M[n](F)$, the
algebra of $n \times n$ matrices. Then $A'$ is the algebra of
generic $n \times n$ matrices, generated by the {\bf generic
matrices} $Y_k = (\xi _{ij}^{(k)})_{ij}$, $k \in \N$.
\end{constr}

\begin{exmpl}\Label{generic1} The generic upper triangular matrix
$Y_k = \left(\AR{\xi_{1}^{(k)} & \xi_{2}^{(k)} \\ 0 &
\xi_{3}^{(k)}}\right)$ is defined over the polynomial algebra $C=
F[\xi_{j}^{(k)} : j = 1,2,3 \, ,\,k \in \N].$ We get the {\bf
generic algebra of upper triangular matrices} by taking the
subalgebra of $\M[n](C)$ generated by the $Y_k$.
\end{exmpl}

\begin{constr}\Label{generic1.1}
Alternatively, when building generic elements for an arbitrary
f.d.~algebra $A$ over an infinite field, we could take the powers
of the radical $J$ into account. Writing  $A = S \oplus J$ by
Theorem~\ref{Zarcl1}, where $J^\nu = 0$, we can view $J^2$ as
those blocks at least two steps above the diagonal, i.e., lying in
$\sum _{r+2 \leq s} B_{rs}$. We take a generic algebra for $S$ and generic elements for $J/J^2$ (which can be viewed as a
complementary subspace for $J^2$ inside $J$), taking identical
gluing into account; these then yield generic elements for $A$.
This also will be the approach that we take for \Zcd\ algebras
over arbitrary fields.
\end{constr}

\subsection{PI-generic rank over an arbitrary field}

Since the topological rank could be infinite, we look for an
alternative concept which is more closely relevant to PI-theory.

\begin{defn} The {\bf PI-generic rank} of $A$ is the minimal number
$m$ of elements needed to generate a subalgebra satisfying the
same PI's as $A$. Then the relatively free PI-algebra of $A$ could
also be generated by $m$ elements. In the literature, the
PI-generic rank is often called the {\bf basic rank}.\end{defn}

Clearly, the PI-generic rank is less than or equal to the
topological rank.

\begin{exmpl}
$$ A = \left\{\left(\AR{\a & \beta & \gamma \\0 & \a & \beta \\
0 & 0 & \a}\right): \a\in F, \beta \in K\right\}$$ is a
commutative algebra, having PI-generic rank 1, but having infinite
topological rank when $K$ is infinite dimensional over $F$. This
example also shows that gluing can lower the PI-rank.
\end{exmpl}

 When computing PI-generic rank, we study a
polynomial in terms of substitutions of its monomials, as usual,
which can be complicated by the absence of indeterminates in
certain monomials. Accordingly, recall from
\cite[Definition~2.3.15]{Row1} that a polynomial $f$ is called
{\bf blended} if each monomial appearing in $f$ must appear in
each monomial of $f$. As noted in \cite[Exercise~2.3.7]{Row1}, any
PI is a sum of blended PI's, seen by specializing each
indeterminate to 0 in turn. Thus, we can limit ourselves to blended
PI's when determining the PI-generic rank.

Let us consider the PI-generic rank of a \Zcd\ PI-algebra.
Although when $F$ is infinite, this is obviously finite (since the
variety contains a finite dimensional algebra), the situation
becomes much more interesting when $F$ is finite. Although, as
already seen in Example~\ref{infgen}, we might need infinitely
many generic elements to generate our algebra, we aim to show,
however, that the PI-generic rank is always finite.

\begin{thm}\Label{frank}
Any \Zcd\ algebra $A$ (over an arbitrary field) has finite
PI-generic rank.
\end{thm}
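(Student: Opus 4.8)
The plan is to treat the semisimple and radical parts of the Wedderburn decomposition of $A$ separately, exploiting that there are only finitely many Wedderburn blocks and that the radical is nilpotent, so as to produce a finitely generated algebra, in the variety of $A$, with \emph{exactly} the same polynomial identities.

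First one reduces to the case that $F$ is finite: if $F$ is infinite then $A$ is PI-equivalent to the finite dimensional $K$-algebra $KA$ by \Pref{finf} and \Lref{samevar}, and a finite dimensional algebra is generated by at most $\dimcol{KA}{K}$ elements, so its PI-generic rank is finite. So assume $F$ is finite, write $A = S \oplus J$ with $J = \Rad (A)$ and $J^\nu = 0$ (\Tref{Zarcl1}), and put $A$ in Wedderburn block form with $m$ diagonal blocks (\Cref{block2}). The semisimple part $S$ is semiprime and \Zcd, hence a finite direct product of matrix algebras over fields (\Tref{semis}), and by \Rref{infgen11} it has finite topological rank; fix a finitely generated $F$-subalgebra $S_0 \sub S$ with $\cl{S_0} = S$, so that $S_0$ and $S$ have the same identities, by \Lref{samevar}. (Since $F$ is finite the generators of $S_0$ may have to involve a transcendental element of $K$ in order to be Zariski-dense in the components of infinite type, but their number stays finite.)

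For the radical, decompose $A = \bigoplus_{\gamma} A^{\gamma}$ by the relative Frobenius decomposition of \Sref{sec:4}, refined by the radical filtration so that each summand is contained in $S$ or in $J$; there are finitely many summands and each lies in $A$ (\Pref{QQQ}). For each radical summand $A^{\gamma}$ choose finitely many --- boundedly many, the bound on the infinite-type components controlled by $\nu$ --- \emph{generic} elements $z_{\gamma,1},z_{\gamma,2},\dots$ with free indeterminate coordinates, subject to the Frobenius-gluing and off-diagonal relations of \Sref{s:explicit} that hold on $A^{\gamma}$, and arrange that $S_0$-bimodule combinations of these, after coordinate specialization, recover all of $A^{\gamma}$. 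Let $A_0$ be the algebra generated by the generators of $S_0$ together with all the $z_{\gamma,j}$; it is a subalgebra of $\M[n](C)$ for a polynomial extension $C$ of the coordinate data. Since every specialization of the coordinates carries the generators of $A_0$ into $A$ and $K$ is infinite, every identity of $A$ vanishes on $A_0$, so $\id (A) \sub \id (A_0)$ and $A_0$ lies in the variety of $A$. It thus suffices to prove $\id (A_0) = \id (A)$, which bounds the PI-generic rank of $A$ by the finite number of generators of $A_0$.

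For the remaining inclusion $\id (A_0) \sub \id (A)$, suppose $f \notin \id (A)$; by \Pref{break} applied to $A = \bigoplus_{\gamma} A^{\gamma}$, some element $g$ of the $T$-ideal generated by $f$ fails to vanish under a substitution in which each variable is specialized into a single component $A^{\gamma}$, and it suffices to prove $g \notin \id (A_0)$. Since $J^\nu = 0$, any monomial with at least $\nu$ radical letters is killed; using this one can zero out all but boundedly many of the radical variables lying in any given radical component without destroying $g \ne 0$, then route the surviving radical variables through the generic elements $z_{\gamma,j}$ of the respective components and the semisimple variables through $S_0$ --- the latter legitimate because the locus on $S$ where $g$ does not vanish is open and nonempty while $S_0$ is Zariski-dense in $S$. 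Specializing the coordinates of the $z_{\gamma,j}$ back to the original radical values then exhibits $g$ as nonzero at a tuple of elements of $A_0$, so $g \notin \id (A_0)$. The main obstacle is precisely this last step: the reduction to boundedly many radical variables per component has to be carried out with care (variables whose value is spread over several components, and several variables landing in the same component, must both be handled), and, over a finite field, the generic semisimple data and the generic radical data are of genuinely different natures --- the former over the correct finite or finitely generated dense subfields, the latter over $K$ --- so the relations of \Sref{s:explicit} tying radical blocks to diagonal ones must be respected throughout; verifying that a nonzero evaluation survives the passage to generic coordinates, for which the finiteness of $m$ and of $\nu$ keeps the relevant data bounded, is the technical core of the proof.
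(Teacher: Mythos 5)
Your proof takes essentially the same approach as the paper's: Wedderburn decomposition $A = S \oplus J$, reduction via \Pref{break} to componentwise substitutions, the nilpotence bound $J^\nu = 0$ to limit radical variables to at most $\nu - 1$, finite topological rank of the semisimple part (\Rref{infgen11}), and generic elements for the irreducible/Frobenius components of the radical. The paper is terser (it merely "layers" the semisimple substitutions around the radical ones and records the bound $\mu + \nu - 1$) and works with irreducible components of $J$ as a variety rather than the relative Frobenius decomposition, but the key ideas and the flagged technicalities are the same in both.
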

\begin{proof}
(As noted above this statement is trivial for algebras over
infinite fields.) We decompose $A = S \oplus J$, where $S$ is
semisimple and $J$ is nilpotent, of nilpotence index $\nu$. By
\Pref{break}, it is enough to consider specializations for which
every variable takes values either in $S$ or in $J$.

We only need to consider blended polynomials. In any nonzero
evaluation of a polynomial on~$A$, at most $\nu-1$ components can
belong to $J_1$, as defined in Remark~\ref{gluetype1}. But $J_1$,
being a variety, has a finite number $\psi$ of irreducible
components, each of which has a generic element which we can use
for the radical substitution. Our ``generic'' radical substitution
could be taken to be the sum of these substitutions.

This leaves the semisimple substitutions, which we consider
``layered'' around the radical substitutions. The PI-generic rank
of $S$ is less than or equal to its topological rank, which is
finite; cf.~\Rref{infgen11}.

In view of $A = S \oplus J$, we see that  the PI-generic rank is
at most $\mu + \nu-1 $, where $\mu$ is the PI-rank of $S = A/J$.
\end{proof}

By passing to the \Zcr, we have:

\begin{cor}
Any representable algebra $A$ (over an arbitrary field) has a
PI-equivalent algebra with finite PI-generic rank.\end{cor}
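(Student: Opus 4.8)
The plan is to show that the Zariski closure $\cl{A}$ of $A$, formed inside a suitable matrix algebra, is itself the PI-equivalent algebra demanded by the statement, so that the corollary reduces entirely to \Tref{frank}.

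First I would invoke representability: since $A$ is representable, it embeds as an $F$-algebra in $\M[n](K)$ for some extension field $K \supseteq F$ and some $n$. Replacing $K$ by its algebraic closure (exactly as is done without loss of generality at the beginning of \Sref{s:zcr}), I may assume $K$ is algebraically closed, so that the affine-geometric machinery of \Sref{s:zcr} applies to $B = \M[n](K)$. Next I would pass to $\cl{A} = \cl[B]{A}$, the Zariski closure of $A$ inside $B$; by the structure theorem of \Sref{s:zcr}, part (\ref{Sii}), this is an $F$-subalgebra of $B$, hence a \Zcd\ algebra, and by \Lref{samevar} we have $\VarF{A} = \VarF{\cl{A}}$, i.e.\ $A$ and $\cl{A}$ are PI-equivalent. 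Finally, applying \Tref{frank} to the \Zcd\ algebra $\cl{A}$ yields that $\cl{A}$ has finite PI-generic rank, which completes the argument.

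There is essentially no obstacle in this deduction: the whole weight of the result sits in \Tref{frank}, and the corollary is merely the assembly of that theorem with \Lref{samevar}. The one point deserving a line of care is that enlarging $K$ to its algebraic closure changes neither the abstract $F$-algebra $A$ nor its T-ideal of identities, and does not disturb the Zariski-closure construction — but this is precisely the reduction already justified in \Sref{s:zcr}, where $K$ is taken algebraically closed throughout.
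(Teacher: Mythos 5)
Your proposal is correct and matches the paper's own (one-line) argument exactly: the paper's ``By passing to the \Zcr'' is precisely the step you spell out — embed $A$ in $\M[n](K)$ with $K$ algebraically closed, take $\cl{A}$ (which is a \Zcd\ $F$-subalgebra and PI-equivalent to $A$ by \Lref{samevar}), and apply \Tref{frank}.
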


\begin{rem} We can improve the bound given in Theorem \ref{frank}.
First, any central simple algebra over an infinite field is
generated by two elements; thus, its topological rank (and thus
PI-generic rank) is $2$.  Thus, when $F$ is infinite, $\mu = 2,$
so our bound becomes $\nu+1$. (This can be lowered even further,
since the \Zcr\ of a one-generator algebra contains its radical
part.)

Over a finite field $F$, any simple algebra has the form
$\M[n](F)$. If $|F|>n$, then $\M[n](F)$ is generated by two
elements, one being the diagonal with distinct entries and the
other being the upper triangular matrix $\sum _{i=1}^{n-1}
e_{i,i+1}$. On the other hand, when $|F|^2<n,$ the topological
rank starts growing, because of repeating eigenvalues, although
obviously the topological rank is finite (bounded by $n^2$) and
in fact grows much more slowly, bounded by  $2+ \log _{|F|} n,$ as
seen by the argument given above. At any rate, the topological
rank, and thus the PI-rank, of any central simple algebra of
dimension $n^2$ is finite, bounded by some function of $n$.
Arguing by components shows that the PI-generic rank of any
noncommutative semisimple algebra is given along the same lines.

When $F$ is a finite field of $q$ elements, there are only
finitely many possible elements in $\M[n](F)$, namely $q^{n^2}$,
and thus $q^{2n^2}$ possible ordered pairs. If we have more
components in $A$, some pair must repeat itself, and thus the
corresponding components become glued when $\psi \ge q^{n^2}$
unless we have a greater number of generators. Thus the size of
$\psi$ can also force up (logarithmically) the bound for the
PI-generic rank.
\end{rem}

\subsection{Generic representable algebras, not over an infinite
field}

Our main theorem in this section is that for any representable
algebra $A$ there exists a relatively free, finitely generated
algebra in the variety $\operatorname{Var}(A)$ obtained by the
identities of $A$.

 Although, for any representable algebra $A$ over an infinite
field $F$, the classical construction of a generic algebra for $A$
is PI-equivalent to the original algebra ~$A$, this is no longer
the case when $F$ is finite (or even worse, when there is no base
field). Thus, when considering finite characteristic, we need to
introduce new commutative rings that need not be fields.

\begin{exmpl}\Label{generic1.5} Suppose the field $F$ contains $q$
elements. Then $F$ is not PI-equivalent to the ring of polynomials
$F[\xi]$, so we must pass to $F[\xi]/\langle \xi ^q -\xi\rangle$,
where the image $\overline {\xi}$ of $\xi$ is a generic element.
Note that $F[\xi]/\langle \xi ^q -\xi\rangle$ is isomorphic to a
direct product of $q$ copies of $F$, so another way of viewing our
generic element is as a $q$-tuple listing the elements of $F$.
Unfortunately, this may not suffice to describe the PI's since
they involve more than one substitution. For two generic elements
we need to pass to $$F[\xi_1, \xi_2]/\langle \xi_1 ^q -\xi_1 ,
\xi_2 ^q -\xi_2\rangle,$$ which is isomorphic to a direct product
of $q^2$ copies of $F$, and so on. Of course, since the identity
of commutativity only requires two variables, this is enough for
the generic element of the variety of $F$, but we already see the
difficulty arising of predicting how many generic elements we need
to construct the generic algebra.
\end{exmpl}

Nevertheless, there is a way to define the generic algebra for a
general \Zcd\ algebra $A = \cl{A}$ (represented in some
f.d.~algebra $B$ over an algebraically closed field $K$).  The
idea is to define everything as generically as possible.

\begin{constr}[General construction of generic  algebras]\Label{genalgfin}

Letting $\CS_1, \dots, \CS_t$ denote the irreducible components of
$\cl{A}$ under the Zariski topology, suppose each $\CS_i$ is
defined over a field with $q_i$ elements. Then we need $s$
``mutually generic'' elements $b_{i1}, \dots, b_{is}$ in each
component. Towards this end, we take a generic element
$$b \in \CS_1^{s} \times \dots \times \CS_t^{s},$$
where each $\CS_i^{s}$ denotes the direct product of $s$ copies of
$\CS_i$.  Thus $b$ has the form $((b_{11}, \dots, b_{1s}),
(b_{21}, \dots, b_{2s}), \dots, (b_{\mu 1}, \dots, b_{\mu s}))$,
where each $(b_{i1}, \dots, b_{is})\in \CS_i;$ by definition, the
$b_{ik}$ are ``mutually generic''.
 Next, we define the {\bf generic
coefficient ring}
$$C = F[ \xi _{ik}\! : 1 \le i \le s, \ 1 \le k \le \mu ]/\langle
\xi_{ik}^{q^{d_i}}-\xi_{ik}:  1 \le i \le s, \ 1 \le k \le \mu
\rangle,$$ and the generic elements $Y_k = \sum _{i=1}^s \bar
\xi_{ik} b_{ik}$ ($k = 1,\dots,\mu$), where $\bar \xi_{ik}$ is the
image of $\xi_{ik}$ in $C$. The subalgebra of $B$ generated by
the~$Y_k$ serves as our generic algebra for the variety generated
by $A$.

Note that this construction is completely general since the free
algebra of any variety is representable; however, this is difficult
to prove (using the theory developed in this paper).
\end{constr}

\begin{thm}\label{mainthm1} The algebra $\mathcal Y = F\{Y_1, \dots, Y_\mu\}$ of
Construction~\ref{genalgfin} is relatively free in $\VarF A$, with
free generators $Y_1, \dots, Y_\mu$. If $t \ge
\operatorname{PI-rank} A$, then $\VarF {\mathcal Y} = \VarF A$.
\end{thm}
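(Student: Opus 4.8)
The plan is to show that the $F$-algebra epimorphism $\pi\co F\{x_1,\dots,x_\mu\}\to\mathcal Y$ with $x_k\mapsto Y_k$ has kernel exactly $\id(A)\cap F\{x_1,\dots,x_\mu\}$, the set of non-commutative polynomials in $\mu$ variables vanishing under every substitution from $A$; this is the precise content of ``$\mathcal Y$ is relatively free in $\VarF A$ with free generators $Y_1,\dots,Y_\mu$'', and since $\VarF A=\VarF{\cl{A}}$ by \Lref{samevar} we may work with $\cl{A}$ throughout. Two inclusions are needed: $\ker\pi\supseteq\id(A)\cap F\{x_1,\dots,x_\mu\}$ asserts that $(Y_1,\dots,Y_\mu)$ satisfies the $\mu$-variable identities of $A$, which, $\id(A)$ being a $T$-ideal, is the same as $\mathcal Y\in\VarF A$; and $\ker\pi\subseteq\id(A)\cap F\{x_1,\dots,x_\mu\}$ asserts that every tuple $(a_1,\dots,a_\mu)\in A^{\mu}$ is obtained from $(Y_1,\dots,Y_\mu)$ by specializing the generic data.

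The common tool is that $\mathcal Y$ lies in $\M[n](R)$ for a finitely generated \emph{reduced} $K$-algebra $R$: the coordinate rings of the varieties $\CS_i$ are reduced, and each $\xi^{q^{d_i}}-\xi$ is separable, so both the generic coefficient ring $C=F[\xi_{ik}]/\langle\xi_{ik}^{q^{d_i}}-\xi_{ik}\rangle$ and the coordinate ring of the product of copies of the $\CS_i$ of which the $b_{ik}$ form a generic point are reduced. By the Nullstellensatz an element of $\M[n](R)$ is zero iff it vanishes at every $K$-point of $\operatorname{Spec} R$. Such a point evaluates $Y_k=\sum_i\bar\xi_{ik}b_{ik}$ to $\sum_i\beta_{ik}c_{ik}$ with $\beta_{ik}$ a root of $\xi^{q^{d_i}}-\xi$, i.e.\ $\beta_{ik}\in\F_{q^{d_i}}$, and $c_{ik}\in\CS_i\subseteq\cl{A}$. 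The key structural step is that $\sum_i\beta_{ik}c_{ik}$ lies back in $\cl{A}$: each irreducible component $\CS_i$ of the \Zcd\ algebra $\cl{A}$ is contained in a gluing component of $\cl{A}$ which is a module over the finite field $\F_{q^{d_i}}$ it is defined over and is itself inside $\cl{A}$ --- this is exactly the Frobenius-type shape of the polynomial relations of $\cl{A}$ (\Tref{linear}, \Tref{comZar}) together with the Wedderburn block structure of \Sref{sec:4} and the separation of glued components in \Rref{forsepar} --- so $\beta_{ik}c_{ik}\in\cl{A}$, and $\cl{A}$ is closed under addition. Hence, for every $f\in\id(A)=\id(\cl{A})$, the element $f(Y_1,\dots,Y_\mu)$ vanishes at each $K$-point and is therefore $0$ in $\mathcal Y$; this gives the first inclusion and $\mathcal Y\in\VarF A$.

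For the reverse inclusion, fix $a_1,\dots,a_\mu\in A$. Since $A\subseteq\cl{A}=\CS_1\cup\dots\cup\CS_t$, each $a_k$ lies in some $\CS_{i(k)}$; as the $b_{ik}$ form a generic point of a product of copies of the $\CS_i$, their coordinates can be specialized independently, so there is a $K$-point of $\operatorname{Spec} R$ with $b_{i(k),k}\mapsto a_k$ (the remaining $b_{ik}$ sent arbitrarily), $\bar\xi_{i(k),k}\mapsto1$ and $\bar\xi_{ik}\mapsto0$ for $i\neq i(k)$ --- admissible since $0$ and $1$ are roots of each $\xi^{q^{d_i}}-\xi$. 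The corresponding $K$-algebra homomorphism $R\to K$ carries $Y_k$ to $a_k$; extending it entrywise to $\M[n](R)\to\M[n](K)$ and applying it to a relation $f(Y_1,\dots,Y_\mu)=0$ of $\mathcal Y$ gives $f(a_1,\dots,a_\mu)=0$ in $A$. As the $a_k$ were arbitrary, $f\in\id(A)$. Together, the two inclusions show $\pi$ induces the asserted isomorphism, so $\mathcal Y$ is relatively free in $\VarF A$ on the free generators $Y_1,\dots,Y_\mu$.

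Finally, for the last clause: $\VarF{\mathcal Y}\subseteq\VarF A$ is just $\mathcal Y\in\VarF A$, and for the converse we use that at least $\operatorname{PI-rank}(A)$ generic elements have been taken. Choose a subalgebra $A_0\subseteq A$ generated by $\leq\mu$ elements with $\id(A_0)=\id(A)$; since $A_0\in\VarF A$ and $\mathcal Y$ is relatively free on $\mu$ generators, mapping $Y_1,\dots,Y_\mu$ onto a generating set of $A_0$ yields a surjection $\mathcal Y\twoheadrightarrow A_0$, whence $\id(\mathcal Y)\subseteq\id(A_0)=\id(A)$ and $\VarF A\subseteq\VarF{\mathcal Y}$. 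I expect the main obstacle to be the structural step in the second paragraph --- that every $K$-point specialization of the $Y_k$ lands inside $\cl{A}$, i.e.\ that each component $\CS_i$ is absorbed, together with its $\F_{q^{d_i}}$-multiples, into $\cl{A}$ --- where the Frobenius-type analysis of polynomial relations from \Sref{sec:3} (and the block form of \Sref{sec:4}) is indispensable; the remaining steps are formal Vandermonde-style specialization arguments.
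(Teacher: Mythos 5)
Your proof takes essentially the same route as the paper's (which is only two sentences), but fills in the details convincingly. The paper asserts (i) ``any set of mutually generic elements specialize to arbitrary elements of $A$'' and (ii) ``$\mathcal Y$ satisfies the identities of $A$ \dots since any element is the sum of generic components, which satisfy the identities of $A$ by definition.'' Your paragraph (3) is a precise rendering of (i): you exhibit the $K$-point of $\operatorname{Spec}R$ sending $\bar\xi_{i(k),k}\mapsto 1$, other $\bar\xi_{ik}\mapsto 0$, and $b_{i(k),k}\mapsto a_k$, which is exactly the specialization the paper invokes. Your paragraph (2) is a more careful version of (ii): you observe that $\mathcal Y\subset\M[n](R)$ with $R$ a finitely generated reduced $K$-algebra (using separability of $\xi^{q^{d_i}}-\xi$ and reducedness of coordinate rings, plus the fact that over the perfect field $K$ the tensor product of reduced algebras is reduced), and then invoke the Nullstellensatz so that vanishing of $f(Y_1,\dots,Y_\mu)$ can be tested at $K$-points; this is the Vandermonde-type argument the paper has in mind.

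One caveat worth flagging: the crux of (ii), namely that every $K$-point specialization $\sum_i\beta_{ik}c_{ik}$ (with $\beta_{ik}\in\F_{q^{d_i}}$, $c_{ik}\in\CS_i$) lands in $\cl A$, is not really derived from Theorems~\ref{linear}, \ref{comZar} or Remark~\ref{forsepar} as you suggest — the irreducible components $\CS_i$ of $\cl A$ are cosets of the connected component of $0$ and need not be $\F_{q^{d_i}}$-submodules in any obvious sense, and the ``gluing components'' of Section~4 are not the same objects as the irreducible components of the variety $\cl A$. The honest reading is that ``$\CS_i$ is defined over a field with $q_i$ elements'' in Construction~\ref{genalgfin} is exactly the hypothesis making $\F_{q^{d_i}}\cdot\CS_i\subseteq\cl A$, so this step is built into the construction rather than a theorem to be re-proved; the paper treats it the same way (``by definition''). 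With that reading your argument is correct and is a faithful expansion of the paper's proof; your final PI-rank paragraph (choose a $\mu$-generated $A_0$ with $\id(A_0)=\id(A)$ and surject $\mathcal Y\twoheadrightarrow A_0$) is also the standard argument the paper has in mind, noting that the theorem's ``$t$'' is a misprint for ``$\mu$.''
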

\begin{proof}
Any set of mutually generic elements specialize to arbitrary
elements of $A$, so it remains to show that $\mathcal Y$ satisfies
the identities of $A$. But this is clear, since any element is the
sum of generic components, which satisfy the identities of $A$ by
definition.
\end{proof}

This construction also works for nonassociative \Zcd\ algebras of
arbitrary signature, in the framework of universal algebra.

\begin{rem}
One cannot simply describe the generic algebra in Construction~
\ref{genalgfin} by taking the $b_i$ from a base of the extended
algebra as in Construction~\ref{classicgen}, because the
Gel'fand-Kirillov (GK) dimensions do not match, as evidenced by
Belov's computation discussed in Remark~\ref{GZB1}. Indeed, the
generic coefficient ring $C$ is finite, and thus the GK dimension
would only be equal to the GK dimension of the first block,
instead of the sum of the GK dimensions of the blocks.
\end{rem}

\begin{exmpl}
Let $A$ be the algebra of triangular matrices with entries as
follows: $\left(\AR{\a & \beta
\\ 0 & \gamma   }\right)$ where $\a $ is
in the finite field $F$, and $\beta ,\gamma $ are in an infinite
field extension $K$ of $F$. Continuing the notation of
Construction~\ref{genalgfin}, the generic algebra is generated by
matrices of the form $Y_i = \left(\AR{\xi_{i1} & \xi_{i2}
\\ 0 & \xi_{i3}   }\right)$
where $\xi_{i1}$ is a generic element of $C$, whereas
$\xi_{i2},\xi_{i3}$ are indeterminates over $K$.
\end{exmpl}

\begin{exmpl} The generic upper triangular matrices for an algebra with
Frobenius gluing of power $q$ along the diagonal  can be written
in the form $Y_k = \left(\AR{\xi_{1k} &  \xi_{2k}
\\ 0 & \xi_{1k}^q   }\right)$.\end{exmpl}

When Frobenius gluing is involved, we can still use the generic
coordinate ring of Construction~\ref{genalgfin}. The generic
description of partial gluing up to infinitesimals becomes more
complicated in the presence of Frobenius gluing, because we need
to deal both with the Frobenius automorphism and also with the
degree of the infinitesimal.

We close by providing an explicit construction for generic
PI-algebras of \Zcd\ algebras. As in
Construction~\ref{generic1.1}, we take the powers of the radical
$J$ into account.

\begin{constr}[The explicit generic algebra of a
\Zcd\ algebra of finite topological rank]\Label{generic2}

In view of Construction~\ref{classicgen}, we may assume the base
field $F$ is finite. The construction requires generic elements
for the \Zcd\ algebra $A$, which will be defined over the ring of
polynomials $F[\xi_1,\xi_2,\dots]$ (with an appropriate indexing).
There are several methods; we choose the one that is perhaps most
intuitive according to the structure, but rather intricate. Our
point of departure is the Wedderburn Block form
(Theorem~\ref{block1}). Namely, write $A \subset \M[n](K)$, where
$F \sub K$.
\begin{enumerate}
\item First consider the center $A_0$ of a simple component in $A$, such as
$$A_0 = \set{\smat{\alpha}{0}{0}{\alpha^p} \co \alpha \in K}.$$ This
algebra may be described by the Frobenius gluing of $1\times 1$
blocks along a single gluing component. Namely, the algebra has
the form $\set{\sum_{i=1}^{s} \alpha^{\phi_i} e_i \suchthat \alpha
\in K}$, where $e_i$ are the basic idempotents and $\phi$ is the
exponent vector, taking $q$-power values (for $q = \card{F}$).
The generic elements can be taken as $X_k = \sum_{i=1}^{s}
\xi_k^{\phi_i}
e_i$. %
\item Let $S$ be a simple component of $A$. In $B \sub \M[n](K)$,
$S$ is contained in a direct sum of matrix blocks of the same
size. Let $e_i^{jj'}$ denote the $1\times 1$ matrix units in the
\th{i} block, whose corresponding block idempotent is $e_i$.
Keeping the notation as above, the generic elements can be taken
as $X^{k,jj'} = \sum_{i=1}^{s} (\xi_k^{jj'})^{\phi_i} e_i^{jj'}$,
where the variables are $\xi_k^{jj'}$. For example, we could have
$$\hat{X}_{[1]}^{k,21} =
\smat{\smat{0}{0}{\xi_k^{2,1}}{0}}{0}{0}{\smat{0}{0}{(\xi_k^{2,1})^q}{0}}.$$ %
\item Let $d$ denote the number of glued components in $A$, and let
$\hat{e}_u$ denote the idempotent corresponding to the \th{u}
component, decomposed as $\hat{e}_u = \sum_{r \in T_u} e_r$, as in
Subsection~\ref{ss:sP}. Let $\hat{X}_{[u]}^{k,jj'}$ denote the
glued sum of appropriate powers of $\xi_k^{jj'}$, placed in the
$(j,j')$ entry of the glued blocks, where the sum is over the
blocks $r \in T_u$. Each generic element of this type can be
decomposed as a sum $\hat{X}_{[u]}^{k,jj'} = \sum_{r \in T_u}
X_r^{k,jj'}$, where $X_r^{k,jj'}$ is the appropriate power of
$\xi_k^{jj'}$, placed in the $(j,j')$ entry of the \th{r} block.
The $X_{[u]}^{k,jj'}$ are the semi-simple part of our generic
elements.

Let $b_1,\dots,b_\tau$ be a topological basis for $S$. We continue
as in \Rref{abitmore}. For every $2 \leq t \leq \nu$ (where $\nu$
is the nilpotence index of $J$; see \Pref{boundonlen}), for all
indices $1 \leq u_1,\dots,u_t \leq d$, and for every (fully
refined) equivalence class $\gamma \sub T_{u_1,\dots,u_t}$ (see
\Dref{eqdef} and \Rref{gluetype1}), we take all the elements
$$X_{\vec{k}}^{*,\gamma} = \sum_{(r_1,\dots,r_t) \in \gamma} {X_{r_1}^{k_1,j_1j_1'} b_{p_1} X_{r_2}^{k_2,j_2j_2'}
\cdots X_{r_{t-1}}^{k_{t-1},j_{t-1}j_{t-1}'} b_{p_{t-1}}
X_{r_t}^{k_t,j_tj_t'}},$$ where each of $p_1,\dots,p_t$ ranges
over the values $1$ through $\tau$, each $k_i$ ranges over~$\N$;
and each $(j_i,j_i')$ ranges over the matrix entries of the blocks
in the $r_i$ place (they have the same dimensions for each
$(r_1,\dots,r_t) \in \gamma$).
\end{enumerate}
\end{constr}

\begin{thm}\label{mainthm2}
The algebra $\mathcal Y = F\set{X_{\vec{k}}^{*,\gamma} \suchthat
k_1,\dots,k_t \leq \mu}$ of Construction~\ref{generic2} (where we
range over all possible choices of $t$, $p_1,\dots,p_t$ and
$j_1,j_1',\dots,j_t,j_t'$ and $\gamma$) is relatively free in
$\VarF A$, with free generators $\tilde X_{\vec{k}}^{*,\gamma}$.
If $\mu \ge \operatorname{PI-rank} A$ (in particular, if $\mu$ is
at least the number of topological generators of $A$), then $\VarF
{\mathcal Y} = \VarF A$.
\end{thm}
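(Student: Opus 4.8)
The plan is to mimic the proof of Theorem~\ref{mainthm1}, but now tracking the radical filtration so that the finitely many generators $X_{\vec k}^{*,\gamma}$ genuinely generate a relatively free algebra. The two things to prove are: (a) $\mathcal Y$ satisfies every identity of $A$, so that $\VarF{\mathcal Y} \subseteq \VarF A$; and (b) if $\mu \geq \operatorname{PI-rank} A$, then conversely every identity of $\mathcal Y$ is an identity of $A$, so $\VarF{\mathcal Y} = \VarF A$. Freeness of the listed generators in $\VarF A$ will follow from (a) together with the observation that, under the construction, the generic elements admit a specialization onto any prescribed tuple of elements of $A$ realizing a given substitution pattern.

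First I would verify (a). Each $X_r^{k,jj'}$ is, by construction, a Frobenius-glued monomial placed in a single matrix entry of the $r$th block, so by \Rref{abitmore} (in particular formula~\eq{form}) each $X_{\vec k}^{*,\gamma}$ lies in $A^\gamma \subseteq A$ — here one uses \Pref{QQQ} and its length-$t$ refinement to know these components really are in $\cl A = A$. Since $A$ is a variety in the sense of polynomial relations, a generic point of the appropriate product $\CS_1^s\times\cdots\times\CS_t^s$ specializes to every point of that product; hence any $F$-algebra identity $f$ of $A$ evaluated on the $X_{\vec k}^{*,\gamma}$ gives a polynomial relation (in the $\xi$'s, modulo the Frobenius relations $\xi^{q^{d_i}}-\xi$) that vanishes under all admissible specializations, and therefore vanishes identically over the generic coefficient ring. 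This is exactly the argument of Theorem~\ref{mainthm1}, pushed through the finer decomposition; \Pref{break} lets us reduce to substitutions where each variable is either semisimple or in a single $J_1$-component, matching the way the generators were built (semisimple part from the $X_{[u]}^{k,jj'}$, radical levels from the length-$t$ products with the $b_{p_i}$'s).

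For (b), the key point is that $\mu \geq \operatorname{PI-rank} A$ guarantees that the $\mu$ generic elements are "enough" to witness every identity failure of $A$: by the definition of PI-generic rank there is a subalgebra $A_0 \subseteq A$ on $\mu$ generators with $\id(A_0) = \id(A)$, and by Theorem~\ref{frank} (and the bound $\mu \geq \nu-1 + \operatorname{PI-rank}(A/J)$ implicit there) the radical substitutions needed to realize any blended identity of $A$ are captured by the generic elements of the irreducible components of $J_1$ appearing in the length-$t$ products, for $t \leq \nu$. So any non-identity of $A$ is already a non-identity of the subalgebra generated by finitely many such specializations of the $X_{\vec k}^{*,\gamma}$, whence a non-identity of $\mathcal Y$; combined with (a) this gives $\VarF{\mathcal Y} = \VarF A$. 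Freeness: a homomorphism from $\mathcal Y$ to any algebra in $\VarF A$ is determined by the images of the generators, and conversely any assignment of the generators to elements of an algebra in the variety extends, because the defining relations of $\mathcal Y$ (the Frobenius relations on the $\xi$'s and the incidence pattern of the blocks) are consequences of $\id(A)$ — this is where one invokes Construction~\ref{genalgfin} / Theorem~\ref{mainthm1} verbatim for the relatively-free conclusion.

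The main obstacle I expect is bookkeeping in step (a): showing that the finer, fully-refined length-$t$ components $X_{\vec k}^{*,\gamma}$ together span enough of $A$ that every identity of $A$, when expanded via blended monomials and the decomposition $A = S\oplus J$ with $J^\nu = 0$, is detected on these generators — i.e. that no identity failure of $A$ slips through because some needed product of a semisimple and several radical pieces is not of the listed form. This is handled by \Rref{mulc} (the "thick filtration" formula $A^\gamma\cdot A^{\gamma'} = \sum_{\gamma''\in\gamma\circ\gamma'} A^{\gamma''}$) together with \Pref{boundonlen}, which caps the relevant lengths at $\nu$; the argument is essentially a careful induction on the power of $J$ occurring in a monomial, and the only real work is matching indices between the abstract equivalence-class composition and the concrete matrix-unit products.
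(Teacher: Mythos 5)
Your proposal is correct and follows essentially the same route as the paper. In fact the paper's own proof is literally just ``Same as \Tref{mainthm1}'' (itself a two-sentence argument), so your write-up is a fleshed-out version of that same idea: $\mathcal Y$ satisfies the identities of $A$ because each generic component $X_{\vec k}^{*,\gamma}$ lies in $A$ (via \Pref{QQQ} and \Rref{abitmore}), while the hypothesis $\mu\ge\operatorname{PI-rank}A$ ensures that the mutually generic elements specialize onto any tuple of elements of $A$ needed to witness a non-identity, which together with the specialization property of relatively free algebras gives both freeness and equality of varieties.
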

\begin{proof} Same as \Tref{mainthm1}.
\end{proof}

\begin{cor} For any representable algebra $A$, its variety has a
finitely generated relatively free algebra. \end{cor}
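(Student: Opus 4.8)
The plan is to reduce at once to the Zariski-closed case and then assemble the results of the previous section. Since $A$ is representable, it embeds as an $F$-subalgebra of $\M[n](K)$ for some extension field $K \supseteq F$; replacing $K$ by its algebraic closure we may assume $K$ is algebraically closed, so that the Zariski closure $\cl{A}$ of $A$ inside $\M[n](K)$ is defined. By \Lref{samevar}, $A$ and $\cl{A}$ satisfy exactly the same polynomial identities, hence $\VarF{A} = \VarF{\cl{A}}$. Thus it suffices to exhibit a finitely generated relatively free algebra for the variety of the Zariski-closed algebra $\cl{A}$.

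Next I would invoke \Tref{frank}: the Zariski-closed algebra $\cl{A}$ has finite PI-generic rank, say $\mu = \operatorname{PI-rank}(\cl{A}) < \infty$. With this value of $\mu$, apply \Tref{mainthm1} (equivalently \Tref{mainthm2}): the generic algebra $\mathcal Y = F\{Y_1,\dots,Y_\mu\}$ of Construction~\ref{genalgfin} (resp.\ Construction~\ref{generic2}) is relatively free in $\VarF{\cl{A}}$, and since $\mu \ge \operatorname{PI-rank}(\cl{A})$ we get $\VarF{\mathcal Y} = \VarF{\cl{A}} = \VarF{A}$. Hence $\mathcal Y$ is a relatively free algebra, generated by the finitely many elements $Y_1,\dots,Y_\mu$, which lies in the variety generated by $A$ and satisfies precisely the identities of $A$ — the desired conclusion.

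The only point needing a word of care is that the $Y_k$ generate $\mathcal Y$ over the generic coefficient ring $C$ (which is finite when $F$ is finite) rather than over $F$; but this is harmless, since $C$ is finitely generated as an $F$-algebra — it is a quotient of the polynomial ring in the finitely many variables $\xi_{ik}$ ($1 \le i \le s$, $1 \le k \le \mu$) — and $\mathcal Y$ is generated over $C$ by $Y_1,\dots,Y_\mu$, so $\mathcal Y$ is finitely generated as an $F$-algebra. The substantive content, namely finiteness of the PI-generic rank and the relative freeness of the generic algebra, has already been established in \Tref{frank} and \Tref{mainthm1}/\Tref{mainthm2}, so this corollary is essentially a bookkeeping assembly of those results; I expect no real obstacle beyond stating the reduction to $\cl{A}$ cleanly.
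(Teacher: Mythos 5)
Your argument is exactly the paper's: apply Theorem~\ref{frank} (via the reduction $\VarF{A}=\VarF{\cl{A}}$ from Lemma~\ref{samevar}) to get finite PI-generic rank, then invoke Theorem~\ref{mainthm2} (or~\ref{mainthm1}) to produce a finitely generated relatively free algebra in $\VarF{A}$. You spell out the passage to the Zariski closure and the finite generation of $\mathcal Y$ over $F$ more explicitly than the paper's terse proof, but the substance is the same.
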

\begin{proof}
In view of Theorem \ref{frank}, we may assume that $A$ has finite
PI-rank, and thus we are done by Theorem
\ref{mainthm2}.\end{proof}

This construction is needed in a forthcoming paper, where we
describe varieties of algebras in terms of a certain kind of
quiver.

\end{document}